\newcommand{\U}{\mathbb{U}}
\newcommand{\N}{\mathbb{N}}
\newcommand{\M}{\mathbb{M}}
\newcommand{\define}{\stackrel{\rm def}{=}}
\newtheorem{theorem}{Theorem}
\newtheorem{lemma}[theorem]{Lemma}
\newtheorem{proposition}[theorem]{Proposition}
\newtheorem{remark}[theorem]{Remark}
\newtheorem{corollary}[theorem]{Corollary}
\newcounter{standard}
\newcounter{substandard}
\begin{document}
\title[The maximum dimension of a Lie nilpotent subalgebra]{The maximum
dimension of a Lie nilpotent subalgebra of $\boldsymbol{\mathbb{M}_n(F)}$ of
index $\boldsymbol{m}$}
\author{J. Szigeti}
\address{Institute of Mathematics, University of Miskolc, \hfill\break 3515
Miskolc-Egyetemv\'{a}ros, Hungary}
\email{matjeno@uni-miskolc.hu}
\author{J. van den Berg*}
\address{Department of Mathematics and Applied Mathematics, University of
Pretoria, Private Bag X20, Hatfield, Pretoria 0028, South Africa}
\email{john.vandenberg@up.ac.za}
\author{L. van Wyk}
\address{Department of Mathematical Sciences, Stellenbosch University,
\hfill\break Private Bag X1, Matieland~7602, Stellenbosch, South Africa}
\email{lvw@sun.ac.za}
\author{M. Ziembowski}
\address{Faculty of Mathematics and Information Science, \hfill\break
Technical University of Warsaw, 00-661 Warsaw, Poland}
\email{m.ziembowski@mini.pw.edu.pl}
\thanks{*Corresponding author}
\thanks{The first named author was partially supported by the National
Research, Development and Innovation Office of Hungary (NKFIH) K119934.}
\thanks{The second and third authors were supported by the National Research
Foundation of South Africa under Grant Numbers UID 85784 and UID 72375,
respectively. All opinions, findings and conclusions or recommendations
expressed in this publication are those of the authors and therefore the
National Research Foundation does not accept any liability in regard thereto.%
}
\thanks{The fourth author was supported by the Polish National Science
Centre grant UMO-2017/25/B/ST1/00384.}
\thanks{The authors thank K. C. Smith for fruitful discussions on the topic
of this paper.}
\keywords{Lie nilpotent, matrix algebra, Lie algebra, commutative
subalgebra, dimension}
\subjclass[2010]{Primary 16S50, 16U80; secondary 16R40}

\begin{abstract}
The main result of this paper is the following: if $F$ is any field and $R$
any $F$-subalgebra of the algebra $\mathbb{M}_n(F)$ of $n\times n$ matrices
over $F$ with Lie nilpotence index $m$, then 
\begin{equation*}
\mathrm{dim}_{F}R \leqslant M(m+1,n)
\end{equation*}
where $M(m+1,n)$ is the maximum of $\frac{1}{2}\!\left(n^{2}-%
\sum_{i=1}^{m+1}k_{i}^{2}\right)+1$ subject to the constraint $%
\sum_{i=1}^{m+1}k_{i}=n$ and $k_{1},k_{2},\ldots,k_{m+1}$ nonnegative
integers. This answers in the affirmative a conjecture by the first and
third authors. The case $m=1$ reduces to a classical theorem of Schur
(1905), later generalized by Jacobson (1944) to all fields, which asserts
that if $F$ is an algebraically closed field of characteristic zero, and $R$
is any commutative $F$-subalgebra of $\mathbb{M}_{n}(F)$, then $\mathrm{dim}%
_{F}R \leqslant \left\lfloor\frac{n^{2}}{4}\right\rfloor+1$. Examples
constructed from block upper triangular matrices show that the upper bound
of $M(m+1,n)$ cannot be lowered for any choice of $m$ and $n$. An explicit
formula for $M(m+1,n)$ is also derived.
\end{abstract}

\maketitle


\section*{Contents}

\begin{list}{{\arabic{standard}.}\hfill}{\usecounter{standard}\addtocounter{standard}{0}\setlength{\topsep}{1ex}\setlength{\labelwidth}{1.5\parindent}\setlength{\labelsep}{0.2\parindent}\setlength{\leftmargin}{2.3\parindent}\setlength{\listparindent}{0pt}\setlength{\itemsep}{5pt}}

\item Introduction

\item Preliminaries

\item The passage to local algebras over an algebraically closed field

\item Simultaneous triangularization and the passage to upper triangular matrix algebras

\item Subalgebras of $\U_{n}^{\ast}(F)$

\item Lie nilpotent subalgebras of $\U_{n}^{\ast}(F)$: the main theorem

\item The function $M(\ell,n)$

\item An illustrative example

\item Open questions

\end{list}


\section{Introduction}

In 1905 Schur \cite[Satz I, p.~67]{Sc05} proved that the dimension over the
field of complex numbers $\mathbb{C}$ of any commutative subalgebra of $%
\mathbb{M}_{n}(\mathbb{C})$ is at most $\left\lfloor\frac{n^{2}}{4}%
\right\rfloor+1$, where $\lfloor\hspace*{0.3em}\rfloor$ denotes the integer
floor function. Some forty years later, Jacobson \cite[Theorems 1 and 2,
p.~434]{Ja44} extended Schur's result by showing that the upper bound holds
for commutative subalgebras of $\mathbb{M}_{n}(F)$ for all fields $F$.

In a subsequent further improvement, Gustafson \cite[Section 2, p.~558]{Gu76}
showed that Schur's theorem in its most general form could be proved with
much greater efficiency using module theoretic methods. We record here that
Gustafson's elegant arguments are the inspiration for a key proposition in
this paper.

There have also appeared in the literature a number of papers offering
alternative proofs of Schur's theorem and its subsequent extensions. In this
regard, we refer the reader to \cite{ZG64}, \cite{Mi98} and \cite{Ka10}.

In response to a question posed in \cite[Section 5, Open problem (a), p.~562]%
{Gu76} Cowsik \cite{Co93} has proved a version of Schur's theorem for
artinian rings that are not algebras, in which the module length of a
faithful module substitutes for the dimension of the $F$-space on which the
matrices act.

The common approach to establishing Schur's upper bound has been to show
that if $F$ is a field and $R$ a commutative $F$-subalgebra of $\mathbb{M}%
_{n}(F)$, then there exist positive integers $k_{1}$ and $k_{2}$ such that $%
k_{1}+k_{2}=n$ and

\begin{equation*}
\mathrm{dim}_{F}R \leqslant k_{1}k_{2}+1.
\end{equation*}

\noindent An application of rudimentary Calculus then shows that

\begin{center}
$\mathrm{max}\hspace*{0.1em}\{k_{1}k_{2}+1:(k_{1},k_{2})\in\mathbb{N}\times%
\mathbb{N}$ and $k_{1}+k_{2}=n\}=\left\lfloor\frac{n^{2}}{4}\right\rfloor+1$,
\end{center}

\noindent whence $\mathrm{dim}_{F}R \leqslant \left\lfloor\frac{n^{2}}{4}%
\right\rfloor+1$.

The upper bound of \raisebox{0pt}[1.3\baselineskip][0.7\baselineskip]{}$%
\left\lfloor\frac{n^{2}}{4}\right\rfloor+1$ is, moreover, easily seen to be
optimal. Indeed, let $F$ be any field and $(k_{1},k_{2})$ any pair of
positive integers satisfying $k_{1}+k_{2}=n$. Define rectangular array $B$ by

\begin{equation*}
B\,\overset{\mathrm{def}}{=}\{(i,j)\in\mathbb{N}\times\mathbb{N}:1\leqslant
i\leqslant k_{1}< j\leqslant n\},
\end{equation*}

\noindent and subset $J$ of $\mathbb{M}_{n}(F)$ by

\begin{align}
J\,\overset{\mathrm{def}}{=}\left\{ 
\raisebox{0.7ex}[3.2ex][0ex]{
$\displaystyle\sum\limits_{(i,j)\in B}b_{ij}E_{(i,j)}\::\:b_{ij}\in F \hspace*{0.6em}\forall(i,j)\in B$}%
\right\},
\end{align}

\noindent where $E_{(i,j)}$ denotes the matrix unit in $\mathbb{M}_{n}(F)$
associated with position~$(i,j)$. Observe that $J$ comprises the set of all
block upper triangular matrices that correspond with $B$; it has the
following illuminating pictorial representation (the unshaded region in the
picture below corresponds with zero entries):

\vspace*{1\baselineskip}

\begin{center}
\begin{tikzpicture}
\draw {[line width=0.1pt](0,0) rectangle (5,5)};
\draw [dotted](0,5)--(5,0);
\draw [fill=gray!30][line width=0.1pt](5,5) rectangle (2,3);

\node [left] at (0,2.5) {$J\,=\,$};

\draw [decorate,decoration={brace,amplitude=5pt,mirror,raise=4pt},yshift=0pt]
(5,3.05) -- (5,4.95) node [black,midway,xshift=0.6cm]{\footnotesize $k_{1}$};

\draw [decorate,decoration={brace,amplitude=5pt,mirror,raise=4pt},yshift=0pt]
(5,0.05) -- (5,2.95) node [black,midway,xshift=0.6cm]{\footnotesize $k_{2}$};

\end{tikzpicture}
\end{center}

\vspace*{1\baselineskip}

\noindent Denote by

\begin{equation*}
FI_{n}\,\overset{\mathrm{def}}{=}\{aI_{n}:a\in F\} \,%
\makebox{ {\rm ($I_{n}$
is the $n\times n$ identity matrix)}}
\end{equation*}

\noindent the set of all $n\times n$ scalar matrices over $F$, and define

\begin{align}
R\,\overset{\mathrm{def}}{=} FI_{n}+J.
\end{align}

\noindent It is easily seen that $R$ is a local $F$-subalgebra of $\mathbb{M}%
_{n}(F)$ with (Jacobson) radical $J(R)=J$ such that $J^{2}=0$. This entails $%
R$ is commutative. It is clear too, that

\begin{equation*}
\mathrm{dim}_{F}R=k_{1}k_{2}+1.
\end{equation*}

\noindent The above simple construction shows that the upper bound $%
\left\lfloor\frac{n^{2}}{4}\right\rfloor+1=\mathrm{max}\hspace*{0.1em}%
\{k_{1}k_{2}+1:(k_{1},k_{2})\in\mathbb{N}\times\mathbb{N}$ and $%
k_{1}+k_{2}=n\}$ cannot be lowered for any $n\geqslant 2$, and is thus
optimal, as claimed.

We construct now an $F$-subalgebra $R$ of $\mathbb{M}_{n}(F)$ similar to the
one constructed above, but whose radical $J$ comprises $m$ blocks rather
than a single block. We require first a compact notation for the description
of such rings. To this end, let $k_{1},k_{2},\ldots,k_{m+1}$ be a sequence
of positive integers such that $k_{1}+k_{2}+\cdots+k_{m+1}=n$. For each $%
p\in\{1,2,\ldots,m\}$, define rectangular array

\begin{equation*}
B_{p}\,\overset{\mathrm{def}}{=} 
\begin{cases}
\{(i,j)\in\mathbb{N}\times\mathbb{N}:1\leqslant i\leqslant k_{1}<j\leqslant
n\},\makebox{ {\rm if} }p=1, \\[2ex] 
\{(i,j)\in\mathbb{N}\times\mathbb{N}:%
\parbox[t]{20em}{$k_{1}+k_{2}+\cdots+k_{p-1}<i\leqslant k_{1}+k_{2}+\cdots+k_{p}$\\[0.2\baselineskip]
$<j\leqslant n\}$, if $p>1$.}%
\end{cases}%
\end{equation*}

\noindent Put

\begin{align}
B\,\overset{\mathrm{def}}{=}\bigcup\limits_{p=1}^{m}B_{p}.
\end{align}

\noindent Define $J$ as in (1) but with $B$ defined as in (3) above. The
following pictorial representation of $J$ reveals a stack of $m$ blocks

\vspace*{1\baselineskip}

\begin{center}
\begin{tikzpicture}

\draw [dotted](0,5)--(5,0);

\draw {[line width=0.1pt](0,0) rectangle (5,5)};

\draw [fill=gray!30][line width=0.1pt](5,5) rectangle (1,4);
\draw [fill=gray!30][line width=0.1pt](5,4) rectangle (1.7,3.3);
\draw [fill=gray!30][line width=0.1pt](5,1.8) rectangle (4.3,0.7);

\node [right] at (4.5,2.05) {$\cdot$};
\node [right] at (4.5,2.25) {$\cdot$};
\node [right] at (4.5,2.45) {$\cdot$};
\node [right] at (4.5,2.65) {$\cdot$};
\node [right] at (4.5,2.85) {$\cdot$};
\node [right] at (4.5,3.05) {$\cdot$};

\node [right] at (5.32,1.65) {$\cdot$};
\node [right] at (5.32,1.85) {$\cdot$};
\node [right] at (5.32,2.05) {$\cdot$};
\node [right] at (5.32,2.25) {$\cdot$};
\node [right] at (5.32,2.45) {$\cdot$};
\node [right] at (5.32,2.65) {$\cdot$};
\node [right] at (5.32,2.85) {$\cdot$};
\node [right] at (5.32,3.05) {$\cdot$};
\node [right] at (5.32,3.25) {$\cdot$};

\node [left] at (0,2.5) {$J\,=\,$};

\draw [decorate,decoration={brace,amplitude=3pt,mirror,raise=4pt},yshift=0pt]
(5,4.05) -- (5,4.95) node [black,midway,xshift=0.6cm]{\footnotesize $k_{1}$};

\draw [decorate,decoration={brace,amplitude=2.5pt,mirror,raise=4pt},yshift=0pt]
(5,3.35) -- (5,3.95) node [black,midway,xshift=0.6cm]{\footnotesize $k_{2}$};

\draw [decorate,decoration={brace,amplitude=3.1pt,mirror,raise=4pt},yshift=0pt]
(5,0.75) -- (5,1.75) node [black,midway,xshift=0.65cm]{\footnotesize $k_{m}$};

\draw [decorate,decoration={brace,amplitude=2.5pt,mirror,raise=4pt},yshift=0pt]
(5,0.05) -- (5,0.65) node [black,midway,xshift=0.8cm]{\footnotesize $k_{m+1}$};

\end{tikzpicture}
\end{center}

\vspace*{1\baselineskip}

\noindent We shall call the $F$-algebra $R$ defined as in (2), the \emph{%
algebra of $n\times n$ matrices over $F$ of type $(k_{1},k_{2},%
\ldots,k_{m+1})$.} We see that $R$ is again a local $F$-subalgebra of $%
\mathbb{M}_{n}(F)$ with radical $J(R)=J$ such that $J^{m+1}=0$ and

\begin{align}
\mathrm{dim}_{F}R &=k_{1}(n-k_{1})+k_{2}(n-k_{1}-k_{2})+\cdots  \notag \\
& \hspace*{1.1em}+k_{m}(n-k_{1}-k_{2}-\cdots -k_{m})+1.  \notag \\
&= \sum_{j=1}^{m}k_{j}\left(n-\sum_{i=1}^{j}k_{i}\right)+1.
\end{align}

\noindent A routine inductive argument shows that the expression (less $1$)
appearing on the right-hand-side of (4), simplifies as

\begin{equation*}
\sum_{j=1}^{m}k_{j}\left(n-\sum_{i=1}^{j}k_{i}\right)=\frac{1}{2}%
\left(n^{2}-\sum_{i=1}^{m+1}k_{i}^{2}\right)=\sum_{i,j=1,%
\,i<j}^{m+1}k_{i}k_{j},
\end{equation*}

\noindent so that (4) becomes

\begin{align}
\mathrm{dim}_{F}R=\frac{1}{2}\left(n^{2}-\sum_{i=1}^{m+1}k_{i}^{2}\right)+1=%
\sum_{i,j=1,\,i<j}^{m+1}k_{i}k_{j}+1.
\end{align}

The algebra of $n\times n$ matrices over $F$ of type $(k_{1},k_{2},%
\ldots,k_{m+1})$ is clearly not commutative (unless $m=1$), but it does
satisfy a weak form of commutativity called Lie nilpotence. To put this
notion in context, we first recall some basic facts about Lie algebras.

Let $\mathfrak{g}$ be a Lie algebra\footnote{%
Our Lie algebras are over a commutative ring that is not necessarily a
field. No harm shall come of this more general interpretation since, in the
few instances where results about standard Lie algebras are used, the
underlying commutative ring is a field.} and $x_{1},x_{2},\ldots,x_{m}$ a
finite sequence of elements in $\mathfrak{g}$. We define element $%
[x_{1},x_{2},\ldots,x_{m}]^{\ast}$ of $\mathfrak{g}$ recursively as follows

\begin{align*}
[x_{1}]^{\ast} &\overset{\mathrm{def}}{=} x_{1},\hspace*{0.3em}%
\makebox{\rm
and} \\[0.5ex]
[x_{1},x_{2},\ldots,x_{m}]^{\ast} &\overset{\mathrm{def}}{=}
[[x_{1},x_{2},\ldots,x_{m-1}]^{\ast},x_{m}],\hspace*{0.3em}%
\makebox{\rm for
$m>1$.}
\end{align*}

\noindent Recall that if ${\mathfrak{h}}$ is any ideal of ${\mathfrak{g}}$,
then the \emph{Lower Central Series} $\{{\mathfrak{h}}_{[m]}\}_{m\in\mathbb{N%
}}$ of ${\mathfrak{h}}$ is defined by

\begin{equation*}
{\mathfrak{h}}_{[m]}\overset{\mathrm{def}}{=}\{[x_{1},x_{2},\ldots,x_{m}]^{%
\ast}: x_{i}\in{\mathfrak{h}} \,%
\makebox{ {\rm for $1\leqslant i\leqslant
m$}}\}.
\end{equation*}

\noindent We say ${\mathfrak{g}}$ is \emph{nilpotent} if ${\mathfrak{g}}%
_{[m]}=0$ for some $m\in\mathbb{N}$, $m>1$, and more specifically, \emph{%
nilpotent of index $m$}, if ${\mathfrak{g}}_{[m+1]}=0$.

Every ring $R$ may be endowed with the structure of a Lie algebra (over the
centre of $R$), by choosing as bracket the \emph{commutator} defined by

\begin{equation*}
\forall r,s\in R, \,[r,s]\,\overset{\mathrm{def}}{=} rs-sr.
\end{equation*}

\noindent Following \cite[p. 4785]{SvW15}, we call a ring $R$ \emph{Lie
nilpotent} [resp.~\emph{Lie nilpotent of index $m$}] if $R$, considered as a
Lie algebra via the commutator, is nilpotent [resp.~nilpotent of index $m$].
The reader will observe that the commutative rings are precisely the rings
that are Lie nilpotent of index $1$.

A ring $R$ is said to satisfy the \emph{Engel condition of index $m$} if the
identity

\begin{equation*}
[x,\overbrace{y,\ldots,y}^{m\mathrm{\ times}}]^{\ast}=0,
\end{equation*}

\noindent holds in $R$. A ring is said to satisfy the \emph{Engel condition}
if it satisfies the Engel condition of index $m$ for some $m\in{\mathbb{N}}$%
. Clearly a ring that is Lie nilpotent of index $m$ satisfies the Engel
condition of index $m$. The following result of Riley and Wilson \cite[p. 974%
]{RW99} establishes a partial converse.


\begin{proposition}
If $F$ is any field and $R$ an $F$-algebra that is generated by a finite
number $d$ of elements, and $R$ satisfies the Engel condition of index $m$,
then $R$ is Lie nilpotent of index $f(d,m)\geqslant m$, where the index $%
f(d,m)$ depends only on $d$ and $m$.
\end{proposition}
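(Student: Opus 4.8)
The plan is to reduce the statement to a theorem in the theory of PI-algebras by recognizing that the Engel condition of index $m$ is a polynomial identity on $R$, and then invoke the structure theory for finitely generated PI-algebras together with results linking the Engel condition to Lie nilpotence in the relatively-free setting. First I would observe that since $R$ is generated by $d$ elements over $F$ and satisfies the Engel identity $[x,y,\ldots,y]^\ast=0$ (with $m$ copies of $y$), the relatively free algebra $F\langle x_1,\ldots,x_d\rangle/\mathrm{id}(R)$ in $d$ variables surjects onto $R$; hence it suffices to prove the conclusion for the $d$-generated relatively free algebra $A$ of the variety defined by the Engel identity of index $m$, i.e. to show $A$ is Lie nilpotent of some index $f(d,m)$ depending only on $d$ and $m$. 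If I can do this, then the Lie nilpotence index of $R$, being a homomorphic image, is at most $f(d,m)$.

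Next I would bring in the two pillars that make this work. The first is Zelmanov's solution of the restricted Burnside problem, or more precisely its Lie-theoretic heart: a finitely generated Lie algebra over a field satisfying an Engel identity is nilpotent. The second ingredient handles the passage from the associative Engel condition to a Lie-algebraic one and controls the growth: by a theorem in the Riley--Wilson circle of ideas (building on Kemer's theory of $T$-ideals and the representability of finitely generated PI-algebras), a finitely generated associative PI-algebra satisfying the Engel condition has its associated Lie algebra $A^{(-)}$, or rather the Lie subalgebra generated by the $d$ generators, nilpotent. The key point is that because we are in the relatively free algebra of a fixed variety with a fixed number $d$ of generators, the nilpotence index is an invariant of $(d,m)$ alone — there is no dependence on the particular algebra $R$. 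Concretely, one shows the $T$-ideal generated by the Engel identity of index $m$ contains the Lie-nilpotence identity $[x_1,\ldots,x_{f(d,m)+1}]^\ast=0$ when restricted to the $d$-generated relatively free algebra, and this last containment, once known to hold, holds uniformly.

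The main obstacle, as I see it, is establishing that the nilpotence index can be bounded by a function of $d$ and $m$ only, rather than merely proving that each such $R$ is Lie nilpotent of \emph{some} index. Pointwise nilpotence of the Lie algebra generated by the generators follows fairly directly from Zelmanov-type results, but uniformity requires a compactness or Noetherianity argument: one considers the $d$-generated relatively free algebra $A$ itself, notes that $A$ satisfies the Engel identity of index $m$, and applies the pointwise result to $A$ to get that $A$ (equivalently, the Lie algebra it generates) is Lie nilpotent of index $f$ for some specific $f$; since every $d$-generated Engel-of-index-$m$ algebra $R$ is a quotient of $A$, the same $f$ works for all of them. So the real content is that $A$ — an honest, single algebra determined by $d$ and $m$ — is Lie nilpotent, and for that one needs the deep representability/Zelmanov input; I would cite Riley and Wilson directly for this step rather than reprove it, since the excerpt permits assuming their result.

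I expect the argument to be short in the write-up precisely because the heavy lifting is outsourced: the proof is essentially ``pass to the $d$-generated relatively free algebra of the variety defined by the Engel identity of index $m$, apply the cited theorem that finitely generated Engel PI-algebras are Lie nilpotent, read off the index as $f(d,m)$, and note it descends to every quotient.'' The inequality $f(d,m)\geqslant m$ is immediate since Lie nilpotence of index $m$ implies the Engel condition of index $m$, and one cannot in general get a smaller index back.
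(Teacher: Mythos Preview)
The paper does not prove this proposition: it is quoted as a result of Riley and Wilson \cite[p.~974]{RW99} and stated without proof, so there is no argument in the paper against which to compare your sketch.

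That said, a remark on your outline. The reduction to the $d$-generated relatively free algebra $A$ of the Engel-$m$ variety is the right way to convert a pointwise nilpotence statement into a uniform bound: once $A$ itself is shown to be Lie nilpotent of some index $N$, every $d$-generated Engel-$m$ algebra over the same base field, being a quotient of $A$, inherits Lie nilpotence of index at most $N$. The one gap to flag is that $A$ depends on the ground field $F$, so your argument as written yields a bound $N=N(F,d,m)$; the proposition asserts that $f(d,m)$ is independent of $F$. To close this you would need either to work over the prime subfield (or over $\mathbb{Z}$) and argue that extending scalars does not increase the Lie nilpotence index, or to invoke the stronger uniformity already present in the Riley--Wilson result. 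Since you ultimately cite Riley--Wilson for the hard step anyway, and that \emph{is} the proposition, your sketch is less an alternative proof than an explanation of why uniformity in $d$ and $m$ (for fixed $F$) is formal once the pointwise Zelmanov-type input is granted.
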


Lie nilpotent rings have been shown to play an important role in the proofs
of certain classical results about polynomial and trace identities in the $F$%
-algebra $\mathbb{M}_{n}(F)$ (see \cite{Dr00} and \cite{DrFu04}). For fields 
$F$ of characteristic zero, Kemer's \cite{Ke91} pioneering work on the
T-ideals of associative algebras has revealed the importance of identities
satisfied by $n\times n$ matrices over the Grassmann (exterior) algebra $%
E=F\langle\{x_{i}:i\in{\mathbb{N}}\}:x_{i}x_{j}+x_{j}x_{i}=0$ whenever $%
1\leqslant i\leqslant j\rangle$ generated by an infinite family $\{x_{i}:i\in%
{\mathbb{N}}\}$ of anticommutative indeterminates. For $n\times n$ matrices
over a Lie nilpotent ring of index $m$, a Cayley-Hamilton identity of degree 
$n^{m}$ (with left- or right-sided scalar coefficients) was found in \cite%
{Sz97}. Since the Grassmann algebra $E$ is Lie nilpotent of index $m=2$, the
aforementioned Cayley-Hamilton identity for matrices in $\mathbb{M}_{n}(E)$
is of degree $n^{2}$. In \cite{Do98}, Domokos presents a slightly modified
version of this identity in which the coefficients are invariant under the
conjugation action of $\mathrm{GL}_{n}(F)$.

This paper is an attempt to answer a conjecture posed in~\cite[p.~4785]%
{SvW15}. The statement of this conjecture is rendered less cumbersome if
expressed in terms of a function $M(\ell,n)$ of positive integer arguments $%
\ell$ and $n$, defined as follows

\begin{align}
M(\ell,n)\,\overset{\mathrm{def}}{=} & \hspace*{0.4em}\mathrm{max}\left\{%
\frac{1}{2}\left(n^{2}-\sum_{i=1}^{\ell}k_{i}^{2}\right)+1:k_{1},k_{2},%
\ldots,k_{\ell} \makebox{ {\rm are}}\right.  \notag \\
& \left.\,\makebox{{\rm nonnegative integers such that} }\sum_{i=1}^{%
\ell}k_{i}=n\right\}.
\end{align}

\begin{quote}
\textbf{Conjecture.} \emph{Let $F$ be any field, $m$ and $n$ positive
integers, and $R$ an $F$-subalgebra of $\mathbb{M}_{n}(F)$ with Lie
nilpotence index $m$. Then }
\end{quote}

\emph{%
\begin{align}
\mathrm{dim}_{F}R \leqslant M(m+1,n).
\end{align}
}

\medskip

\noindent We shall henceforth refer to the above as \emph{`the Conjecture'}.
More specifically, if $F$ is any fixed field, we shall say that \emph{`the
Conjecture holds in respect of $F$'}, if (7) holds for all positive integers 
$m$ and $n$, and $F$-subalgebras $R$ of $\mathbb{M}_{n}(F)$ with Lie
nilpotence index $m$.

If $R$ is any algebra over a field $F$, then a module $V$ over $R$ is
precisely a representation of $R$ via action on the underlying $F$-space
structure on $V$. If the module is faithful, then this representation is
faithful thus yielding an embedding of $R$ into $\mathrm{End}_{F}V$, the $F$%
-algebra of $F$-space endomorphisms on $V$. If $V$ is also finite
dimensional over $F$, say $\mathrm{dim}_{F}V=n$, then $\mathrm{End}_{F}V$ is
isomorphic to $\mathbb{M}_{n}(F)$ and so we have an $F$-algebra embedding of 
$R$ into $\mathbb{M}_{n}(F)$. (We point out that such a finite dimensional $%
V $ is certain to exist if $R$ is finite dimensional, for $V$ can always be
chosen to be $R$ itself.) Thus, seen through a representation theoretic
lens, inequality (7) sheds light on a possible \emph{lower} bound for the
dimension of a faithful module over a given Lie nilpotent algebra.

In the same spirit, Domokos \cite[Theorem 1, p. 156]{Do03} derives a lower
bound for the dimension of a faithful module over a finite dimension algebra
satisfying the polynomial identity $[x_{1},y_{1}][x_{2},y_{2}]\ldots[%
x_{m},y_{m}]=0$, in terms of $m$.

Our initial task, which is easily accomplished, shall be to argue that the
upper bound (7) is optimal for all choices of $m$ and $n$.

Suppose first that $m+1\leqslant n$. It is proven in Corollary 27(a) that
for such $m$ and $n$, $M(m+1,n)=\frac{1}{2}\left(n^{2}-%
\sum_{i=1}^{m+1}k_{i}^{2}\right)+1$ for some sequence of \emph{positive}
integers $k_{1},k_{2},\ldots,k_{m+1}$ satisfying $\sum_{i=1}^{m+1}k_{i}=n$.
Let $F$ be any field and $R$ the algebra of $n\times n$ matrices over $F$ of
type $(k_{1},k_{2},\ldots,k_{m+1})$. As noted earlier, $R$ has the form $%
R=FI_{n}+J$ with radical $J$ satisfying $J^{m+1}=0$. Since the set $FI_{n}$
of scalar matrices is central in $R$, it can be shown that the $k$th terms
of the Lower Central Series for $R$ (interpreted as a Lie algebra via the
commutator) and $J$ coincide, that is to say, $R_{[k]}=J_{[k]}$, for $k>1$.
It is also evident that $J_{[k]}\subseteq J^{k}$ for every $k\in\mathbb{N}$.
Thus $R_{[m+1]}=J_{[m+1]}\subseteq J^{m+1}=0$, so $R$ is Lie nilpotent of
index $m$. It follows from (5) that $\mathrm{dim}_{F}R=\frac{1}{2}\left[%
n^{2}-\sum_{i=1}^{m+1}k_{i}^{2}\right]+1=M(m+1,n)$.

Now suppose $m+1>n$. No generality is lost if we suppose $n>1$. It is proven
in Corollary 27(b) that for such $m$ and $n$, $M(m+1,n)=M(n,n)=\frac{1}{2}%
(n^{2}-n)+1$, and this, by (5), is equal to $\mathrm{dim}_{F}R$ where $R$ is
the algebra of $n\times n$ matrices over field $F$ of type $%
(k_{1},k_{2},\ldots,k_{n})$ with $k_{1}=k_{2}=\cdots=k_{n}=1$. (The reader
will see that in this instance, $R$ is just the algebra of all upper
triangular matrices over $F$ with constant main diagonal.) As shown in the
previous paragraph, such an algebra $R$ is Lie nilpotent of index $n-1$ and
thus Lie nilpotent of index $m$, since $m\geqslant n-1$.

The theorem below collects together the conclusions drawn above.


\begin{theorem}
Let $F$ be any field, and $m$ and $n$ arbitrary positive integers. Then
there exists an $F$-subalgebra $R$ of $\mathbb{M}_{n}(F)$ with Lie
nilpotence index $m$ such that

\begin{equation*}
\mathrm{dim}_{F}R=M(m+1,n).
\end{equation*}
\end{theorem}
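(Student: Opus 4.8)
The plan is to exhibit, for each pair $(m,n)$, an explicit $F$-subalgebra $R$ of $\M_n(F)$ of the form $R=FI_n+J$ that is Lie nilpotent of index exactly $m$ and has $\dim_F R=M(m+1,n)$. The construction is precisely the algebra of $n\times n$ matrices of type $(k_1,k_2,\ldots,k_{m+1})$ introduced in the Introduction; the only thing left to do is to choose the $k_i$ optimally and to verify the three required properties (correct dimension, Lie nilpotence index exactly $m$, and that the $k_i$ can be taken so that the bound in (6) is actually attained). I would split the argument into the two cases $m+1\leqslant n$ and $m+1>n$, exactly as flagged in the text preceding the theorem.

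First I would treat the case $m+1\leqslant n$. By Corollary 27(a) the maximum defining $M(m+1,n)$ in (6) is attained at a sequence of \emph{positive} integers $k_1,\ldots,k_{m+1}$ with $\sum_{i=1}^{m+1}k_i=n$, so fix such a sequence and let $R$ be the algebra of $n\times n$ matrices over $F$ of type $(k_1,\ldots,k_{m+1})$, with radical $J$ as in (1)--(3). From (5) we get $\dim_F R=\tfrac12\bigl(n^2-\sum_{i=1}^{m+1}k_i^2\bigr)+1=M(m+1,n)$ immediately. It remains to check that $R$ is Lie nilpotent of index $m$. Since $FI_n$ is central in $R$, the lower central series terms satisfy $R_{[k]}=J_{[k]}$ for all $k>1$, and since $J$ is an associative ideal with $J_{[k]}\subseteq J^k$, one has $R_{[m+1]}=J_{[m+1]}\subseteq J^{m+1}=0$, so the Lie nilpotence index is at most $m$. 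For exactness I would exhibit a nonzero iterated commutator of length $m$: taking suitable matrix units $E_{(i,j)}$ from consecutive blocks $B_1,B_2,\ldots,B_m$ and forming $[E_1,E_2,\ldots,E_m]^{\ast}$ (where $E_p\in FE_{(a_{p-1},a_p)}$ runs down the staircase $a_0<a_1<\cdots<a_m$ with $a_p=k_1+\cdots+k_p$) gives, up to sign, a nonzero matrix unit $E_{(a_0,a_m)}$, so $R_{[m]}=J_{[m]}\neq 0$. Hence the index is exactly $m$.

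For the case $m+1>n$ (and we may assume $n>1$, the case $n=1$ being trivial since $M(\ell,1)=1$ and $R=F$), Corollary 27(b) gives $M(m+1,n)=M(n,n)=\tfrac12(n^2-n)+1$, attained by $k_1=\cdots=k_n=1$. The corresponding algebra of type $(1,1,\ldots,1)$ is just the algebra $\U$ of all upper triangular $n\times n$ matrices over $F$ with constant main diagonal; by the argument of the previous paragraph it is Lie nilpotent of index exactly $n-1$. Since $m\geqslant n-1$, an algebra that is Lie nilpotent of index $n-1$ is \emph{a fortiori} Lie nilpotent of index $m$ (the lower central series is decreasing, so $R_{[m+1]}\subseteq R_{[n]}=0$), and $\dim_F R=\tfrac12(n^2-n)+1=M(m+1,n)$ by (5). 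Combining the two cases yields the theorem.

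The routine verifications are the dimension count (already done in (4)--(5)) and the containment $R_{[k]}=J_{[k]}\subseteq J^k$. The one point requiring a little care --- and the closest thing to an obstacle --- is pinning down the Lie nilpotence index \emph{exactly}: upper-bounding it via $J^{m+1}=0$ is immediate, but to rule out a smaller index one must produce the explicit nonzero staircase commutator described above, and in the second case one must be comfortable with the (elementary) monotonicity remark that Lie nilpotence of index $n-1$ implies Lie nilpotence of index $m$ for every $m\geqslant n-1$, so that the hypothesis ``Lie nilpotence index $m$'' in the theorem is met. Everything else is a direct appeal to Corollary 27 and to formula (5).
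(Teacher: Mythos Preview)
Your proposal is correct and follows essentially the same approach as the paper: split into the cases $m+1\leqslant n$ and $m+1>n$, invoke Corollary~27 to locate an optimal $(k_1,\ldots,k_{m+1})$, and take $R$ to be the block upper triangular algebra of that type, reading off the dimension from (5) and the Lie nilpotence from $R_{[k]}=J_{[k]}\subseteq J^k$.

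One small remark: under the paper's definition, ``Lie nilpotent of index $m$'' means only $R_{[m+1]}=0$, not that $m$ is minimal with this property (see the sentence ``such an algebra $R$ is Lie nilpotent of index $n-1$ and thus Lie nilpotent of index $m$, since $m\geqslant n-1$'' just before the theorem). So your staircase-commutator argument showing $R_{[m]}\neq 0$ is not needed; the paper establishes the theorem without it. Your argument is nonetheless correct in spirit---only the indexing $a_0=k_1+\cdots+k_0=0$ needs a cosmetic fix (take, e.g., $a_p=k_1+\cdots+k_p$ for $p\geqslant 1$ and $a_0=1$, so that $E_p=E_{(a_{p-1},a_p)}$ lies in $J$ and the only nonvanishing product $E_{\sigma(1)}\cdots E_{\sigma(m)}$ is the one with $\sigma=\mathrm{id}$).
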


The main body of theory in this paper is developed in Sections 5 and~6 with
module theoretic methods our primary tools. Sections 3 and~4 show that the
Conjecture reduces to a consideration of local subalgebras of upper
triangular matrix rings over an algebraically closed field. Section 7, which
can be read independently of earlier sections, establishes important
properties of the function $M(\ell,n)$ required in earlier theory. An
explicit formula for $M(\ell,n)$ is also derived which is then shown to have
a more simplified form for small values of $\ell$. In Section~8 the algebra
of $n\times n$ matrices of type $(d_{1},d_{2},\ldots,d_{\ell})$ is used to
provide a pictorial representation of the objects introduced in earlier
theory. The content of Section 9, which is titled Open questions, is
self-evident.


\section{Preliminaries}

The symbol $\subseteq$ denotes containment and $\subset$ proper containment
for sets.

If $X$ is any set, then $X^{n}$ denotes the cartesian product of $n$ copies
of $X$.

$\mathbb{N}$ and $\mathbb{N}_{0}$ will denote the sets of positive integers,
and nonnegative integers, respectively.

All rings are associative and possess identity, and all modules are unital.

Let $R$ be a ring and $V$ a right $R$-module. We write $W\leqslant V$ to
indicate that $W$ is a submodule of $V$. If $X$ is a nonempty subset of $V$
and $I$ is a right ideal of $R$, then

\begin{equation*}
(0:^{I}X)\,\overset{\mathrm{def}}{=}\{a\in I:Xa=0\}=I\cap(0:^{R}X).
\end{equation*}

\noindent Observe that $(0:^{I}X)$ is always a right ideal of $R$.

Let $F$ be a field. For each $n\in\mathbb{N}$, \,$\mathbb{M}_{n}(F)$ [resp.~$%
\mathbb{U}_{n}(F)$] [resp.~$\mathbb{U}_{n}^{\ast}(F)$] shall denote the $F$%
-algebra of all $n\times n$ matrices over $F$ [resp.~upper triangular $%
n\times n$ matrices over $F$] [resp.~upper triangular $n\times n$ matrices
over $F$ with constant main diagonal].


\section{The passage to local algebras over an algebraically closed field}

In this section we show that the Conjecture reduces to a consideration of
local algebras over an algebraically closed field.


\begin{lemma}
Let $F$ be a subfield of field $K$ and $R$ an $F$-algebra. Let $r_{1}\otimes
b_{1},r_{2}\otimes b_{2},\ldots,r_{m}\otimes b_{m}\in R\otimes_{F}K$ with $%
r_{i}\in R$ and $b_{i}\in K$ for $i\in\{1,2,\ldots,m\}$. Then

\begin{equation*}
[r_{1}\otimes b_{1},r_{2}\otimes b_{2},\ldots,r_{m}\otimes
b_{m}]^{\ast}=[r_{1},r_{2},\ldots,r_{m}]^{\ast}\otimes(b_{1}b_{2}\ldots
b_{m}).
\end{equation*}
\end{lemma}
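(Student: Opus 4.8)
The plan is to prove this by induction on $m$, the length of the sequence of elementary tensors, using the bilinearity of the tensor product and the definition of the iterated commutator.

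First I would dispose of the base case $m=1$, which is immediate: by definition $[r_1\otimes b_1]^{\ast}=r_1\otimes b_1=[r_1]^{\ast}\otimes b_1$. For the case $m=2$ I would compute directly in $R\otimes_F K$:
\begin{equation*}
[r_1\otimes b_1,\,r_2\otimes b_2]=(r_1\otimes b_1)(r_2\otimes b_2)-(r_2\otimes b_2)(r_1\otimes b_1)=(r_1r_2)\otimes(b_1b_2)-(r_2r_1)\otimes(b_2b_1),
\end{equation*}
and since $K$ is commutative, $b_1b_2=b_2b_1$, so this collapses via bilinearity to $(r_1r_2-r_2r_1)\otimes(b_1b_2)=[r_1,r_2]^{\ast}\otimes(b_1b_2)$. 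This already exhibits the one mildly substantive point, namely that the scalars from $K$ commute past one another and can be pulled out and multiplied together; the rest is bookkeeping.

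For the inductive step, suppose $m>1$ and the identity holds for sequences of length $m-1$. Write $x=[r_1,r_2,\ldots,r_{m-1}]^{\ast}\in R$ and $b=b_1b_2\cdots b_{m-1}\in K$, so that by the induction hypothesis $[r_1\otimes b_1,\ldots,r_{m-1}\otimes b_{m-1}]^{\ast}=x\otimes b$. Then by the recursive definition of the starred bracket and the $m=2$ computation above (applied to the two elementary tensors $x\otimes b$ and $r_m\otimes b_m$),
\begin{equation*}
[r_1\otimes b_1,\ldots,r_m\otimes b_m]^{\ast}=[\,x\otimes b,\ r_m\otimes b_m\,]=[x,r_m]^{\ast}\otimes(bb_m)=[r_1,\ldots,r_{m-1},r_m]^{\ast}\otimes(b_1b_2\cdots b_m),
\end{equation*}
where the last equality again uses the recursive definition of the starred bracket, now in $R$. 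This completes the induction.

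I do not anticipate any real obstacle here: the only thing to be careful about is that the expansion of $[x\otimes b,\,y\otimes c]$ in a tensor product algebra genuinely requires commutativity of the second tensor factor $K$ — which holds since $K$ is a field — and that one does not accidentally assume $R$ is commutative. The argument is otherwise a routine two-line induction, and could even be compressed by noting that $R\otimes_F K\cong R^{(K)}$ is just an extension of scalars, under which the commutator structure is inherited coordinatewise.
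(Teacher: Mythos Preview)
Your proof is correct and follows essentially the same approach as the paper: an induction on $m$, where the inductive step amounts to expanding $[x\otimes b,\,r_m\otimes b_m]$ and using commutativity of $K$ to combine the scalar factors. The paper omits the base cases and writes out only the inductive step, but the content is the same.
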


\begin{proof}
We provide only a proof of the inductive step.

Putting $r=[r_{1},r_{2},\ldots,r_{m}]^{\ast}$ and $b=b_{1}b_{2}\ldots b_{m}$
we see that

\begin{align*}
&[r_{1}\otimes b_{1},r_{2}\otimes b_{2},\ldots,r_{m+1}\otimes b_{m+1}]^{\ast}
\\[0.5ex]
&=[r\otimes b,r_{m+1}\otimes b_{m+1}]\hspace*{0.6em}%
\makebox{\rm [by the
inductive hypothesis]} \\[0.5ex]
&=(r\otimes b)(r_{m+1}\otimes b_{m+1})-(r_{m+1}\otimes b_{m+1})(r\otimes b)
\\[0.5ex]
&=(rr_{m+1})\otimes (bb_{m+1})-(r_{m+1}r)\otimes (b_{m+1}b) \\[0.5ex]
&=%
\parbox[t]{26em}{$(rr_{m+1})\otimes (bb_{m+1})-(r_{m+1}r)\otimes (bb_{m+1})$\hspace*{0.6em}[because $K$ is a field\\[0.5ex]
\,so $b_{m+1}b=bb_{m+1}$]} \\[0.5ex]
&=(rr_{m+1}-r_{m+1}r)\otimes(bb_{m+1}) \\[0.5ex]
&=[r_{1},r_{2},\ldots,r_{m+1}]^{\ast}\otimes(b_{1}b_{2}\ldots b_{m+1}).
\end{align*}
\end{proof}


\begin{proposition}
Let $F$ be a subfield of field $K$ and $R$ an $F$-subalgebra of $\mathbb{M}%
_{n}(F)$. Then:

\begin{list}{{(\alph{standard})}\hfill}{\usecounter{standard}\addtocounter{standard}{0}\setlength{\topsep}{1ex}\setlength{\labelwidth}{1.5\parindent}\setlength{\labelsep}{0.2\parindent}\setlength{\leftmargin}{2.3\parindent}\setlength{\listparindent}{0pt}\setlength{\itemsep}{5pt}}

\item ${\rm dim}_{F}R=\,{\rm dim}_{K}(R\otimes_{F}K)$.

\item $R\otimes_{F}K$ is isomorphic to a $K$-subalgebra of $\M_{n}(K)$.

\item If $R$ is Lie nilpotent of index $m$, then so is $R\otimes_{F}K$.

\end{list}
\end{proposition}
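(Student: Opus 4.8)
The plan is to establish the three parts essentially independently, exploiting standard facts about scalar extension and the preceding Lemma~4.

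For part~(a), I would argue that if $\{r_1,\ldots,r_d\}$ is an $F$-basis of $R$, then $\{r_1\otimes 1,\ldots,r_d\otimes 1\}$ is a $K$-basis of $R\otimes_F K$. That this set spans over $K$ is immediate from the construction of the tensor product; $K$-linear independence follows because $K$ is free (hence flat) as an $F$-module, so the exact sequence $0\to 0\to F^d\to R\to 0$ tensors up to show that any $K$-linear dependence among the $r_i\otimes 1$ would descend to an $F$-linear dependence among the $r_i$. This gives $\dim_K(R\otimes_F K)=d=\dim_F R$.

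For part~(b), I would use the composite of natural maps. The inclusion $R\hookrightarrow \mathbb{M}_n(F)$ induces an $F$-algebra (hence $K$-algebra, after extension) homomorphism $R\otimes_F K\to \mathbb{M}_n(F)\otimes_F K$, and there is a canonical $K$-algebra isomorphism $\mathbb{M}_n(F)\otimes_F K\cong \mathbb{M}_n(K)$ (sending $E_{(i,j)}\otimes b$ to $bE_{(i,j)}$). The map $R\otimes_F K\to \mathbb{M}_n(K)$ so obtained is injective: by part~(a) both $R\otimes_F K$ and its image are spanned over $K$ by the images of an $F$-basis of $R$, which remain $K$-linearly independent inside $\mathbb{M}_n(K)$ since they were already $F$-linearly independent in $\mathbb{M}_n(F)$ and $K\supseteq F$. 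Hence $R\otimes_F K$ is isomorphic to its image, a $K$-subalgebra of $\mathbb{M}_n(K)$.

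For part~(c), suppose $R$ is Lie nilpotent of index $m$, so $[r_1,\ldots,r_{m+1}]^{\ast}=0$ for all $r_i\in R$. An arbitrary element of $R\otimes_F K$ is a finite sum $\sum_j r_j\otimes b_j$, and the Lie bracket $[\,\cdot\,,\,\cdot\,]^{\ast}$ is multilinear over $K$ in each slot, so $[x_1,\ldots,x_{m+1}]^{\ast}$ for general $x_i\in R\otimes_F K$ expands as a $K$-linear combination of terms of the form $[r_1\otimes b_1,\ldots,r_{m+1}\otimes b_{m+1}]^{\ast}$. By Lemma~4 each such term equals $[r_1,\ldots,r_{m+1}]^{\ast}\otimes(b_1\cdots b_{m+1})=0\otimes(b_1\cdots b_{m+1})=0$. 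Therefore $(R\otimes_F K)_{[m+1]}=0$, and since $m$ is by hypothesis exactly the Lie nilpotence index of $R$, a strictly shorter commutator identity fails already in $R$, hence (identifying $r$ with $r\otimes 1$) also in $R\otimes_F K$; thus the index is exactly $m$. I do not expect a genuine obstacle here; the only point requiring slight care is the bookkeeping of multilinearity in part~(c) and making explicit that the flatness of $K$ over $F$ is what underlies the independence claims in parts~(a) and~(b).
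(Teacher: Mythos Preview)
Your proof is correct and follows essentially the same route as the paper's, which likewise cites standard scalar-extension facts for (a), uses flatness together with the isomorphism $\mathbb{M}_n(F)\otimes_F K\cong\mathbb{M}_n(K)$ for (b), and invokes the preceding lemma (Lemma~3 in the paper's numbering, not Lemma~4) plus multilinearity for (c). One small remark: under the paper's definition, ``Lie nilpotent of index $m$'' means only that $R_{[m+1]}=0$, with no minimality requirement, so your closing sentence in (c) about the index being \emph{exactly} $m$ is correct but not needed.
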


\begin{proof}
(a) is standard theory - see for example \cite[Exercise 19.3, p.~231]{AF92}.

(b) The hypothesis entails $R\otimes_{F}K$ is a $K$-subalgebra of $\mathbb{M}%
_{n}(F)\otimes_{F}K$. The result follows noting that $\mathbb{M}%
_{n}(F)\otimes_{F}K\cong\mathbb{M}_{n}(K)$ as $K$-algebras (see \cite[%
Chapter 9, Exercise 10, p. 94]{Pa91}).

(c) Suppose $R$ is Lie nilpotent of index $m$. Take $x_{1},x_{2},%
\ldots,x_{m+1}\in R\otimes_{F}K$. Since the expression $[x_{1},x_{2},%
\ldots,x_{m+1}]^{\ast}$ is additive in each of its $m+1$ arguments, $%
[x_{1},x_{2},\ldots,x_{m+1}]^{\ast}$ is expressible as a sum of elements of
the form $[r_{1}\otimes b_{1},r_{2}\otimes b_{2},\ldots,r_{m+1}\otimes
b_{m+1}]^{\ast}$ where $r_{i}\in R$ and $b_{i}\in K$ for $%
i\in\{1,2,\ldots,m+1\}$. By Lemma 3

\begin{align*}
&[r_{1}\otimes b_{1},r_{2}\otimes b_{2},\ldots,r_{m+1}\otimes b_{m+1}]^{\ast}
\\[0.5ex]
&=[r_{1},r_{2},\ldots,r_{m+1}]^{\ast}\otimes(b_{1}b_{2}\ldots b_{m+1}) \\%
[0.5ex]
&=0\otimes(b_{1}b_{2}\ldots b_{m+1})\hspace*{0.6em}%
\makebox{\rm [because $R$
is Lie nilpotent of index $m$]} \\[0.5ex]
&=0.
\end{align*}

\noindent It follows that $[x_{1},x_{2},\ldots,x_{m+1}]^{\ast}=0$, so $%
R\otimes_{F}K$ is Lie nilpotent of index $m$.
\end{proof}


\begin{theorem}
Let ${\mathcal{C}}$ be a nonempty class of fields and $\overline{\mathcal{C}}
$ the class of all subfields of fields in ${\mathcal{C}}$. The following
statements are equivalent:

\begin{list}{{(\alph{standard})}\hfill}{\usecounter{standard}\addtocounter{standard}{0}\setlength{\topsep}{1ex}\setlength{\labelwidth}{1.1\parindent}\setlength{\labelsep}{0.2\parindent}\setlength{\leftmargin}{2.3\parindent}\setlength{\listparindent}{0pt}\setlength{\itemsep}{5pt}}

\item The Conjecture holds in respect of all fields in ${\mathcal C}$;

\item The Conjecture holds in respect of all fields in $\overline{\mathcal C}$.
\end{list}
\end{theorem}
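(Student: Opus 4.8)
The plan is to prove the two implications separately; since Lemma 3 and Proposition 5 carry all of the weight, both directions will be short.

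The implication (b) $\Rightarrow$ (a) is immediate: every field is (trivially) a subfield of itself, so ${\mathcal C}\subseteq\overline{\mathcal C}$, and a statement holding for every field in $\overline{\mathcal C}$ holds in particular for every field in ${\mathcal C}$.

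For (a) $\Rightarrow$ (b), I would fix an arbitrary field $F\in\overline{\mathcal C}$, choose (by the definition of $\overline{\mathcal C}$) a field $K\in{\mathcal C}$ having $F$ as a subfield, and let $R$ be any $F$-subalgebra of $\mathbb{M}_{n}(F)$ with Lie nilpotence index $m$, for arbitrary $m,n\in\mathbb{N}$. Set $S\define R\otimes_{F}K$. By Proposition 5(b), $S$ is (isomorphic to) a $K$-subalgebra of $\mathbb{M}_{n}(K)$, and by Proposition 5(c) it is Lie nilpotent of index $m$, i.e. $S_{[m+1]}=0$. To be able to invoke the Conjecture for $K$ with exactly this data I would also observe that $r\mapsto r\otimes 1$ is an injective $F$-algebra homomorphism $R\hookrightarrow S$, hence a Lie homomorphism, so that $S_{[m]}$ contains the nonzero image of $R_{[m]}$; thus the Lie nilpotence index of $S$ is precisely $m$. (Alternatively one can dispense with this remark once the monotonicity of $M(\ell,n)$ in $\ell$, established in Section 7, is available.) Applying hypothesis (a) to the $K$-algebra $S$ then gives $\dim_{K}S\leqslant M(m+1,n)$, while Proposition 5(a) yields $\dim_{F}R=\dim_{K}S$. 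Hence $\dim_{F}R\leqslant M(m+1,n)$, that is, the Conjecture holds in respect of $F$; since $F\in\overline{\mathcal C}$ was arbitrary, (b) follows.

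I do not expect a genuine obstacle here: the content lies entirely in Proposition 5. The only two points that call for a little care are remembering that every field belongs to $\overline{\mathcal C}$ — which is what makes (b) $\Rightarrow$ (a) trivial rather than vacuous — and the bookkeeping over whether scalar extension preserves the Lie nilpotence index exactly or merely bounds it, which is settled by the embedding $R\hookrightarrow R\otimes_{F}K$ (or, failing that, by monotonicity of $M(\ell,n)$).
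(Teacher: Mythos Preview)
Your proposal is correct and follows essentially the same argument as the paper: both directions use that ${\mathcal C}\subseteq\overline{\mathcal C}$ for the trivial implication, and the scalar-extension proposition (Proposition~4 in the paper's numbering, your ``Proposition~5'') for the substantive one. The only difference is your extra paragraph verifying that the Lie nilpotence index of $R\otimes_{F}K$ is \emph{exactly} $m$; the paper does not bother with this, because under its definition ``Lie nilpotent of index $m$'' means simply $R_{[m+1]}=0$ (with no minimality requirement), so Proposition~4(c) already gives everything needed to invoke the Conjecture over $K$ directly.
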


\begin{proof}
(b)$\Rightarrow$(a) is obvious since ${\mathcal{C}}\subseteq\overline{%
\mathcal{C}}$.

(a)$\Rightarrow$(b) Let $m$ and $n$ be positive integers, $F\in\overline{%
\mathcal{C}}$, and $R$ an $F$-subalgebra of $\mathbb{M}_{n}(F)$ with Lie
nilpotence index $m$. We must show that $\mathrm{dim}_{F}R\leqslant M(m+1,n)$%
.

Choose field extension $K$ of $F$ such that $K\in{\mathcal{C}}$. By
Proposition 4 ((b) and (c)), the $K$-algebra $R\otimes_{F}K$ is Lie
nilpotent of index $m$ and is isomorphic to a $K$-subalgebra of $\mathbb{M}%
_{n}(K)$. By part (a) of this theorem, $\mathrm{dim}_{K}(R\otimes_{F}K)%
\leqslant M(m+1,n)$. Hence by Proposition 4(a),

\begin{equation*}
\mathrm{dim}_{F}R=\mathrm{dim}_{K}(R\otimes_{F}K)\leqslant M(m+1,n),
\end{equation*}

\noindent as required.
\end{proof}

It follows from Theorem 5 that the Conjecture will hold for a given field $F$%
, if it can be shown to hold for the algebraic closure of $F$. We shall
exploit this fact in the next section.


\begin{proposition}
Every idempotent in a ring satisfying the Engel condition is central.
\end{proposition}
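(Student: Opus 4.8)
The plan is to show directly that if $e$ is idempotent in a ring $R$ satisfying the Engel condition of index $m$, then $e$ commutes with every element of $R$. Fix $r \in R$ and set $x = er - ere$ (the ``upper off-diagonal'' part of $r$ relative to the Peirce decomposition $R = eRe \oplus eR(1-e) \oplus (1-e)Re \oplus (1-e)R(1-e)$). The key observation is that $ex = x$ and $xe = 0$, so $x^2 = (ex)(xe)\cdot(\text{stuff}) = 0$; more precisely $x^2 = er\cdot er - er\cdot ere - ere\cdot er + ere\cdot ere$, and every term contains a factor $xe$-type product forcing $x^2 = 0$. Consequently $[x,e] = xe - ex = -x$, and then an easy induction gives $[x,\underbrace{e,\ldots,e}_{k}]^{\ast} = (-1)^k x$ for all $k \geqslant 1$.

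First I would record the Peirce-decomposition computation showing $ex = x$, $xe = 0$, hence $x^2 = 0$ and $[x,e] = -x$. Next I would run the induction: assuming $[x,\underbrace{e,\ldots,e}_{k}]^{\ast} = (-1)^k x$, the next bracket is $[(-1)^k x, e] = (-1)^k(xe - ex) = (-1)^k(-x) = (-1)^{k+1}x$. Applying this with $k = m$ and invoking the Engel condition of index $m$ (i.e. $[x,\underbrace{e,\ldots,e}_{m}]^{\ast} = 0$) yields $(-1)^m x = 0$, so $x = 0$, i.e. $er = ere$. By the symmetric argument applied to $y = re - ere$ (now $ye = y$, $ey = 0$, so $y^2 = 0$ and $[y,e] = y$, giving $[y,\underbrace{e,\ldots,e}_{m}]^{\ast} = y = 0$), we get $re = ere$. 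Combining, $er = ere = re$, so $e$ is central.

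I expect no serious obstacle here; the only point requiring a little care is the bookkeeping in verifying $x^2 = 0$ directly from $x = er - ere$ without appealing to an abstract Peirce decomposition (since the ring need not have the decomposition written out), but this is a routine expansion using $e^2 = e$: each of the four terms in $x^2$ simplifies to $erere$, and they cancel in pairs with the signs $+,-,-,+$. One should also note that the Engel identity is used in the strong form $[x,\underbrace{y,\ldots,y}_{m}]^{\ast} = 0$ for \emph{all} $x,y \in R$, which is exactly what the Engel condition of index $m$ provides; no appeal to Proposition 1 or to Lie nilpotence is needed. The whole argument is characteristic-free, since we only ever conclude $(-1)^m x = 0 \Rightarrow x = 0$.
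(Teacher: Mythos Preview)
Your proof is correct and follows essentially the same route as the paper: isolate an off-diagonal Peirce component (the paper uses $\alpha=(1-e)ae$, you use $x=er-ere=er(1-e)$), observe that bracketing with $e$ reproduces the element up to sign, iterate, and invoke the Engel identity to force it to vanish; then repeat on the other off-diagonal piece. One remark: the computation $x^{2}=0$ is superfluous (and your claim that all four terms simplify to $erere$ is not quite right---two simplify to $erer$ and two to $erere$, though they still cancel); all the argument needs is $ex=x$ and $xe=0$, which immediately give $[x,e]=-x$.
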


\begin{proof}
If $R$ is an arbitrary ring and $e=e^{2}\in R$, then a routine calculation
shows that for each $a\in R$,

\begin{equation*}
[[1-e,(1-e)a],e]=(1-e)ae.
\end{equation*}

\noindent Putting $\alpha=(1-e)ae$ we see that $\alpha e=\alpha$ and $%
e\alpha=0$ from which it follows that $[\alpha,e]=\alpha$. Iterating, we
obtain

\begin{align*}
&[[\alpha,e],e]=\alpha, \\[0.5ex]
&[[[\alpha,e],e],e]=\alpha,\hspace*{0.6em}\makebox{\rm and in general} \\%
[0.5ex]
&[\alpha,\overset{m}{\overbrace{e,e,\ldots,e}}]^{\ast}=\alpha,\hspace*{0.6em}%
\makebox{\rm for each $m\in\N$.}
\end{align*}

\noindent If $R$ satisfies the Engel condition of index $m$, then we have

\begin{equation*}
\alpha=[\alpha,\overset{m}{\overbrace{e,e,\ldots,e}}]^{\ast}=0,
\end{equation*}

\noindent and so

\begin{align}
(1-e)ae=0.
\end{align}

\noindent Interchanging the roles of $e$ and $1-e$ in the above argument
yields

\begin{align}
ea(1-e)=0.
\end{align}

\noindent Equations (8) and (9) imply $ae-eae=0$ and $ea-eae=0$ whence $%
ea=ae $. We conclude that $e$ is central.
\end{proof}


\begin{proposition}
Every right artinian ring satisfying the Engel condition is isomorphic to a
finite direct product of local rings.
\end{proposition}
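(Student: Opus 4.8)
The plan is to exploit the structure theory of right artinian rings together with Proposition~8 (idempotents are central). First I would recall that a right artinian ring $R$ is semiperfect, so its identity decomposes as a sum $1=e_1+e_2+\cdots+e_t$ of orthogonal primitive idempotents, and the Wedderburn--Artin-type decomposition gives $R=\bigoplus_{i,j}e_iRe_j$ as an internal decomposition of $R$ as an $R$-bimodule. Under the Engel condition, Proposition~8 tells us every idempotent of $R$ is central; in particular each $e_i$ is central. Centrality of the $e_i$ forces $e_iRe_j=e_ie_jR=0$ whenever $i\neq j$ (since $e_ie_j=0$ for $i\ne j$), so the decomposition collapses to $R=\bigoplus_{i=1}^{t}e_iR=\bigoplus_{i=1}^{t}Re_i=\prod_{i=1}^{t}e_iRe_i$, a direct product of the rings $R_i\define e_iRe_i$, each with identity $e_i$.

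Next I would argue that each factor $R_i=e_iRe_i$ is a local ring. Since $e_i$ is primitive in $R$ and central, $e_i$ is a primitive idempotent in the ring $R_i$ that equals its own identity, so $R_i$ has no nontrivial idempotents: any idempotent $f\in R_i$ is an idempotent of $R$, hence central in $R$, and $f\le e_i$, so primitivity of $e_i$ forces $f=0$ or $f=e_i$. A right artinian ring with no nontrivial idempotents is local --- this is standard: in a right artinian ring the Jacobson radical $J(R_i)$ is nilpotent and $R_i/J(R_i)$ is semisimple artinian; having no nontrivial idempotents, $R_i/J(R_i)$ cannot be a product of two or more simple rings, nor can a single matrix ring $\M_k(D)$ with $k\ge 2$ occur (it has nontrivial idempotents, which lift through the nil radical), so $R_i/J(R_i)$ is a division ring and $R_i$ is local. (Alternatively one invokes directly the characterization that a semiperfect ring is a finite product of local rings precisely when all its idempotents are central.)

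I do not anticipate a serious obstacle here; the proof is essentially an assembly of well-known facts. The one point requiring a little care is the lifting-of-idempotents step used to conclude $R_i/J(R_i)$ is a division ring: one must either invoke that idempotents lift modulo the nil ideal $J(R_i)$ (so a nontrivial idempotent in the quotient would produce one in $R_i$, contradicting what we showed), or argue directly that a right artinian ring whose only idempotents are $0$ and $1$ has the property that every non-unit is nilpotent, hence lies in $J(R_i)$, which makes $R_i$ local. Either route is routine. Thus, to summarize the order of steps: (i) invoke Proposition~8 to get all idempotents central; (ii) use semiperfectness of right artinian rings to write $1$ as a sum of orthogonal primitive (hence central) idempotents $e_i$; (iii) use centrality and orthogonality to split $R$ as the product $\prod_i e_iRe_i$; (iv) show each $e_iRe_i$ is right artinian with no nontrivial idempotents, hence local.
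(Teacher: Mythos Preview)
Your proposal is correct and follows essentially the same route as the paper: decompose $1$ into primitive orthogonal idempotents (using that a right artinian ring is semiperfect), invoke centrality of idempotents under the Engel condition to turn the module decomposition into a ring direct product, and conclude that each factor $e_iRe_i$ is local. The paper's argument is slightly terser---it cites directly the standard fact that each $e_iR$ has unique maximal submodule $e_iJ(R)$, so locality of $e_iR=e_iRe_i$ is immediate---whereas you re-derive locality via the absence of nontrivial idempotents; note also that the centrality-of-idempotents result is Proposition~6 in the paper's numbering, not Proposition~8.
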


\begin{proof}
It is known (see \cite[Theorem 27.6, p.~304]{AF92} or \cite[Theorem 5.9,
p.~49]{Pa91}) that every right artinian ring $R$ contains a complete set of
primitive orthogonal idempotents $\{e_{1},e_{2},\ldots,e_{k}\}$ such that $R$
decomposes as

\begin{equation*}
R_{R}\cong e_{1}R\oplus e_{2}R\oplus \cdots \oplus e_{k}R,
\end{equation*}

\noindent where each $e_{i}R$ has unique maximal proper submodule $e_{i}J(R)$%
. If $R$ satisfies the Engel condition, then each idempotent $e_{i}$ is
central by Proposition 6, so the above decomposition is a decomposition of
(two-sided) ideals with each $e_{i}R=e_{i}Re_{i}$ a local ring.
\end{proof}


\begin{lemma}
Let $F$ be a field and $e$ an idempotent of $\mathbb{M}_{n}(F)$. If $\mathrm{%
rank}\,e=r$, then $e\mathbb{M}_{n}(F)e\cong\mathbb{M}_{r}(F)$ as $F$%
-algebras.
\end{lemma}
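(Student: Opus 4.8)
The plan is to exhibit an explicit $F$-algebra isomorphism $e\mathbb{M}_{n}(F)e\to\mathbb{M}_{r}(F)$ by choosing a convenient basis for the underlying $F$-space adapted to the splitting $F^{n}=eF^{n}\oplus(1-e)F^{n}$. First I would observe that $e$, viewed as an $F$-space endomorphism of $V\define F^{n}$, is idempotent, so $V=\mathrm{Im}\,e\oplus\ker e$ with $\dim_{F}\mathrm{Im}\,e=\mathrm{rank}\,e=r$ and $\dim_{F}\ker e=n-r$. Pick an ordered basis $v_{1},\ldots,v_{n}$ of $V$ whose first $r$ members span $\mathrm{Im}\,e$ and whose last $n-r$ span $\ker e$; with respect to this basis $e$ is represented by the block-diagonal matrix $\mathrm{diag}(I_{r},0)$, and conjugating by the change-of-basis matrix gives an $F$-algebra automorphism of $\mathbb{M}_{n}(F)$ carrying $e$ to this standard idempotent. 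Hence, without loss of generality, $e=\mathrm{diag}(I_{r},0)$.

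Next I would compute $e\mathbb{M}_{n}(F)e$ for this standard $e$ directly: writing an arbitrary matrix in $r+(n-r)$ block form $\left(\begin{smallmatrix}A&B\\C&D\end{smallmatrix}\right)$, left multiplication by $e$ kills the bottom block row and right multiplication by $e$ kills the right block column, so $e\mathbb{M}_{n}(F)e=\left\{\left(\begin{smallmatrix}A&0\\0&0\end{smallmatrix}\right):A\in\mathbb{M}_{r}(F)\right\}$. The map sending $\left(\begin{smallmatrix}A&0\\0&0\end{smallmatrix}\right)$ to $A$ is then visibly an $F$-linear bijection onto $\mathbb{M}_{r}(F)$; it is multiplicative because the product of two such block matrices is $\left(\begin{smallmatrix}AA'&0\\0&0\end{smallmatrix}\right)$; and it sends $e$ (the identity element of the ring $e\mathbb{M}_{n}(F)e$) to $I_{r}$. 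Thus it is an $F$-algebra isomorphism.

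Alternatively, and perhaps more cleanly, I would phrase this representation-theoretically: $e\mathbb{M}_{n}(F)e\cong\mathrm{End}_{\mathbb{M}_{n}(F)}(e\mathbb{M}_{n}(F))$ as $F$-algebras (with $\mathbb{M}_{n}(F)$ acting on the right), and since $\mathbb{M}_{n}(F)\cong\mathrm{End}_{F}V$ is simple artinian, the right ideal $e\mathbb{M}_{n}(F)$ is a direct sum of $r=\mathrm{rank}\,e$ copies of the simple right module $F^{n}$ (each generator $E_{ii}\mathbb{M}_{n}(F)$, $1\le i\le r$, being a copy of the row-space module), whence $\mathrm{End}_{\mathbb{M}_{n}(F)}(e\mathbb{M}_{n}(F))\cong\mathbb{M}_{r}(\mathrm{End}_{\mathbb{M}_{n}(F)}(F^{n}))\cong\mathbb{M}_{r}(F)$ by Schur's lemma. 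Either route is essentially routine; I do not anticipate a genuine obstacle. The only point requiring mild care is bookkeeping — making sure the ring $e\mathbb{M}_{n}(F)e$ is understood to have identity $e$ (not $I_{n}$), so that ``isomorphism of $F$-algebras'' is the correct statement — and confirming that the change-of-basis conjugation in the first approach is indeed an $F$-algebra automorphism of $\mathbb{M}_{n}(F)$, which is standard. I would present the first, hands-on argument in the paper for transparency.
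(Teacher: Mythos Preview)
Your proposal is correct. The paper's proof is a coordinate-free version of your first argument: instead of conjugating $e$ to $\mathrm{diag}(I_{r},0)$ and reading off the block structure, it invokes directly the identification $e\,\mathbb{M}_{n}(F)\,e\cong\mathrm{End}_{F}\bigl(F^{(n)}e\bigr)$ (the corner of $\mathrm{End}_{F}V$ cut out by a projection is naturally the endomorphism ring of its image), then uses $\dim_{F}\bigl(F^{(n)}e\bigr)=\mathrm{rank}\,e=r$ to conclude $\mathrm{End}_{F}\bigl(F^{(n)}e\bigr)\cong\mathrm{End}_{F}\bigl(F^{(r)}\bigr)\cong\mathbb{M}_{r}(F)$. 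Your block computation is exactly this isomorphism written in an adapted basis, so the two are the same in substance; the paper's phrasing is simply terser. Your second alternative, via $e\mathbb{M}_{n}(F)e\cong\mathrm{End}_{\mathbb{M}_{n}(F)}\bigl(e\mathbb{M}_{n}(F)\bigr)$ and the simple-artinian structure of $\mathbb{M}_{n}(F)$, is a genuinely different (though equally standard) route that the paper does not take.
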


\begin{proof}
Since $\mathrm{rank}\,e=r$, $F^{(n)}e$ has dimension $r$ as an $F$-space, so 
$F^{(n)}e\cong F^{(r)}$ as $F$-spaces. Then

\begin{align*}
e\mathbb{M}_{n}(F)e & \cong\mathrm{End}_{F}\left(F^{(n)}e\right) \\[0.5ex]
& \cong\mathrm{End}_{F}\left(F^{(r)}\right) \\[0.5ex]
& \cong \mathbb{M}_{r}(F).
\end{align*}
\end{proof}

The following theorem tells us that for a given field $F$, the Conjecture
will hold for all $F$-subalgebras of $\mathbb{M}_{n}(F)$, if it can be shown
to hold for all local $F$-subalgebras of $\mathbb{M}_{n}(F)$.


\begin{theorem}
The following statements are equivalent for a field $F$:

\begin{list}{{(\alph{standard})}\hfill}{\usecounter{standard}\addtocounter{standard}{0}\setlength{\topsep}{1ex}\setlength{\labelwidth}{1.5\parindent}\setlength{\labelsep}{0.2\parindent}\setlength{\leftmargin}{2.3\parindent}\setlength{\listparindent}{0pt}\setlength{\itemsep}{5pt}}

\item The Conjecture holds in respect of $F$;

\item For all positive integers $m$ and $n$, if $R$ is any {\em local} $F$-subalgebra of $\M_{n}(F)$ with Lie nilpotence index $m$, then

\[
{\rm dim}_{F}R\leqslant M(m+1,n).
\]

\end{list}
\end{theorem}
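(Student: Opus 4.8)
The plan is to prove the equivalence of (a) and (b) by showing that both implications follow from the structure theory already assembled. The direction (a)$\Rightarrow$(b) is immediate, since a local $F$-subalgebra is in particular an $F$-subalgebra of $\M_n(F)$, so the hypothesis that the Conjecture holds in respect of $F$ applies verbatim. The substance is in (b)$\Rightarrow$(a), and the strategy is to reduce an arbitrary $F$-subalgebra $R$ of $\M_n(F)$ with Lie nilpotence index $m$ to a collection of local $F$-subalgebras of smaller matrix rings, apply the hypothesis to each, and then show the dimension bounds add up correctly by way of a superadditivity (or concavity-type) property of the function $M(\ell,\,\cdot\,)$.

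Here are the key steps in the order I would carry them out. First, given a local-or-not $F$-subalgebra $R$ of $\M_n(F)$ with Lie nilpotence index $m$, observe that $R$ is finite dimensional over $F$, hence right artinian, and satisfies the Engel condition of index $m$; by Proposition 7 we may write $R\cong R_1\times R_2\times\cdots\times R_k$ with each $R_i=e_iRe_i$ a local ring, where $\{e_1,\ldots,e_k\}$ is a complete set of primitive orthogonal idempotents, which by Proposition 6 are central in $R$ and hence (being idempotents of $\M_n(F)$) correspond to a direct-sum decomposition $F^{(n)}=V_1\oplus\cdots\oplus V_k$ into $R$-submodules with $\dim_F V_i=n_i$ and $\sum n_i=n$. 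Second, each $R_i$ acts faithfully on $V_i$ (the action of $R$ on $F^{(n)}$ is faithful, and $e_j R e_j$ kills $V_i$ for $j\neq i$), so $R_i$ embeds as a local $F$-subalgebra of $\mathrm{End}_F V_i\cong\M_{n_i}(F)$, and $R_i$ inherits Lie nilpotence index $\leqslant m$ from $R$ (a quotient/subalgebra of a Lie nilpotent algebra of index $m$ is again Lie nilpotent of index $\leqslant m$; here $R_i$ is the corner $e_iRe_i$, which is a homomorphic image of $R$ via $r\mapsto e_ir$). Third, apply hypothesis (b) to each $R_i$ to get $\dim_F R_i\leqslant M(m+1,n_i)$, and sum: $\dim_F R=\sum_{i=1}^k \dim_F R_i\leqslant\sum_{i=1}^k M(m+1,n_i)$.

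The final — and main — step is to establish the inequality $\sum_{i=1}^k M(m+1,n_i)\leqslant M(m+1,n)$ whenever $n_1+\cdots+n_k=n$; equivalently, it suffices to prove the two-term case $M(\ell,a)+M(\ell,b)\leqslant M(\ell,a+b)$ and induct on $k$. This is where I expect the real work to lie, and I would handle it as a lemma about the combinatorial function $M(\ell,\,\cdot\,)$: expanding the definition, $M(\ell,a)+M(\ell,b)$ is realized by a partition of $a$ into $\ell$ parts and a partition of $b$ into $\ell$ parts; merging the parts pairwise (adding the $i$th part of the first to the $i$th part of the second) gives a partition of $a+b$ into $\ell$ parts, and the inequality $(k_i+k_i')^2\geqslant k_i^2+k_i'^2$ together with $(a+b)^2\geqslant a^2+b^2$ (plus the extra ``$+1$'' being absorbed since we are summing two $M$'s, each contributing a $+1$, against one $M$ contributing a single $+1$) must be checked to yield $\tfrac12\bigl((a+b)^2-\sum(k_i+k_i')^2\bigr)+1\geqslant\bigl[\tfrac12(a^2-\sum k_i^2)+1\bigr]+\bigl[\tfrac12(b^2-\sum k_i'^2)+1\bigr]$. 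A quick expansion shows the difference equals $ab-\sum_i k_ik_i'-1$, which is $\geqslant 0$ precisely because $ab=\bigl(\sum k_i\bigr)\bigl(\sum k_j'\bigr)\geqslant\sum_i k_ik_i'+1$ as long as not all but one of the products vanish — and the degenerate cases ($a=0$ or $b=0$, or $k=1$) are trivial. Depending on how the paper prefers to organize things, this superadditivity fact is exactly the sort of property of $M(\ell,n)$ that Section 7 is advertised to contain, so the cleanest write-up may simply cite the relevant lemma from that section; but if it is not available at this point in the text, the two-line estimate above supplies it. Modulo that lemma, the equivalence (a)$\Leftrightarrow$(b) follows.
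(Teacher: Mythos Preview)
Your proposal is correct and follows essentially the same approach as the paper: decompose $R$ via Proposition~7 into local corners $e_iR\subseteq e_i\mathbb{M}_n(F)e_i\cong\mathbb{M}_{r_i}(F)$ (the paper invokes Lemma~8 for this isomorphism, where you argue via the faithful action on $V_i=F^{(n)}e_i$), apply hypothesis~(b) to each piece, and then sum using the superadditivity inequality $\sum_i M(\ell,n_i)\leqslant M(\ell,\sum_i n_i)$, which is exactly Proposition~28 in Section~7. Your inline sketch of that inequality, including the observation that the degenerate one-part case needs separate handling, matches the paper's own proof of Proposition~28.
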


\begin{proof}
(a)$\Rightarrow$(b) is obvious.

(b)$\Rightarrow$(a) Let $m$ and $n$ be positive integers and $R$ an $F$%
-subalgebra of $\mathbb{M}_{n}(F)$ with Lie nilpotence index $m$. Note that $%
R$ satisfies the Engel condition of index $m$. Since $R$ is a finite
dimensional $F$-algebra, it is right (and left) artinian, and so by
Proposition 7, $R\cong R_{1}\times R_{2}\times\cdots\times R_{k}$ where each 
$R_{i}$ is a local right artinian ring. This entails the existence of a
complete set of central primitive orthogonal idempotents $%
\{e_{1},e_{2},\ldots,e_{k}\}$ in $R$ such that

\begin{align}
R_{R}\cong e_{1}R\oplus e_{2}R\oplus\cdots\oplus e_{k}R
\end{align}

\noindent with $e_{i}R=e_{i}Re_{i}\cong R_{i}$ for each $i\in\{1,2,\ldots,k%
\} $. For each $i\in\{1,2,\ldots,k\}$ put

\begin{align}
r_{i}\,\overset{\mathrm{def}}{=}\mathrm{rank}\,e_{i}.
\end{align}

\noindent The equation $1_{R}=I_{n}=e_{1}+e_{2}+\cdots+e_{k}$ induces the $F$%
-space decomposition

\begin{equation*}
F^{(n)}=F^{(n)}e_{1}\oplus F^{(n)}e_{2}\oplus \cdots \oplus F^{(n)}e_{k}.
\end{equation*}

\noindent Thus

\begin{align}
n & = \mathrm{dim}_{F}F^{(n)}  \notag \\[0.5ex]
& = \mathrm{dim}_{F}(F^{(n)}e_{1})+\mathrm{dim}_{F}(F^{(n)}e_{2})+\cdots+%
\mathrm{dim}_{F}(F^{(n)}e_{k})  \notag \\[0.5ex]
& = r_{1}+r_{2}+\cdots+r_{k}\hspace*{0.6em}\makebox{\rm [by (11)].}
\end{align}

\noindent Observe that each local ring $e_{i}R$ is an $F$-subalgebra of $%
e_{i}\mathbb{M}_{n}(F)e_{i}$, and that $e_{i}\mathbb{M}_{n}(F)e_{i}\cong 
\mathbb{M}_{r_{i}}(F)$ for each $i\in\{1,2,\ldots,k\}$, by Lemma 8. It is
clear too that each $e_{i}R$ must be Lie nilpotent of index $m$, since $R$
has the same property and $e_{i}R\subseteq R$.

The aforementioned facts, together with (b), imply that $\mathrm{dim}%
_{F}(e_{i}R)\leqslant M(m+1,r_{i})$ for each $i\in\{1,2,\ldots, k\}$. Then

\begin{align}
\mathrm{dim}_{F}R & = \sum_{i=1}^{k}\mathrm{dim}_{F}(e_{i}R)\hspace*{0.6em}%
\makebox{\rm [by (10)]}  \notag \\[0.5ex]
& \leqslant\sum_{i=1}^{k}M(m+1,r_{i})  \notag \\[0.5ex]
& \leqslant M\left(m+1,\sum_{i=1}^{k}r_{i}\right)\hspace*{0.6em}%
\makebox{\rm
[by Proposition 28]}  \notag \\[0.5ex]
& = M(m+1,n)\hspace*{0.6em}\makebox{\rm [by (12)].}
\end{align}
\end{proof}


\section{Simultaneous triangularization and the passage to upper triangular
matrix rings}

The main result of this section (Theorem 12) shows that for algebraically
closed fields $F$, the Conjecture reduces to a consideration of $F$%
-subalgebras of $\mathbb{U}_{n}^{\ast}(F)$, the algebra of upper triangular
matrices over $F$ with constant main diagonal.

Recall that an $F$-subalgebra $R$ of $\mathbb{M}_{n}(F)$ is said to be \emph{%
simultaneously upper triangularizable} in $\mathbb{M}_{n}(F)$ if there
exists an invertible $U\in\mathbb{M}_{n}(F)$ such that $U^{-1}RU\subseteq 
\mathbb{U}_{n}(F)$.

A key result is the following. Although implicit in \cite[Theorem 1, p.~434]%
{Ja44} we shall provide a proof in the absence of a suitable reference.


\begin{proposition}
Let $F$ be an algebraically closed field.

\begin{list}{{(\alph{standard})}\hfill}{\usecounter{standard}\addtocounter{standard}{0}\setlength{\topsep}{1ex}\setlength{\labelwidth}{1.1\parindent}\setlength{\labelsep}{0.2\parindent}\setlength{\leftmargin}{2.3\parindent}\setlength{\listparindent}{0pt}\setlength{\itemsep}{5pt}}

\item If $R$ is a finite dimensional local $F$-algebra, then $R$ has $F$-space decomposition $R=F\cdot 1_{R}\oplus J(R)$.

\item If $R$ is a local $F$-subalgebra of $\M_{n}(F)$, then there exists an invertible $U\in\M_{n}(F)$ such that $U^{-1}RU\subseteq \U_{n}^{\ast}(F)$.
Thus, $R$ is isomorphic to an $F$-subalgebra of $\U_{n}^{\ast}(F)$.

\end{list}
\end{proposition}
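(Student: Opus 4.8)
The plan is to prove part (a) first, since part (b) will rest on it. For (a), since $R$ is a finite dimensional local $F$-algebra, $J(R)$ is nilpotent and $R/J(R)$ is a finite dimensional division $F$-algebra. Here is where algebraic closedness enters: a finite dimensional division algebra over an algebraically closed field $F$ must equal $F$ itself, because any element generates a commutative finite field extension of $F$, which can only be $F$. Hence $R/J(R)\cong F$, so the canonical map $F\cdot 1_{R}\to R/J(R)$ is an isomorphism of $F$-spaces. This gives $R=F\cdot 1_{R}+J(R)$ and $F\cdot 1_{R}\cap J(R)=0$ (as $1_{R}\notin J(R)$), i.e.\ the direct sum decomposition $R=F\cdot 1_{R}\oplus J(R)$.

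For (b), let $R$ be a local $F$-subalgebra of $\mathbb{M}_{n}(F)$, and view $V=F^{(n)}$ as a right $R$-module. The strategy is to build a complete flag $0=V_{0}\subset V_{1}\subset\cdots\subset V_{n}=V$ of $R$-submodules with $\dim_{F}V_{i}=i$; choosing a basis of $V$ compatible with this flag, and letting $U$ be the change-of-basis matrix, gives $U^{-1}RU\subseteq\mathbb{U}_{n}(F)$. To produce the flag it suffices to show that every nonzero finite dimensional right $R$-module $W$ has a one-dimensional submodule and then induct on $\dim_{F}W$ (passing to $W/(\text{line})$, which is again a module over the local ring $R$). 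A one-dimensional submodule is exactly a common eigenvector for the action of $R$ on $W$. Since $J(R)$ is nilpotent, it acts nilpotently on $W$, so $WJ(R)\neq W$ by Nakayama; pick any nonzero $w$ in the (nonzero) $R$-submodule $(0:^{W}J(R))=\{w\in W: wJ(R)=0\}$, which is annihilated by $J(R)$. By part (a), every element of $R$ is of the form $a\cdot 1_{R}+x$ with $a\in F$ and $x\in J(R)$, so $w(a\cdot 1_{R}+x)=aw$; thus $Fw$ is a one-dimensional $R$-submodule of $W$, as required. Finally, since each basis vector spans a submodule on which $x\in J(R)$ acts as zero, the matrices in $U^{-1}RU$ in fact have constant main diagonal (the diagonal entry being the scalar $a$, which is the same in every position), so $U^{-1}RU\subseteq\mathbb{U}_{n}^{\ast}(F)$. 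Composing the inclusion with conjugation by $U$ yields the asserted embedding of $R$ into $\mathbb{U}_{n}^{\ast}(F)$.

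The main obstacle is the eigenvector/one-dimensional-submodule step; everything else is bookkeeping. The crucial inputs there are that $J(R)$ is nilpotent (so Nakayama applies and $(0:^{W}J(R))\neq 0$) and the structural fact from part (a) that $R=F\cdot 1_{R}\oplus J(R)$, which forces the quotient action of $R$ on the eigenline to be by scalars in $F$ rather than merely by some larger division algebra — this is exactly why algebraic closedness is needed, and it is also what upgrades $\mathbb{U}_{n}(F)$ to $\mathbb{U}_{n}^{\ast}(F)$. I would present the induction carefully to make sure the constant-diagonal conclusion is tracked at each stage (the scalar $a$ attached to a given $r\in R$ is independent of which flag step one is looking at, since it is just the image of $r$ under $R\to R/J(R)\cong F$).
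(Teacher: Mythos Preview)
Your proof is correct. Part (a) is essentially identical to the paper's argument.

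For part (b) you take a genuinely different route. The paper invokes a black-box criterion from Radjavi--Rosenthal: over an algebraically closed field, an $F$-subalgebra $R$ of $\mathbb{M}_{n}(F)$ is simultaneously upper triangularizable if and only if $R/J(R)$ is commutative. Since $R$ is local this is immediate, yielding $U^{-1}RU\subseteq\mathbb{U}_{n}(F)$; the paper then upgrades to $\mathbb{U}_{n}^{\ast}(F)$ by applying (a) to the conjugate $S=U^{-1}RU$ and observing that a nilpotent upper triangular matrix is strictly upper triangular. You instead build the invariant flag by hand: nilpotence of $J(R)$ forces $(0:^{W}J(R))\neq 0$ in any nonzero module $W$, and the decomposition from (a) makes any vector in this annihilator a common eigenvector for all of $R$, with eigenvalue the image of $r$ in $R/J(R)\cong F$. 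Your approach is more elementary and entirely self-contained; the paper's is shorter but leans on an external reference.

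Two small points of polish. First, the sentence ``each basis vector spans a submodule on which $x\in J(R)$ acts as zero'' is not literally true (only the first one does); the correct statement, which you do supply in your final parenthetical, is that $J(R)$ acts as zero on each successive quotient $V_{i}/V_{i-1}$, whence the $(i,i)$ entry of the matrix of $r=a\cdot 1_{R}+x$ is $a$ for every $i$. Second, the Nakayama remark ($WJ(R)\neq W$) is not quite the statement you need; what you use is $(0:^{W}J(R))\neq 0$, which follows directly from nilpotence by taking the last nonzero term of the chain $W\supseteq WJ(R)\supseteq WJ(R)^{2}\supseteq\cdots$.
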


\begin{proof}
(a) Since $R$ is local, it follows that $R/J(R)$ is a division algebra that
is finite dimensional over $F$. Since $F$ is algebraically closed this
implies $R/J(R)\cong F$. Inasmuch as $F\cdot 1_{R}\cap J(R)=0$, the equation

\begin{align*}
\mathrm{dim}_{F}(F\cdot 1_{R}+J(R)) &=1+\mathrm{dim}_{F}J(R) \\[0.5ex]
&=\mathrm{dim}_{F}(R/J(R))+\mathrm{dim}_{F}J(R) \\[0.5ex]
&=\mathrm{dim}_{F}R
\end{align*}

\noindent entails $R=F\cdot 1_{R}\oplus J(R)$.

(b) It is known (see \cite[Theorem 1.4.6, p.~12]{RR12}) that for an
algebraically closed field $F$, a necessary and sufficient condition for an $%
F$-subalgebra $R$ of $\mathbb{M}_{n}(F)$ to be simultaneously upper
triangularizable in $\mathbb{M}_{n}(F)$ is that $R/J(R)$ is commutative, a
condition that is clearly met in our case. Hence $U^{-1}RU\subseteq \mathbb{U%
}_{n}(F)$ for some invertible $U\in\mathbb{M}_{n}(F)$. Putting $S=U^{-1}RU$
we note that since $S$ is local, $S=FI_{n}\oplus J(S)$ by (a). Since every
element of $J(S)$ is a nilpotent matrix in $\mathbb{U}_{n}(F)$, and a
nilpotent upper triangular matrix is strictly upper triangular, we have

\begin{equation*}
U^{-1}RU=S=FI_{n}\oplus J(S)\subseteq \mathbb{U}_{n}^{\ast}(F).
\end{equation*}
\end{proof}


\begin{remark}
(a) The observation that the factor ring $R/J(R)$ is commutative, is key in
the proof of Proposition 10(b). We point out that this property is possessed
by all Lie nilpotent rings. Indeed, \cite[Proposition 3.1(3), p. 4790]{SvW15}
asserts that if $\mathrm{rad}(R)$ denotes the prime radical of a Lie
nilpotent ring $R$, then $R/\mathrm{rad}(R)$ is commutative. Since $\mathrm{%
rad}(R)\subseteq J(R)$, the commutativity of $R/J(R)$ follows.

(b) If $F$ is an algebraically closed field of characteristic zero (the
latter assumption is not made in Proposition 10) and $R$ is any Lie
nilpotent $F$-subalgebra of $\mathbb{M}_{n}(F)$, then $R$ can be shown to be
simultaneously upper triangularizable in $\mathbb{M}_{n}(F)$ as a
consequence of Lie's Theorem which asserts that if ${\mathfrak{g}}$ is a
finite dimensional solvable Lie algebra with representation $\mathbb{M}%
_{n}(F)$, then ${\mathfrak{g}}$ is simultaneously upper triangularizable in $%
\mathbb{M}_{n}(F)$. Lie's Theorem applies inasmuch as every Lie nilpotent
ring is a nilpotent Lie algebra with respect to the commutator, and
nilpotent Lie algebras are solvable. (This latter fact is explained in the
second open question of Section 9.)
\end{remark}


\begin{theorem}
The following statements are equivalent for an algebraically closed field $F$%
:

\begin{list}{{(\alph{standard})}\hfill}{\usecounter{standard}\addtocounter{standard}{0}\setlength{\topsep}{1ex}\setlength{\labelwidth}{1.5\parindent}\setlength{\labelsep}{0.2\parindent}\setlength{\leftmargin}{2.3\parindent}\setlength{\listparindent}{0pt}\setlength{\itemsep}{5pt}}

\item The Conjecture holds in respect of $F$;

\item For all positive integers $m$ and $n$, if $R$ is any {\em local} $F$-subalgebra of $\M_{n}(F)$ with Lie nilpotence index $m$, then
\[
{\rm dim}_{F}R\leqslant M(m+1,n);
\]

\item For all positive integers $m$ and $n$, if $R$ is any $F$-subalgebra of $\U_{n}^{\ast}(F)$ with Lie nilpotence index $m$, then
\[
{\rm dim}_{F}R\leqslant M(m+1,n).
\]

\end{list}
\end{theorem}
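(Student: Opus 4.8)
The plan is to prove the chain of equivalences (a)$\Rightarrow$(b)$\Rightarrow$(c)$\Rightarrow$(a). The implication (a)$\Rightarrow$(b) is immediate, since every local $F$-subalgebra of $\mathbb{M}_n(F)$ is in particular an $F$-subalgebra of $\mathbb{M}_n(F)$, so the Conjecture applied to $F$ yields (b) directly. This mirrors the trivial direction already seen in Theorem 9. The real content lies in the other two implications.

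For (b)$\Rightarrow$(c): let $R$ be an $F$-subalgebra of $\mathbb{U}_n^{\ast}(F)$ with Lie nilpotence index $m$. Since $\mathbb{U}_n^{\ast}(F) = FI_n \oplus N$, where $N$ denotes the strictly upper triangular matrices (a nilpotent ideal), every element of $R$ has the form $aI_n + \nu$ with $a \in F$ and $\nu$ nilpotent. Hence $R/(R \cap N)$ embeds in $F$, so $R$ is local with $J(R) = R \cap N$ consisting of nilpotent matrices. Thus $R$ is a \emph{local} $F$-subalgebra of $\mathbb{M}_n(F)$ with Lie nilpotence index $m$, and (b) applies to give $\mathrm{dim}_F R \leqslant M(m+1,n)$, which is exactly (c). The one point requiring a little care is confirming that $R$ is genuinely local (not merely that $R/J(R)$ is a field) — this follows because a finite dimensional $F$-algebra in which every non-unit is nilpotent is local; alternatively, $R \cap N$ is a nilpotent ideal with $R/(R\cap N) \hookrightarrow F$, forcing $R\cap N = J(R)$ and $R/J(R) \cong F$ or $0$.

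For (c)$\Rightarrow$(b): let $R$ be a \emph{local} $F$-subalgebra of $\mathbb{M}_n(F)$ with Lie nilpotence index $m$. By Proposition 10(b), since $F$ is algebraically closed, there exists an invertible $U \in \mathbb{M}_n(F)$ such that $U^{-1}RU \subseteq \mathbb{U}_n^{\ast}(F)$, and $R \cong U^{-1}RU$ as $F$-algebras. Conjugation is an $F$-algebra isomorphism, so $U^{-1}RU$ has the same dimension and the same Lie nilpotence index $m$ as $R$. Applying (c) to $U^{-1}RU$ gives $\mathrm{dim}_F R = \mathrm{dim}_F(U^{-1}RU) \leqslant M(m+1,n)$, which is (b). Finally, (b)$\Rightarrow$(a) is precisely Theorem 9 ((b)$\Rightarrow$(a)) applied to our field $F$, so we may simply invoke it; this closes the cycle and establishes that all three statements are equivalent.

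I do not anticipate a genuine obstacle here: every step is a direct application of results already in hand (Proposition 10(b), Theorem 9, the structure of $\mathbb{U}_n^{\ast}(F)$). The only place demanding a moment's attention is the passage in (b)$\Rightarrow$(c) verifying that a subalgebra of $\mathbb{U}_n^{\ast}(F)$ is local with radical equal to its intersection with the strictly upper triangular matrices — but this is routine, since that intersection is a nilpotent two-sided ideal whose quotient embeds in the field $F$. Everything else is bookkeeping about isomorphisms preserving dimension and Lie nilpotence index.
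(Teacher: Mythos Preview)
Your proof is correct and follows essentially the same approach as the paper: the equivalence of (a) and (b) is Theorem 9, and the equivalence of (b) and (c) rests on Proposition 10(b) together with the observation that every $F$-subalgebra of $\mathbb{U}_{n}^{\ast}(F)$ is local. The paper is slightly more terse, simply asserting that Proposition 10(b) shows the local $F$-subalgebras of $\mathbb{M}_{n}(F)$ are, up to isomorphism, precisely the $F$-subalgebras of $\mathbb{U}_{n}^{\ast}(F)$, whereas you spell out the locality argument for the (b)$\Rightarrow$(c) direction explicitly.
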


\begin{proof}
(a) and (b) are equivalent by Theorem 9 without any restriction on the field 
$F$.

The equivalence of (b) and (c) is a consequence of Proposition 10(b) which
tells us that up to isomorphism, the local $F$-subalgebras of $\mathbb{M}%
_{n}(F)$ are precisely the $F$-subalgebras of $\mathbb{U}_{n}^{\ast}(F)$.
\end{proof}


\section{Subalgebras of $\mathbb{U}_{n}^{\ast}(F)$}

The main body of theory is developed in this section.

Throughout this section and unless otherwise stated, $F$ shall denote a
field and $R$ an $F$-subalgebra of $\mathbb{U}_{n}^{\ast}(F)$.

Let $V$ be a faithful right $R$-module. We define a sequence $\{R_{k}\}_{k\in%
\mathbb{N}}$ of $F$-subalgebras of $R$, a sequence $\{J_{k}\}_{k\in\mathbb{N}%
}$ where each $J_{k}$ is an ideal of $R_{k}$, and a sequence $\{U_{k}\}_{k\in%
\mathbb{N}}$ of $F$-subspaces of $V$ as follows

\begin{equation*}
\begin{cases}
R_{1}\overset{\mathrm{def}}{=} R, \\[1ex] 
J_{1}\overset{\mathrm{def}}{=} J(R_{1}),\hspace*{0.3em}\makebox{\rm and} \\%
[1ex] 
U_{1}\overset{\mathrm{def}}{=}\hspace*{0.2em}%
\makebox{\rm any $F$-subspace
complement of $VJ_{1}$ in $V$.}%
\end{cases}%
\end{equation*}

\medskip\medskip

\noindent For $k\in\mathbb{N}$, $k\geqslant 2$, define

\begin{align}
\begin{cases}
R_{k}\overset{\mathrm{def}}{=} FI_{n} + (0:^{R_{k-1}}U_{k-1}), \\[1ex] 
J_{k}\overset{\mathrm{def}}{=} J(R_{k}),\hspace*{0.3em}\makebox{\rm and} \\%
[1ex] 
U_{k}\overset{\mathrm{def}}{=}\hspace*{0.2em}%
\parbox[t]{20em}{any $F$-subspace complement of
 $VJ_{1}J_{2}\ldots J_{k}$ in\\[0.2\baselineskip]
 $VJ_{1}J_{2}\ldots J_{k-1}$.}%
\end{cases}%
\end{align}

\medskip

\noindent It follows from the definition of $U_{k}$ that

\begin{align}
VJ_{1}J_{2} \ldots J_{k-1} = U_{k} \oplus VJ_{1}J_{2} \ldots J_{k}
\end{align}

\noindent as $F$-spaces.

For convenience we put $J_{0}=R$.

Since $(0:^{R_{k-1}} U_{k-1})\subseteq R_{k-1}$ and since every $F$%
-subalgebra of $\mathbb{U}_{n}^{\ast}(F)$ contains $FI_{n}$, it is clear
from the definition of $R_{k}$ in (14) that $R_{k-1} \supseteq R_{k}$ for
every $k\in\mathbb{N}$, $k\geqslant 2$. We thus have

\begin{align}
R_{1}\supseteq R_{2}\supseteq\cdots
\end{align}

\noindent It is easily shown that if $S$ and $T$ are any $F$-subalgebras of $%
\mathbb{U}_{n}^{\ast}(F)$, then $S \subseteq T$ if and only if $J(S)
\subseteq J(T)$. In the light of this observation, (16) implies that

\begin{align}
J_{1}\supseteq J_{2}\supseteq\cdots
\end{align}

\noindent Since $J_{k} \subseteq J_{1}$ for all $k\in \mathbb{N}$, and $%
J_{1} $ is nilpotent, we must have $J_{0}J_{1} \ldots J_{k}=0$ for $k$
sufficiently large. Define

\begin{align}
\ell\overset{\mathrm{def}}{=}\min\hspace*{0.1em}\{k\in\mathbb{N}:
J_{0}J_{1}\ldots J_{k}=0\}.
\end{align}

\noindent It follows from (17) that $J_{0}J_{1}\ldots J_{k-1}\supseteq
J_{0}J_{1}\ldots J_{k}$ for each $k\in\mathbb{N}$. We thus have the
descending chain

\begin{equation*}
R=J_{0}\supseteq J_{0}J_{1}\supseteq\cdots\supseteq J_{0}J_{1}\ldots
J_{\ell-1}\supseteq J_{0}J_{1}\ldots J_{\ell}=0.
\end{equation*}

\noindent This, in turn, induces a descending chain

\begin{align}
V = VJ_{0}\supseteq V J_{0}J_{1}\supseteq\cdots\supseteq VJ_{0}J_{1}\ldots
J_{\ell-1}\supseteq 0.
\end{align}

\noindent Note that $VJ_{0}J_{1} \ldots J_{\ell-1} \neq 0$ since $J_{0}J_{1}
\ldots J_{\ell-1} \neq 0$ and $V$ is a faithful right $R$-module.

Recall that if $R$ is an arbitrary ring, then a submodule $N$ of a right $R$%
-module $M$ is said to be \emph{superfluous} if

\begin{equation*}
\forall L\leqslant M, \,N+L=M\Rightarrow L=M.
\end{equation*}


\begin{lemma}
If $I$ is a nilpotent ideal of an arbitrary ring $R$ and $M$ is any right $R$%
-module, then $MI$ is a superfluous submodule of $M$.
\end{lemma}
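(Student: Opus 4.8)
The plan is to use nilpotence of $I$ to reduce the claim to the classical Nakayama-style statement that $MI^{k}$ being superfluous in $M$ follows by induction on $k$, the base case $k=1$ being exactly what we want after a slight reorganisation. Concretely, suppose $L\leqslant M$ satisfies $MI+L=M$. Multiplying on the right by $I$ gives $MI^{2}+LI=MI$, and since $LI\leqslant L$ we get $MI^{2}+L=M$. Iterating, $MI^{k}+L=M$ for every $k\in\mathbb{N}$. Now invoke nilpotence: since $I$ is a nilpotent ideal there is some $k$ with $I^{k}=0$, hence $MI^{k}=0$, and the equation $MI^{k}+L=M$ collapses to $L=M$. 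This establishes that $MI$ is superfluous.

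First I would state the one-line identity $MI+L=M\Rightarrow MI^{2}+L=M$ carefully: right-multiplying the submodule equation $MI+L=M$ by $I$ is legitimate because $I$ is a right ideal (indeed a two-sided ideal), so $(MI+L)I=MI$; and $(MI+L)I=MI\cdot I+LI=MI^{2}+LI\subseteq MI^{2}+L$. Combined with $MI^{2}+LI\supseteq$ nothing automatically, one argues instead: from $MI+L=M$ we have $MI=(MI+L)I=MI^{2}+LI\subseteq MI^{2}+L$, so $M=MI+L\subseteq MI^{2}+L\subseteq M$, forcing $MI^{2}+L=M$. Then an easy induction on $k$ gives $MI^{k}+L=M$ for all $k\geqslant 1$.

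The only genuine ingredient beyond bookkeeping is the existence of $k$ with $I^{k}=0$, which is precisely the hypothesis that $I$ is nilpotent; no finiteness or artinian assumption on $R$ or $M$ is needed, and the field $F$ plays no role here. I do not anticipate any real obstacle: the argument is a direct, self-contained Nakayama-type computation, and the main point to get right is simply the direction of the containments when multiplying the submodule equation by $I$.
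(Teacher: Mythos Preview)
Your proof is correct and follows essentially the same approach as the paper: assume $MI+L=M$, multiply by $I$ and use $LI\subseteq L$ to obtain $MI^{2}+L=M$, iterate to $MI^{k}+L=M$, and conclude $L=M$ once $I^{k}=0$. Your second paragraph's careful handling of the containments is a slightly more explicit version of the paper's one-line ``Multiplying by $I$ we obtain $MI^{2}+LI=MI$, so $MI^{2}+LI+L=MI+L=M$,'' but the substance is identical.
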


\begin{proof}
Suppose $MI+L=M$ with $L\leqslant M$. Multiplying by $I$ we obtain $%
MI^{2}+LI=MI$, so $MI^{2}+LI+L=MI+L=M$. Continuing in this way, we obtain $%
MI^{k}+L=M$ for all $k\in\mathbb{N}$. Since $I$ is nilpotent this yields,
for $k$ sufficiently large, the equation $MI^{k}+L=M\cdot 0+L=L=M$.
\end{proof}

Important properties of the chain (19) are established in the next lemma.


\begin{lemma}
Let the sequences $\{R_{k}\}_{k\in\mathbb{N}}$, $\{J_{k}\}_{k\in\mathbb{N}}$
and $\{U_{k}\}_{k\in\mathbb{N}}$ be defined as in (14), and positive integer 
$\ell$ defined as in (18). Let $k \in \{1,2,\ldots,\ell\}$. Then:

\begin{list}{{(\alph{standard})}\hfill}{\usecounter{standard}\addtocounter{standard}{0}\setlength{\topsep}{1ex}\setlength{\labelwidth}{1.5\parindent}\setlength{\labelsep}{0.2\parindent}\setlength{\leftmargin}{2.3\parindent}\setlength{\listparindent}{0pt}\setlength{\itemsep}{5pt}}

\item $VJ_{0}J_{1} \ldots J_{k}$ is a superfluous $R_{k}$-submodule of $VJ_{0}J_{1} \ldots J_{k-1}$;

\item $U_{k}R_{k}=VJ_{0}J_{1} \ldots J_{k-1}=U_{k}\oplus \cdots \oplus U_{\ell}$;

\item $J_{k+1}=(0:^{R_{k}}U_{k})$;

\item $VJ_{0}J_{1} \ldots J_{k-1}$ is a faithful right $R_{k}$-module.
\end{list}
\end{lemma}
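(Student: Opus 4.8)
The plan is to prove (a)--(d) simultaneously by induction on $k$, since the four statements are tightly interlocked: the module $VJ_{0}J_{1}\ldots J_{k-1}$ over $R_{k}$ plays for the $k$th stage exactly the role that $V=VJ_{0}$ over $R_{1}=R$ plays for the first stage, and the recursive definitions in (14) are precisely engineered so that this self-similarity propagates. The base case $k=1$ should be essentially definitional: $J_{1}=J(R_{1})=J(R)$ is a nilpotent ideal of $R$, so (a) is immediate from Lemma 13 applied with $I=J_{1}$ and $M=V=VJ_{0}$; faithfulness in (d) is part of the hypothesis on $V$; for (c) with $k=1$ one needs $J_{2}=(0:^{R_{1}}U_{1})$, which follows by unpacking the definition of $R_{2}=FI_{n}+(0:^{R_{1}}U_{1})$ together with the fact that $FI_{n}$ is the semisimple part and $(0:^{R_{1}}U_{1})$ the nilpotent part, so $J(R_{2})=(0:^{R_{1}}U_{1})$; and (b) for $k=1$ says $U_{1}R_{1}=V$, with $V=U_{1}\oplus VJ_{1}$ and the superfluousness of $VJ_{1}$ forcing $U_{1}R_{1}=V$.

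\textbf{The inductive step.} Assume (a)--(d) hold for $k$ (with $k<\ell$); I would prove them for $k+1$. First, from (d) at level $k$, $VJ_{0}J_{1}\ldots J_{k-1}$ is a faithful $R_{k}$-module, and $J_{k}=J(R_{k})$ is nilpotent, so $J_{k+1}$, defined via $R_{k+1}=FI_{n}+(0:^{R_{k}}U_{k})$, equals $(0:^{R_{k}}U_{k})$ by the same semisimple-plus-nilpotent decomposition argument as in the base case; this gives (c) at level $k+1$ once one checks $k+1\le\ell$, i.e. that the relevant annihilator really is the radical of $R_{k+1}$ and is nilpotent (it sits inside the nilpotent $J_{1}$). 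For (a) at level $k+1$: $(VJ_{0}\ldots J_{k-1})J_{k}=VJ_{0}\ldots J_{k}$, and since $J_{k}$ is a nilpotent ideal of $R_{k}$, Lemma 13 shows $VJ_{0}\ldots J_{k}$ is superfluous in $VJ_{0}\ldots J_{k-1}$ \emph{as an $R_{k}$-module}; but then I must descend to $R_{k+1}\subseteq R_{k}$, which is where a small argument is needed---superfluousness over a subring requires knowing that $VJ_{0}\ldots J_{k-1}$ is generated over $R_{k+1}$ in a controlled way, and this is exactly what (b) at level $k+1$ supplies. For (b) at level $k+1$: by definition $U_{k+1}\oplus VJ_{0}\ldots J_{k+1}=VJ_{0}\ldots J_{k}$ (this is (15), noting $J_{0}=R$ acts as identity so $VJ_{0}\ldots J_{k}=VJ_{1}\ldots J_{k}$), and one shows $U_{k+1}R_{k+1}=VJ_{0}\ldots J_{k}$ using that $VJ_{0}\ldots J_{k+1}=(VJ_{0}\ldots J_{k})J_{k+1}$ is superfluous together with $U_{k+1}R_{k+1}+VJ_{0}\ldots J_{k+1}\supseteq U_{k+1}+VJ_{0}\ldots J_{k+1}=VJ_{0}\ldots J_{k}$; iterating the direct-sum decomposition down to level $\ell$ gives $VJ_{0}\ldots J_{k}=U_{k+1}\oplus\cdots\oplus U_{\ell}$. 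Finally (d) at level $k+1$: $R_{k+1}/J_{k+1}\cong FI_{n}$ is a field, and $R_{k+1}=FI_{n}\oplus J_{k+1}$; faithfulness of $VJ_{0}\ldots J_{k}$ over $R_{k+1}$ amounts to showing that if $x\in R_{k+1}$ kills $VJ_{0}\ldots J_{k}$ then $x=0$---write $x=\lambda I_{n}+j$ with $j\in J_{k+1}=(0:^{R_{k}}U_{k})$; then $j$ already kills $U_{k}$, hence by (b) at level $k$ it kills $U_{k}R_{k}=VJ_{0}\ldots J_{k-1}\supseteq VJ_{0}\ldots J_{k}$, forcing $\lambda I_{n}$ to kill $VJ_{0}\ldots J_{k}\ne 0$, so $\lambda=0$ and then $j$ kills $VJ_{0}\ldots J_{k-1}$, so by (d) at level $k$, $j=0$.

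\textbf{The main obstacle.} The routine parts are the nilpotency/superfluousness bookkeeping and the semisimple-plus-nilpotent identification of $J(R_{k})$; these are Lemma 13 plus linear algebra. The genuinely delicate point is the interplay in part (b)---specifically, transferring superfluousness and the generating property from the ambient ring $R_{k}$ down to the smaller ring $R_{k+1}$, and correctly threading the telescoping direct-sum decomposition $U_{k}R_{k}=U_{k}\oplus\cdots\oplus U_{\ell}$ all the way out to the terminal index $\ell$ so that the induction closes. I expect one must be careful that the chain (19) is strictly the right length, i.e. that $VJ_{0}\ldots J_{\ell-1}\ne 0$ (already noted after (19)) so that the final $U_{\ell}$ is nontrivial and the decomposition in (b) does not collapse prematurely; verifying that the recursion indeed terminates at exactly $\ell$ and that $k$ ranging over $\{1,\ldots,\ell\}$ is the correct range is the subtle combinatorial heart of the lemma. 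Everything else should follow by a clean simultaneous induction.
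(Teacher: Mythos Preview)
Your simultaneous-induction scheme overcomplicates the argument and creates a genuine error in part (d). In the paper, only (d) uses induction; parts (a), (b), (c) are each proved directly for arbitrary $k\in\{1,\ldots,\ell\}$ in a single pass. For (a), one simply observes that $VJ_{0}\ldots J_{k-1}$ is an $R_{k}$-module (since $R_{k}\subseteq R_{k-1}$) and $J_{k}$ is a nilpotent ideal of $R_{k}$, so Lemma~13 gives superfluousness of $(VJ_{0}\ldots J_{k-1})J_{k}$ as an $R_{k}$-submodule immediately; there is no need to ``descend to $R_{k+1}$'', and the ``main obstacle'' you identify is a phantom. Part (b) then follows from (a) together with (15), and (c) from (b) since $U_{k}\neq 0$ forces $(0:^{R_{k}}U_{k})$ to be a proper right ideal consisting of strictly upper triangular matrices. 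Incidentally, your inductive step for (a) has the indices shifted: what you wrote is the statement at level $k$, not $k+1$.

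The serious gap is in your argument for (d). You assert that $j\in J_{k+1}=(0:^{R_{k}}U_{k})$ implies $j$ annihilates $U_{k}R_{k}=VJ_{0}\ldots J_{k-1}$. But $U_{k}j=0$ does \emph{not} yield $(U_{k}R_{k})j=0$: the annihilator $(0:^{R_{k}}U_{k})$ is only a \emph{right} ideal of $R_{k}$, and $U_{k}$ is an $F$-subspace complement, not an $R_{k}$-submodule, so for $r\in R_{k}$ there is no reason $U_{k}r\subseteq U_{k}$ and hence no reason $u r j=0$. Without this step you cannot isolate the scalar part $\lambda I_{n}$, and the argument collapses. The paper's route avoids $U_{k}R_{k}$ entirely: if $t\in R_{k}$ kills the nonzero module $VJ_{0}\ldots J_{k-1}$, then $t$ is a non-unit of the local ring $R_{k}$, so $t\in J_{k}=(0:^{R_{k-1}}U_{k-1})$ and hence $U_{k-1}t=0$; combining this with the hypothesis and the \emph{additive} decomposition (15), $t$ kills $U_{k-1}+VJ_{0}\ldots J_{k-1}=VJ_{0}\ldots J_{k-2}$, and faithfulness at level $k-1$ gives $t=0$.
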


\begin{proof}
(a) That $VJ_{0}J_{1}\ldots J_{k}$ is a right $R_{k}$-module is a
consequence of the fact that $J_{0}J_{1}\ldots J_{k}$ is an $F$-subspace of $%
R$ that is closed under right multiplication by elements from $R_{k}$.

Since $R_{k}\subseteq R_{k-1}$, every right $R_{k-1}$-module is canonically
a right $R_{k}$-module. In particular, $VJ_{0}J_{1}\ldots J_{k-1}$ is a
right $R_{k}$-module.

It remains to show that $VJ_{0}J_{1}\ldots J_{k}$ is superfluous in $%
VJ_{0}J_{1}\ldots J_{k-1}$. Put $U=VJ_{0}J_{1}\ldots J_{k-1}$. Since $%
J_{k}\subseteq J_{1}$ and $J_{1}$ is nilpotent, $J_{k}$ must also be
nilpotent. It follows from Lemma 13 that $UJ_{k}$ is a superfluous submodule
of $U$, as required.

(b) Since $U_{k}R_{k}\supseteq U_{k}$, it follows from (15) that

\begin{equation*}
VJ_{0}J_{1}\ldots J_{k-1}=U_{k}R_{k}+VJ_{0}J_{1}\ldots J_{k}
\end{equation*}

\noindent where the right-hand-side of the above equation is a sum of $R_{k}$%
-submodules of $VJ_{0}J_{1}\ldots J_{k-1}$. Since $VJ_{0}J_{1}\ldots J_{k}$
is a superfluous $R_{k}$-submodule of $VJ_{0}J_{1}\ldots J_{k-1}$ by (a), we
must have $U_{k}R_{k}=VJ_{0}J_{1}\ldots J_{k-1}$.

To establish the equation $VJ_{0}J_{1}\ldots J_{k-1}=U_{k}\oplus\cdots\oplus
U_{\ell}$, we note first that the $U_{i}$ constitute an independent family
of $F$-subspaces of $V$. This is clear from the definition of the $U_{i}$ in
(14). This means that the sum $U_{k}\oplus\cdots\oplus U_{\ell}$ is indeed a
direct sum of $F$-subspaces. It remains to establish equality.

Since, by (14), $U_{\ell}$ is an $F$-subspace complement of $%
VJ_{0}J_{1}\ldots J_{\ell}$ in $VJ_{0}J_{1}\ldots J_{\ell-1}$, and since $%
VJ_{0}J_{1}\ldots J_{\ell}=0$ by definition of $\ell$, we must have

\begin{equation*}
VJ_{0}J_{1}\ldots J_{\ell-1}=U_{\ell}.
\end{equation*}

\noindent Repeated application of the formula for $U_{k}$ in (14) shows that 
\begin{equation*}
VJ_{0}J_{1}\ldots J_{\ell-2}=U_{\ell-1}\oplus U_{\ell},
\end{equation*}

\noindent and, more generally, that

\begin{equation*}
VJ_{0}J_{1}\ldots J_{k-1}=U_{k}\oplus\cdots\oplus U_{\ell},
\end{equation*}

\noindent as required.

(c) Since $k-1<\ell$, it follows from (b) and the minimality of $\ell$ that $%
U_{k}R_{k}=VJ_{0}J_{1}\ldots J_{k-1}\neq 0$, whence $U_{k}\neq 0$. This
means that $(0:^{R_{k}}U_{k})$ must be a proper right ideal of $R_{k}$ and
so cannot contain any units of~$R_{k}$. Inasmuch as $R_{k}$ is an $F$%
-subalgebra of $\mathbb{U}_{n}^{\ast}(F)$, $(0:^{R_{k}}U_{k})$ must
therefore comprise strictly upper triangular matrices. Since, by (14), $%
R_{k+1}=FI_{n}+(0:^{R_{k}}U_{k})$, we must have $%
J_{k+1}=J(R_{k+1})=(0:^{R_{k}}U_{k})$.

(d) We use induction on $k$. Take $k=1$. Then $VJ_{0}J_{1}\ldots
J_{k-1}=VJ_{0}=V$, which is a faithful $R_{1}$-module by hypothesis. This
establishes the base case.

To establish the inductive step, take $t\in R_{k}$ with $k\geqslant 2$ and
suppose

\begin{align}
(VJ_{0}J_{1}\ldots J_{k-1})t=0.
\end{align}

\noindent Since $VJ_{0}J_{1}\ldots J_{k-1}\neq 0$, $t$ cannot be a unit of $%
R_{k}$, and since $R_{k}$ is local, we must have $t\in J_{k}$. By (c), $%
J_{k}=(0:^{R_{k-1}}U_{k-1})$, so

\begin{align}
U_{k-1}t=0.
\end{align}

\noindent We thus have

\begin{align*}
(VJ_{0}J_{1}\ldots J_{k-2})t &=(U_{k-1}+VJ_{0}J_{1}\ldots J_{k-1})t\hspace*{%
0.6em}\makebox{\rm [by (15)]} \\[0.5ex]
&=0\hspace*{0.6em}\makebox{\rm [by (20) and (21)].}
\end{align*}

\noindent By the inductive hypothesis, $VJ_{0}J_{1}\ldots J_{k-2}$ is a
faithful right $R_{k-1}$-module. Since $t\in J_{k}\subseteq R_{k}\subseteq
R_{k-1}$, the above equation entails $t=0$. We conclude that $%
VJ_{0}J_{1}\ldots J_{k-1}$ is a faithful $R_{k}$-module.
\end{proof}


\begin{remark}
(a) Taking $k=1$ in Lemma 14(b) yields the $F$-subspace decomposition

\begin{align}
V=U_{1}\oplus U_{2}\oplus\cdots\oplus U_{\ell}.
\end{align}

\noindent Substituting the equation $VJ_{0}J_{1}\ldots
J_{k-1}=U_{k}\oplus\cdots\oplus U_{\ell}$ of Lemma 14(b) into (22) yields

\begin{align}
V=U_{1}\oplus\cdots\oplus U_{k-1}\oplus VJ_{0}J_{1}\ldots J_{k-1}.
\end{align}

(b) The faithfulness of $VJ_{0}J_{1}\ldots J_{k-1}$ proved in Lemma 14(d)
means that $(VJ_{0}J_{1}\ldots J_{k-1})J_{k}=0$ if and only if $J_{k}=0$.
Moreover, since $V$ is faithful as a right $R$-module, we have that

\begin{equation*}
VJ_{0}J_{1}\ldots J_{k}=0 \,\Leftrightarrow\, J_{0}J_{1}\ldots J_{k}=0.
\end{equation*}

\noindent It follows that 
\begin{equation*}
J_{0}J_{1}\ldots J_{k}=0 \,\Leftrightarrow\, J_{k}=0.
\end{equation*}

\noindent This has the consequence that

\begin{equation*}
\ell=\min\hspace*{0.1em}\{k\in\mathbb{N}: J_{0}J_{1}\ldots J_{k}=0\}=\min%
\hspace*{0.1em}\{k\in\mathbb{N}: J_{k}=0\}.
\end{equation*}
\end{remark}


\begin{proposition}
Let the sequences $\{R_{k}\}_{k\in\mathbb{N}}$, $\{J_{k}\}_{k\in\mathbb{N}}$
and $\{U_{k}\}_{k\in\mathbb{N}}$ be defined as in (14), and positive integer 
$\ell$ defined as in (18). Then

\begin{equation*}
R=R_{1}\supset R_{2}\supset\cdots\supset R_{\ell}=R_{\ell+1}=\cdots
\end{equation*}

\noindent is a strictly descending chain of $F$-subalgebras of $\mathbb{U}%
_{n}^{\ast}(F)$ that stabilizes at $R_{\ell}$. Moreover, $J_{\ell}=0$, so
that $R_{\ell}=FI_{n}$.
\end{proposition}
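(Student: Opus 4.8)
The plan is to prove the three assertions of Proposition 17 in turn, exploiting the machinery of Lemma 14. First, the chain
\[
R=R_{1}\supseteq R_{2}\supseteq\cdots
\]
is already recorded in (16), so the content to be added is that the inclusions are \emph{strict} up to the index $\ell$, that the chain \emph{stabilizes} at $R_{\ell}$, and that $J_{\ell}=0$ (whence $R_{\ell}=FI_{n}$). I would begin with the last of these, since it drives everything else: by Remark 16(b) we have $\ell=\min\{k\in\mathbb{N}:J_{k}=0\}$, so by definition of $\ell$ we get $J_{\ell}=0$ immediately, and $J_{k}\neq 0$ for all $k<\ell$. Since $R_{\ell}=FI_{n}+J_{\ell}=FI_{n}+0=FI_{n}$, the final sentence of the proposition is done. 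Note also that for $k\geqslant\ell$ the same minimality gives $J_{k}=0$ (using the descending chain (17)), hence $R_{k}=FI_{n}$ for all $k\geqslant\ell$; this yields the stabilization $R_{\ell}=R_{\ell+1}=\cdots$.

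The remaining point is strictness of $R_{1}\supset R_{2}\supset\cdots\supset R_{\ell}$, i.e. $R_{k}\neq R_{k+1}$ for each $k\in\{1,2,\ldots,\ell-1\}$. Here I would use the standard fact, quoted in the text just after (17), that for $F$-subalgebras $S,T$ of $\mathbb{U}_{n}^{\ast}(F)$ one has $S\subseteq T$ iff $J(S)\subseteq J(T)$, so $S=T$ iff $J(S)=J(T)$. Thus it suffices to show $J_{k}\neq J_{k+1}$, equivalently $J_{k}\supsetneq J_{k+1}$ in view of (17). Suppose for contradiction that $J_{k}=J_{k+1}$ for some $k<\ell$. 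By Lemma 14(c), $J_{k+1}=(0:^{R_{k}}U_{k})$, so the hypothesis $J_{k}=J_{k+1}$ reads $U_{k}J_{k}=0$. But then $VJ_{0}J_{1}\ldots J_{k}=(U_{k}R_{k})J_{k}=U_{k}(R_{k}J_{k})$; since $J_{k}$ is an ideal of $R_{k}$ we have $R_{k}J_{k}\subseteq J_{k}$, so this is contained in $U_{k}J_{k}=0$, forcing $J_{0}J_{1}\ldots J_{k}=0$ (as $V$ is faithful), contradicting the minimality of $\ell$ since $k<\ell$. Hence $J_{k}\supsetneq J_{k+1}$ and $R_{k}\supsetneq R_{k+1}$, as required.

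The one step deserving a little care—and the likeliest place to stumble—is the identity $VJ_{0}J_{1}\ldots J_{k}=U_{k}(R_{k}J_{k})$ used above: it rests on Lemma 14(b), which gives $U_{k}R_{k}=VJ_{0}J_{1}\ldots J_{k-1}$, together with the observation that right-multiplying this equality by $J_{k}$ yields $VJ_{0}J_{1}\ldots J_{k-1}J_{k}=U_{k}R_{k}J_{k}$. One must check that $J_{0}J_{1}\ldots J_{k-1}J_{k}=J_{0}J_{1}\ldots J_{k}$ as subsets of $R$ (this is literal, being the same product), and that the manipulations take place inside $R_{k}$-modules so that associativity of the action applies; both are routine. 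Once this is in hand, combined with $R_{k}J_{k}\subseteq J_{k}$ (since $J_{k}\trianglelefteq R_{k}$) and the supposed $U_{k}J_{k}=0$, the contradiction with minimality of $\ell$ is immediate, and the proof is complete.
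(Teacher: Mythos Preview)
Your argument is correct and follows essentially the same route as the paper: both proofs hinge on deducing $U_{k}J_{k}=0$ from the assumption that the chain fails to be strict at stage $k$, then using $VJ_{0}J_{1}\cdots J_{k}=(U_{k}R_{k})J_{k}=U_{k}J_{k}=0$ (via Lemma~14(b),(c)) to contradict the minimality of $\ell$. The only cosmetic differences are the order in which you treat the three assertions and your use of the $S=T\Leftrightarrow J(S)=J(T)$ equivalence to pass directly to $J_{k}$ versus $J_{k+1}$, whereas the paper argues from $R_{k}=R_{k+1}$ and obtains stabilization by observing that $FI_{n}$ is the smallest $F$-subalgebra of $\mathbb{U}_{n}^{\ast}(F)$.
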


\begin{proof}
Suppose $R_{k}=R_{k+1}$ for some $k\leqslant\ell$. Note that we cannot have $%
U_{k}=0$ since this would imply, by Lemma 14(b), that $VJ_{0}J_{1}\ldots
J_{k-1}=0$, which contradicts the fact that $VJ_{0}J_{1}\ldots
J_{k-1}\supseteq VJ_{0}J_{1}\ldots J_{\ell-1}\neq 0$. Now

\begin{align*}
0 &=U_{k}J_{k+1}\hspace*{0.6em}%
\makebox{\rm [because
$J_{k+1}=(0:^{R_{k}}U_{k})$ by Lemma 14(c)]} \\[1ex]
&=U_{k}J_{k}\hspace*{0.6em}%
\makebox{\rm [because
$J_{k}=J(R_{k})=J(R_{k+1})=J_{k+1}$ by hypothesis]} \\[1ex]
&=(U_{k}R_{k})J_{k}\hspace*{0.6em}%
\makebox{\rm [because $J_{k}$ is an ideal
of $R_{k}$]} \\[1ex]
&=(VJ_{0}J_{1}\ldots J_{k-1})J_{k}\hspace*{0.6em}%
\makebox{\rm [by Lemma
14(b)].}
\end{align*}

\noindent Since $k\leqslant\ell$ it follows from the minimality of $\ell$
that $k=\ell$. We have thus proven that $R_{k}\supset R_{k+1}$ for $%
k\in\{1,2,\ldots,\ell-1\}$.

In Remark 15(b) we noted that $J_{\ell}=0$. Since $R_{\ell}\subseteq\mathbb{U%
}_{n}^{\ast}(F)$, this entails $R_{\ell}=FI_{n}$. However, since every $F$%
-subalgebra of $\mathbb{U}_{n}^{\ast}(F)$ contains $FI_{n}$, the descending
chain of $F$-subalgebras must stabilize at $R_{\ell}$.
\end{proof}

Let the sequences $\{R_{k}\}_{k\in\mathbb{N}}$, $\{J_{k}\}_{k\in\mathbb{N}}$
and $\{U_{k}\}_{k\in\mathbb{N}}$ be defined as in (14), and positive integer 
$\ell$ defined as in (18). For each $k\in\{1,2,\ldots,\ell\}$ define

\begin{align}
d_{k}\overset{\mathrm{def}}{=}\mathrm{dim}_{F}U_{k}.
\end{align}

A key step in the proof of Theorem 17(c) below is inspired by \cite[2. Proof
of Schur's Inequality, p. 558]{Gu76}.


\begin{theorem}
Let the sequences $\{R_{k}\}_{k\in\mathbb{N}}$, $\{J_{k}\}_{k\in\mathbb{N}}$
and $\{U_{k}\}_{k\in\mathbb{N}}$ be defined as in (14), positive integer $%
\ell$ defined as in (18), and $\{d_{k}:1\leqslant k\leqslant \ell\}$ defined
as in (24). Then:

\begin{list}{{(\alph{standard})}\hfill}{\usecounter{standard}\addtocounter{standard}{0}\setlength{\topsep}{1ex}\setlength{\labelwidth}{1.5\parindent}\setlength{\labelsep}{0.2\parindent}\setlength{\leftmargin}{2.3\parindent}\setlength{\listparindent}{0pt}\setlength{\itemsep}{5pt}}

\item ${\displaystyle{\rm dim}_{F}(U_{k}J_{k})={\rm dim}_{F}V-\sum_{i=1}^{k}d_{i}}$ for each $k\in\{1,2,\ldots,\ell\}$.

\item ${\displaystyle{\rm dim}_{F}V=\sum_{i=1}^{\ell}d_{i}}$.\\[0.5ex]


\item ${\rm dim}_{F}R \leqslant M(\ell,{\rm dim}_{F}V)$.
\end{list}
\end{theorem}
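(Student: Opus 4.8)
The plan is to dispatch (b) first, as it is essentially immediate: the $F$-space decomposition $V=U_{1}\oplus U_{2}\oplus\cdots\oplus U_{\ell}$ recorded in Remark 15(a) (equation (22)) gives $\mathrm{dim}_{F}V=\sum_{i=1}^{\ell}\mathrm{dim}_{F}U_{i}=\sum_{i=1}^{\ell}d_{i}$ on taking dimensions.

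For (a) I would first observe that, since $J_{k}$ is an ideal of $R_{k}$ and $1_{R_{k}}\in R_{k}$, one has $U_{k}J_{k}=U_{k}R_{k}J_{k}$; combining this with the identity $U_{k}R_{k}=VJ_{0}J_{1}\cdots J_{k-1}$ from Lemma 14(b) gives $U_{k}J_{k}=VJ_{0}J_{1}\cdots J_{k}$. Applying the $F$-space decomposition (23) with $k+1$ in place of $k$, namely $V=U_{1}\oplus\cdots\oplus U_{k}\oplus VJ_{0}J_{1}\cdots J_{k}$, and passing to dimensions while using (b), I obtain $\mathrm{dim}_{F}(U_{k}J_{k})=\mathrm{dim}_{F}(VJ_{0}\cdots J_{k})=\mathrm{dim}_{F}V-\sum_{i=1}^{k}d_{i}$. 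The case $k=\ell$ is consistent with this, since then both sides vanish ($VJ_{0}\cdots J_{\ell}=0$ and $J_{\ell}=0$).

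Part (c) is the substantive assertion, and I would prove it in the spirit of Gustafson's treatment of Schur's inequality, by filtering $R$ along the chain $R\supseteq J_{1}\supseteq J_{2}\supseteq\cdots\supseteq J_{\ell}=0$ (the last equality by Proposition 16) and estimating each successive quotient. Since $R$ is an $F$-subalgebra of $\mathbb{U}_{n}^{\ast}(F)$ it is local with $R=FI_{n}\oplus J_{1}$, so $\mathrm{dim}_{F}R=1+\mathrm{dim}_{F}J_{1}=1+\sum_{k=1}^{\ell-1}\mathrm{dim}_{F}(J_{k}/J_{k+1})$ by telescoping. For each $k\in\{1,\ldots,\ell-1\}$ I would introduce the $F$-linear evaluation map $\theta_{k}\colon J_{k}\to\mathrm{Hom}_{F}(U_{k},V)$, $a\mapsto(u\mapsto ua)$. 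Its kernel is $(0:^{J_{k}}U_{k})=J_{k}\cap(0:^{R_{k}}U_{k})=J_{k}\cap J_{k+1}=J_{k+1}$, using Lemma 14(c) together with the containment (17); and since $U_{k}\subseteq VJ_{0}J_{1}\cdots J_{k-1}$ by Lemma 14(b), every $a\in J_{k}$ satisfies $U_{k}a\subseteq VJ_{0}\cdots J_{k-1}J_{k}=VJ_{0}\cdots J_{k}=U_{k+1}\oplus\cdots\oplus U_{\ell}$, so the image of $\theta_{k}$ lies in $\mathrm{Hom}_{F}(U_{k},U_{k+1}\oplus\cdots\oplus U_{\ell})$. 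Hence $\mathrm{dim}_{F}(J_{k}/J_{k+1})\leqslant d_{k}(d_{k+1}+\cdots+d_{\ell})$, and summing over $k$ gives $\mathrm{dim}_{F}J_{1}\leqslant\sum_{1\leqslant i<j\leqslant\ell}d_{i}d_{j}=\frac{1}{2}\big((\mathrm{dim}_{F}V)^{2}-\sum_{i=1}^{\ell}d_{i}^{2}\big)$, the last equality being the identity $(\sum d_{i})^{2}=\sum d_{i}^{2}+2\sum_{i<j}d_{i}d_{j}$ combined with (b). Therefore $\mathrm{dim}_{F}R\leqslant\frac{1}{2}\big((\mathrm{dim}_{F}V)^{2}-\sum_{i=1}^{\ell}d_{i}^{2}\big)+1$, and since $(d_{1},\ldots,d_{\ell})$ is a tuple of nonnegative integers with $\sum_{i=1}^{\ell}d_{i}=\mathrm{dim}_{F}V$, this bound is at most $M(\ell,\mathrm{dim}_{F}V)$ by the very definition (6) of $M$.

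The crux, I expect, is establishing the two properties of $\theta_{k}$ in (c): that its kernel is precisely $J_{k+1}$, and that its image is confined to homomorphisms valued in $U_{k+1}\oplus\cdots\oplus U_{\ell}$ rather than in all of $V$. Both are extracted from the carefully engineered relations of Lemma 14, and it is exactly these that make the refined block structure (and hence the function $M$) emerge; the remaining ingredients --- the telescoping dimension count, the quadratic identity, and the appeal to the definition of $M$ --- are routine.
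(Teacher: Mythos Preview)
Your proposal is correct and follows essentially the same approach as the paper: the same decomposition $V=U_{1}\oplus\cdots\oplus U_{k}\oplus U_{k}J_{k}$ (derived via $U_{k}J_{k}=U_{k}R_{k}J_{k}=VJ_{0}\cdots J_{k}$) handles (a) and (b), and for (c) the same evaluation map $J_{k}\to\mathrm{Hom}_{F}(U_{k},U_{k}J_{k})$ with kernel $J_{k+1}$ yields the recursive bound, which the paper then telescopes exactly as you do. The only cosmetic differences are that the paper names the codomain $\mathrm{Hom}_{F}(U_{k},U_{k}J_{k})$ and invokes (a) for its dimension, whereas you write it as $\mathrm{Hom}_{F}(U_{k},U_{k+1}\oplus\cdots\oplus U_{\ell})$ directly via Lemma~14(b); these are the same space.
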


\begin{proof}
(a) Inasmuch as

\begin{align*}
V &=U_{1}\oplus\cdots\oplus U_{k}\oplus VJ_{0}J_{1}\ldots J_{k}\hspace*{0.6em%
}\makebox{\rm [by (23)]} \\[0.5ex]
&=U_{1}\oplus\cdots\oplus U_{k}\oplus (VJ_{0}J_{1}\ldots J_{k-1})J_{k} \\%
[0.5ex]
&=U_{1}\oplus\cdots\oplus U_{k}\oplus (U_{k}R_{k})J_{k}\hspace*{0.6em}%
\makebox{\rm [by Lemma 14(b)]} \\[0.5ex]
&=U_{1}\oplus\cdots\oplus U_{k}\oplus U_{k}J_{k}\hspace*{0.6em}%
\makebox{\rm
[because $J_{k}$ is an ideal of $R_{k}$],}
\end{align*}

\noindent we have $\mathrm{dim}_{F}V=d_{1}+\cdots+d_{k}+\mathrm{dim}%
_{F}(U_{k}J_{k})$, from which (a) follows.

(b) is an immediate consequence of (22) and (24).

(c) If $\ell=1$, then $J=J_{\ell}=0$, so $\mathrm{dim}_{F}R=1=M(1,\mathrm{dim%
}_{F}V)$ and there is nothing further to prove. Suppose $\ell\geqslant 2$.

We next derive the recursive formula

\begin{align}
\mathrm{dim}_{F}J_{k}\leqslant d_{k}\left(\mathrm{dim}_{F}V-%
\sum_{i=1}^{k}d_{i}\right)+\mathrm{dim}_{F}J_{k+1}\hspace*{0.6em}(1\leqslant
k\leqslant\ell).
\end{align}

\noindent To this end, take $k\in\{1,2,\ldots,\ell\}$, $X\in J_{k}$ and let $%
\rho_{X}:U_{k}\rightarrow U_{k}J_{k}$ be the right multiplication by $X$
map. Observe that $\rho_{X}$ is an $F$-linear map and thus a member of $%
\mathrm{Hom}_{F}(U_{k},U_{k}J_{k})$.

Define the map $\Theta: J_{k} \rightarrow \mathrm{Hom}_{F}(U_{k},U_{k}J_{k})$
by $\Theta(X)=\rho_{X}$. It is also easily seen that $\Theta$ is an $F$%
-linear map. Note that

\begin{align}
\mathrm{Ker}\,\Theta & = \{X\in J_{k}:\rho_{X}=0\}  \notag \\[0.5ex]
& = \{X\in J_{k}:U_{k}X=0\}  \notag \\[0.5ex]
& = (0:^{J_{k}}U_{k})  \notag \\[0.5ex]
& = J_{k}\cap (0:^{R_{k}}U_{k})  \notag \\[0.5ex]
& =J_{k}\cap J_{k+1}\hspace*{0.6em}\makebox{\rm [by Lemma 14(c)]}  \notag \\%
[0.5ex]
& =J_{k+1}\hspace*{0.6em}\makebox{\rm [because $J_{k}\supseteq J_{k+1}$].}
\end{align}

\noindent We thus have

\begin{align*}
\mathrm{dim}_{F}J_{k} &=\mathrm{rank}\,\Theta+\mathrm{nullity}\,\Theta \\%
[0.5ex]
&\leqslant \mathrm{dim}_{F}\left(\mathrm{Hom}_{F}(U_{k},U_{k}J_{k})\right)+%
\mathrm{dim}_{F}J_{k+1}\hspace*{0.6em}\makebox{\rm [by (26)]} \\[0.5ex]
&=\mathrm{dim}_{F}U_{k}\cdot\mathrm{dim}_{F}(U_{k}J_{k})+\mathrm{dim}%
_{F}J_{k+1} \\[0.5ex]
&=d_{k}\left(\mathrm{dim}_{F}V-{\displaystyle\sum_{i=1}^{k}d_{i}}\right)+%
\mathrm{dim}_{F}J_{k+1}\hspace*{0.6em}\makebox{\rm [by (a)],}
\end{align*}

\noindent which is (25).

Letting $k$ take on the values from $1$ to $\ell-1$ in (25), we see that

\begin{align}
\mathrm{dim}_{F}J_{1} &\leqslant\sum_{j=1}^{\ell-1}d_{j}\left(\mathrm{dim}%
_{F}V-\sum_{i=1}^{j}d_{i}\right)+\mathrm{dim}_{F}J_{\ell}  \notag \\[0.5ex]
&=\sum_{j=1}^{\ell-1}d_{j}\left(\mathrm{dim}_{F}V-\sum_{i=1}^{j}d_{i}\right)%
\hspace*{0.6em}\makebox{\rm [because $J_{\ell}=0$]}  \notag \\[0.5ex]
&=\frac{1}{2}\left((\mathrm{dim}_{F}V)^{2}-\sum_{i=1}^{\ell}d_{i}^{2}\right)%
\hspace*{0.6em}%
\makebox{\rm [because ${\rm dim}_{F}V=\sum_{i=1}^{\ell}d_{i}$
by (b)]}  \notag \\[0.5ex]
&\leqslant 
\parbox[t]{24em}{$M(\ell,{\rm dim}_{F}V)-1$\hspace*{0.6em}[by the definition of $M(\ell,{\rm dim}_{F}V)$ \\[0.5ex]
\,noting that ${\rm dim}_{F}V=\sum_{i=1}^{\ell}d_{i}$].}
\end{align}

\noindent Since $R$ has $F$-space decomposition $R=FI_{n}\oplus J$, we have

\begin{align*}
\mathrm{dim}_{F}R &=1+\mathrm{dim}_{F}J \\[0.5ex]
&=1+\mathrm{dim}_{F}J_{1}\hspace*{0.6em}\makebox{\rm [because $J=J_{1}$]} \\%
[0.5ex]
&\leqslant 1+M(\ell,\mathrm{dim}_{F}V)-1\hspace*{0.6em}%
\makebox{\rm [by
(27)]} \\[0.5ex]
&=M(\ell,\mathrm{dim}_{F}V).
\end{align*}
\end{proof}

In Proposition 29 it is shown that $M(\ell,n)$ is an increasing function in
both arguments. This means, with reference to Theorem 17(c), that the
smaller the value of $\ell$, the lower the upper bound $M(\ell,\mathrm{dim}%
_{F}V)$ for $\mathrm{dim}_{F}R$.

We shall show presently that if the $F$-subalgebra $R$ of $\mathbb{U}%
_{n}^{\ast}(F)$ has radical $J$ satisfying $J^{m}=0$ for some $m\in\mathbb{N}
$, then the value of $\ell$ cannot exceed $m$, and so

\begin{equation*}
\mathrm{dim}_{F}R\leqslant M(m,\mathrm{dim}_{F}V).
\end{equation*}

\noindent In the next section we shall strengthen the above by proving that
if $R$ has Lie nilpotence index $m$ (this is the case if $J^{m+1}=0$), then
the value of $\ell$ cannot exceed $m+1$, from which we may deduce

\begin{equation*}
\mathrm{dim}_{F}R\leqslant M(m+1,\mathrm{dim}_{F}V).
\end{equation*}

Since the $d_{i}$ are positive in Theorem 17(b), we must have $\ell\leqslant%
\mathrm{dim}_{F}V$. A combination of Theorem 17(c), the fact that $M(\ell,n)$
is increasing in its first argument (Proposition 29), and the formula for $%
M(n,n)$ derived in Corollary 27(a), yields:


\begin{corollary}
If $R$ is an $F$-subalgebra of $\mathbb{U}_{n}^{\ast}(F)$ and $V$ any
faithful right $R$-module, then

\begin{equation*}
\mathrm{dim}_{F}R\leqslant M(\mathrm{dim}_{F}V,\mathrm{dim}_{F}V)={%
\textstyle \frac{1}{2}}\left((\mathrm{dim}_{F}V)^{2}-\mathrm{dim}%
_{F}V\right)+1.
\end{equation*}
\end{corollary}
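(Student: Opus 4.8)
The plan is to derive Corollary 18 directly from the machinery already assembled, with essentially no new work beyond chaining together three facts. Recall that $R$ is an $F$-subalgebra of $\mathbb{U}_n^{\ast}(F)$ and $V$ is a faithful right $R$-module; let $\ell$ be the integer defined in (18) and $d_1,d_2,\ldots,d_\ell$ the dimensions defined in (24). By Proposition 16 each $R_k\supset R_{k+1}$ for $k\in\{1,2,\ldots,\ell-1\}$, and Lemma 14(c) shows $U_k\ne 0$ for $k\in\{1,2,\ldots,\ell\}$, so each $d_k=\mathrm{dim}_F U_k$ is strictly positive. Combined with Theorem 17(b), which gives $\mathrm{dim}_F V=\sum_{i=1}^{\ell}d_i$ as a sum of $\ell$ positive integers, this forces $\ell\leqslant\mathrm{dim}_F V$.

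Next I would invoke Theorem 17(c), which already gives $\mathrm{dim}_F R\leqslant M(\ell,\mathrm{dim}_F V)$. The only remaining ingredient is monotonicity of $M$ in its first argument, which is recorded in Proposition 29 (forward-referenced): since $\ell\leqslant\mathrm{dim}_F V$, we get $M(\ell,\mathrm{dim}_F V)\leqslant M(\mathrm{dim}_F V,\mathrm{dim}_F V)$. Chaining these,
\begin{equation*}
\mathrm{dim}_F R\leqslant M(\ell,\mathrm{dim}_F V)\leqslant M(\mathrm{dim}_F V,\mathrm{dim}_F V).
\end{equation*}

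Finally I would identify the right-hand side explicitly. By definition (6), $M(n,n)$ is the maximum of $\tfrac12\bigl(n^2-\sum_{i=1}^{n}k_i^2\bigr)+1$ over nonnegative integers $k_i$ with $\sum k_i=n$; since there are $n$ summands each constrained to sum to $n$, the sum $\sum k_i^2$ is minimized (subject to being at least $n$, by convexity or by the power-mean inequality) precisely when every $k_i=1$, giving $\sum k_i^2=n$ and hence $M(n,n)=\tfrac12(n^2-n)+1$. This is exactly the formula asserted in Corollary 27(a), so I would simply cite that corollary rather than redo the optimization. Substituting $n=\mathrm{dim}_F V$ yields $M(\mathrm{dim}_F V,\mathrm{dim}_F V)=\tfrac12\bigl((\mathrm{dim}_F V)^2-\mathrm{dim}_F V\bigr)+1$, completing the proof.

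There is really no serious obstacle here: the corollary is a packaging result, and every nontrivial step has been isolated into a prior statement (Theorem 17, Proposition 16, Lemma 14(c), Proposition 29, Corollary 27(a)). The one point to be slightly careful about is the degenerate case $\ell=1$ — but Theorem 17(c) already handles it ($\mathrm{dim}_F R=1=M(1,\mathrm{dim}_F V)$), and the inequality $1\leqslant\mathrm{dim}_F V$ together with monotonicity still delivers the stated bound, so no separate treatment is needed. The proof is therefore a two- or three-line concatenation of cited results.
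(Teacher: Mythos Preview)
Your proposal is correct and follows essentially the same route as the paper: the text immediately preceding the corollary observes that the $d_i$ are positive (hence $\ell\leqslant\mathrm{dim}_F V$), then combines Theorem~17(c), the monotonicity of $M$ in its first argument (Proposition~29), and the formula $M(n,n)=\tfrac12(n^2-n)+1$ from Corollary~27(a). Your only addition is the explicit citation of Lemma~14(c) (and, somewhat superfluously, Proposition~16) for the positivity of the $d_k$, which the paper leaves implicit.
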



\begin{remark}
If $V=F^{n}=\overbrace{F\times F\times\cdots\times F}^{n\hspace*{0.2em}%
\mathrm{times}}$ is interpreted as a $1\times n$ matrix over $F$, then it
has the canonical structure of a faithful right module with respect to any $%
F $-subalgebra of the matrix algebra $\mathbb{M}_{n}(F)$. For such a module $%
V$, we have

\begin{equation*}
\mathrm{dim}_{F}V=n.
\end{equation*}

\noindent This allows us to replace $\mathrm{dim}_{F}V$ with $n$ in each of
the results in this, and subsequent, sections. In particular, taking $%
\mathrm{dim}_{F}V=n$ in the previous corollary yields the upper bound

\begin{equation*}
\mathrm{dim}_{F}R\leqslant{\textstyle \frac{1}{2}}(n^{2}-n)+1,
\end{equation*}

\noindent an observation that has little value, since the expression $\frac{1%
}{2}(n^{2}-n)+1$ coincides with the dimension of the overlying $F$-algebra $%
\mathbb{U}_{n}^{\ast}(F)$.
\end{remark}


\begin{proposition}
Let the sequences $\{R_{k}\}_{k\in\mathbb{N}}$, $\{J_{k}\}_{k\in\mathbb{N}}$
and $\{U_{k}\}_{k\in\mathbb{N}}$ be defined as in (14), and positive integer 
$\ell$ defined as in (18). If $J^{m}=0$ for some $m\in\mathbb{N}$, then $%
\ell\leqslant m$.
\end{proposition}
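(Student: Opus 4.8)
The plan is to read the result off directly from the descending chain (17) of radicals together with the convention $J_{0}=R$; no new machinery is required. The one observation that does all the work is that every $J_{k}$ is contained in $J=J_{1}$.

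First I would recall that (17) gives $J_{1}\supseteq J_{2}\supseteq\cdots$, so that $J_{i}\subseteq J_{1}=J$ for every $i\in\mathbb{N}$ — this is exactly the containment the paper already invokes just before (18) to guarantee that $\ell$ is well defined. From this it follows that the product $J_{1}J_{2}\cdots J_{m}$ lies inside $J\cdot J\cdots J=J^{m}$, which is $0$ by hypothesis. Since $J_{0}=R$, the product occurring in the definition (18) of $\ell$ for the value $k=m$ then satisfies $J_{0}J_{1}\cdots J_{m}=R\,(J_{1}J_{2}\cdots J_{m})\subseteq R\cdot J^{m}=0$. Hence $m$ lies in the set $\{k\in\mathbb{N}:J_{0}J_{1}\cdots J_{k}=0\}$, and therefore $\ell$, being the minimum of that set, satisfies $\ell\leqslant m$.

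There is essentially no obstacle here: the argument is a one-line telescoping of $J_{1}\cdots J_{m}$ into $J^{m}$ once the containments $J_{i}\subseteq J$ are in hand. If a module-theoretic rendering is wanted for uniformity with the rest of the section, the same computation may instead be run as $VJ_{0}J_{1}\cdots J_{m}\subseteq VJ^{m}=0$, after which $J_{0}J_{1}\cdots J_{m}=0$ follows from the faithfulness equivalence in Remark 15(b); but the purely ring-theoretic version above is the shortest route.
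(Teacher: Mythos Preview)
Your proof is correct and is essentially the same as the paper's own proof, which simply notes that $J_{0}J_{1}\cdots J_{m}\subseteq J^{m}=0$ (using $J_{i}\subseteq J$ for all $i$) and then invokes the definition of $\ell$ in (18).
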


\begin{proof}
Inasmuch as $J_{0}J_{1}\ldots J_{m}\subseteq J^{m}=0$, it follows from the
definition of $\ell$ in (18) that $\ell\leqslant m$.
\end{proof}


\begin{corollary}
If $R$ is an $F$-subalgebra of $\mathbb{U}_{n}^{\ast}(F)$ satisfying $%
J^{m}=0 $, and $V$ is any faithful right $R$-module, then

\begin{equation*}
\mathrm{dim}_{F}R\leqslant M(m,\mathrm{dim}_{F}V).
\end{equation*}
\end{corollary}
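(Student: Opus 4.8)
The plan is to read off the bound directly from two results already in hand: the general dimension estimate of Theorem 17(c), which asserts $\mathrm{dim}_{F}R\leqslant M(\ell,\mathrm{dim}_{F}V)$ for the integer $\ell$ defined in (18) relative to the chosen faithful module $V$, and Proposition 21, which controls $\ell$ in terms of the nilpotency degree of $J$. Since the hypothesis is precisely $J^{m}=0$, Proposition 21 gives $\ell\leqslant m$.

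First I would invoke Theorem 17(c) to obtain $\mathrm{dim}_{F}R\leqslant M(\ell,\mathrm{dim}_{F}V)$. Next I would apply Proposition 21 to conclude $\ell\leqslant m$. Finally I would use the monotonicity of $M(\ell,n)$ in its first argument (Proposition 29, proved in Section 7) to pass from $\ell$ to $m$, obtaining $M(\ell,\mathrm{dim}_{F}V)\leqslant M(m,\mathrm{dim}_{F}V)$. Chaining these inequalities yields $\mathrm{dim}_{F}R\leqslant M(m,\mathrm{dim}_{F}V)$, as required.

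There is essentially no obstacle here: the corollary is a straightforward assembly of Theorem 17(c), Proposition 21, and the monotonicity of $M$ in its first slot. The only point worth a word of care is that $\ell$ depends on the particular faithful module $V$ and on the auxiliary choices of subspace complements made in (14), but this causes no difficulty because the bound $\ell\leqslant m$ furnished by Proposition 21 holds regardless of those choices, and $M(m,\mathrm{dim}_{F}V)$ depends only on $m$ and $\mathrm{dim}_{F}V$.
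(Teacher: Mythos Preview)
Your proposal is correct and mirrors the paper's own proof exactly: invoke Theorem~17(c), bound $\ell\leqslant m$ via the nilpotency hypothesis, and use monotonicity of $M$ (Proposition~29). The only slip is a numbering error---the result you cite as ``Proposition~21'' is actually Proposition~20 in the paper (Corollary~21 is the very statement you are proving).
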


\begin{proof}
It follows from Theorem 17(c) and Proposition 20 that there exists a
positive integer $\ell\leqslant m$ such that $\mathrm{dim}_{F}R\leqslant
M(\ell,\mathrm{dim}_{F}V)$. By Proposition 29, $M(\ell,\mathrm{dim}%
_{F}V)\leqslant M(m,\mathrm{dim}_{F}V)$, whence $\mathrm{dim}_{F}R\leqslant
M(m,\mathrm{dim}_{F}V)$.
\end{proof}


\section{Lie nilpotent subalgebras of $\mathbb{U}_{n}^{\ast}(F)$: the main
theorem}

A routine inductive argument establishes the following.


\begin{lemma}
Let $R$ be an arbitrary ring and $\{r_{i}:1\leqslant i\leqslant m\}\subseteq
R$. Then

\begin{equation*}
[r_{1},r_{2},\ldots,r_{m}]^{\ast}=\sum_{\sigma\in
S_{m}}c_{\sigma}r_{\sigma(1)}r_{\sigma(2)}\ldots r_{\sigma(m)}
\end{equation*}

\noindent where $c_{\sigma}\in\{-1,0,1\}$ for all $\sigma\in S_{m}$, and $%
\{\sigma\in S_{m}: c_{\sigma}\neq 0$ and $\sigma(1)=1\}$ is a singleton
comprising the identity permutation.
\end{lemma}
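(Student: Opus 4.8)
The plan is to prove the identity by induction on $m$, unwinding the recursive definition of the left-normed Lie product $[r_1,\ldots,r_m]^{\ast}=[[r_1,\ldots,r_{m-1}]^{\ast},r_m]$ one step at a time. For $m=1$ the statement is trivial: the sum over $S_1$ has the single term $c_{\mathrm{id}}r_1$ with $c_{\mathrm{id}}=1$, and the identity permutation is the unique one fixing $1$. For the inductive step, I would assume
\[
[r_1,r_2,\ldots,r_{m-1}]^{\ast}=\sum_{\tau\in S_{m-1}}c_{\tau}\,r_{\tau(1)}r_{\tau(2)}\ldots r_{\tau(m-1)},
\]
with each $c_\tau\in\{-1,0,1\}$ and exactly one nonzero $c_\tau$ with $\tau(1)=1$, namely $\tau=\mathrm{id}$ and $c_{\mathrm{id}}=1$. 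Then expand
\[
[r_1,\ldots,r_m]^{\ast}=\Big(\sum_{\tau}c_\tau r_{\tau(1)}\ldots r_{\tau(m-1)}\Big)r_m-r_m\Big(\sum_{\tau}c_\tau r_{\tau(1)}\ldots r_{\tau(m-1)}\Big).
\]
Each monomial $r_{\tau(1)}\ldots r_{\tau(m-1)}r_m$ coming from the first sum corresponds to the permutation $\sigma\in S_m$ with $\sigma(i)=\tau(i)$ for $i<m$ and $\sigma(m)=m$; each monomial $r_m r_{\tau(1)}\ldots r_{\tau(m-1)}$ from the second sum corresponds to the permutation $\sigma'\in S_m$ with $\sigma'(1)=m$ and $\sigma'(i)=\tau(i-1)$ for $i>1$. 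These two families of permutations are disjoint (the first has $\sigma(m)=m$, the second has $\sigma'(1)=m\neq m$ only if $m>1$, which holds here), so no cancellation or reinforcement occurs between them, and the resulting coefficients all lie in $\{-1,0,1\}$, inheriting this from the $c_\tau$.

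For the singleton claim in the inductive step: a permutation $\sigma\in S_m$ with nonzero coefficient and $\sigma(1)=1$ cannot come from the second family (there $\sigma'(1)=m>1$), so it must come from the first family, i.e.\ $\sigma(m)=m$ and $\sigma$ restricts to some $\tau\in S_{m-1}$ with $c_\tau\neq0$ and $\tau(1)=\sigma(1)=1$. By the inductive hypothesis the only such $\tau$ is the identity with $c_{\mathrm{id}}=1$, forcing $\sigma=\mathrm{id}\in S_m$ with coefficient $+1$. Hence $\{\sigma\in S_m:c_\sigma\neq0,\ \sigma(1)=1\}=\{\mathrm{id}\}$, as required.

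I expect no serious obstacle here; the one point that warrants a little care is verifying that the two families of monomials arising from the two terms of the outer commutator are genuinely disjoint as indexed by permutations, so that the coefficients are obtained by (at worst) subtracting two disjointly supported $\{0,1\}$-valued families rather than ever adding two $1$'s — this is what keeps the coefficients in $\{-1,0,1\}$. Since the paper explicitly says ``a routine inductive argument'', I would present the induction compactly, emphasizing the bookkeeping of which permutation each monomial corresponds to, and leave the fully expanded combinatorics implicit.
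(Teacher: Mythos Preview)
Your proposal is correct and follows exactly the approach the paper indicates: the paper does not spell out a proof but merely states that ``a routine inductive argument establishes the following,'' and your induction on $m$ is precisely such an argument. The only quibble is that your parenthetical justification for disjointness is phrased a bit awkwardly; the clean statement is that a permutation in both families would satisfy $\sigma(1)=m=\sigma(m)$, forcing $1=m$ by injectivity, which is impossible in the inductive step.
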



\begin{proposition}
Let the sequences $\{R_{k}\}_{k\in\mathbb{N}}$, $\{J_{k}\}_{k\in\mathbb{N}}$
and $\{U_{k}\}_{k\in\mathbb{N}}$ be defined as in (14), and positive integer 
$\ell$ defined as in (18). If $R$ is Lie nilpotent of index $m$, then $%
\ell\leqslant m+1$.
\end{proposition}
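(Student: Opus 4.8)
The plan is to show that if $R$ is Lie nilpotent of index $m$, then the descending chain of ideals $J_0 J_1 \cdots J_k$ must terminate at $0$ by the time $k$ reaches $m+1$; by the definition of $\ell$ in (18), this gives $\ell \leqslant m+1$. Equivalently, invoking Remark 15(b), it suffices to prove that $J_{m+1} = 0$.

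First I would unwind what membership in the $J_k$ means in terms of the module $V$. By Lemma 14(c), $J_{k+1} = (0 :^{R_k} U_k)$, i.e. an element $r \in R_k$ lies in $J_{k+1}$ precisely when $U_k r = 0$. Iterating, an element lies in $J_{k+1}$ iff it annihilates $U_k$ and lies in $R_k$, and by Lemma 14(b) we have $U_k R_k = V J_0 J_1 \cdots J_{k-1}$. The key point I want to extract is a ``shift'' phenomenon: if $x \in J_j$ for some $j$, then multiplying on the left by $x$ moves you down one step in the filtration $V \supseteq V J_0 J_1 \supseteq \cdots$, in the sense that $(V J_0 \cdots J_{t-1}) x \subseteq V J_0 \cdots J_t$ whenever $j \geqslant$ the appropriate index — more carefully, since $J_j \subseteq J_t$ for $t \leqslant j$ by (17), an element of $J_{k}$ acts like an element of each $J_t$ with $t \leqslant k$. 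So a product $x_1 x_2 \cdots x_{m+1}$ with $x_i \in J_k \subseteq J_1$ for suitable indices will annihilate $V$ outright once we have $m+1$ factors, because each factor drops the filtration level by one and after $\ell$ drops we hit $0$. This is essentially why a purely associative (non-Lie) hypothesis $J^{m} = 0$ already gives $\ell \leqslant m$ in Proposition 20.

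The substance of the proposition is that Lie nilpotence of index $m$ — a strictly weaker hypothesis than $J^{m+1} = 0$ — still forces $\ell \leqslant m+1$. Here is where I would bring in Lemma 22: for $r_1, \dots, r_{m+1} \in J_1$ (the radical of $R$), the Lie product expands as $[r_1, \dots, r_{m+1}]^\ast = \sum_{\sigma \in S_{m+1}} c_\sigma r_{\sigma(1)} \cdots r_{\sigma(m+1)}$ where the \emph{only} surviving term with $\sigma(1) = 1$ is the identity term $r_1 r_2 \cdots r_{m+1}$ itself. Since $R$ is Lie nilpotent of index $m$, the left-hand side is $0$. Now I would choose the $r_i$ cleverly so that every term in the sum with $\sigma(1) \neq 1$ vanishes when applied to the appropriate subspace of $V$, leaving $r_1 r_2 \cdots r_{m+1}$ equal to (minus) a sum of things that are $0$ on that subspace, hence $0$ there too. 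Concretely, suppose for contradiction $\ell \geqslant m+2$, so $J_{m+1} \neq 0$ by Remark 15(b); pick $0 \neq r_{m+1} \in J_{m+1}$, and for $i = 1, \dots, m$ pick $r_i \in J_i$ witnessing that successive filtration steps are nonzero (they are, since $i \leqslant \ell - 1$, by Lemma 14(b)(c)). Then $V \cdot r_1 r_2 \cdots r_{m+1}$: the first factor $r_1 \in J_1 = J$ sends $V$ into $VJ_1 = VJ_0J_1$; but one must be careful that subsequent $r_i$'s act on the \emph{right} subalgebra. The honest route is to use $U_{m+1}$: we have $U_{m+1} \subseteq V J_0 J_1 \cdots J_m$, and $U_{m+1} r_{m+1} = 0$ by Lemma 14(c) since $r_{m+1} \in J_{m+1} = (0:^{R_m} U_m)$...

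Let me be more precise about the mechanism rather than hand-wave: the cleanest version of the argument evaluates the expansion on a vector $v \in U_\ell$ (so $\ell = m+2$, $v \in U_{m+2}$, $v \neq 0$), or more robustly one shows inductively that for $x \in V J_0 \cdots J_{j-1}$ and $s \in J_j$ we get $xs \in V J_0 \cdots J_j$, and that conjugation-type manipulations let us replace the non-identity permutation terms by expressions starting with a factor from a \emph{deeper} $J_j$. The main obstacle — and the crux of the whole proposition — is exactly this bookkeeping: showing that in the permuted products $r_{\sigma(1)} r_{\sigma(2)} \cdots r_{\sigma(m+1)}$ with $\sigma(1) \neq 1$, the leading factor $r_{\sigma(1)}$ with $\sigma(1) > 1$ lies in a ``higher-index'' $J_{\sigma(1)}$, and tracking how such a product acts on $V$ forces it to land in $V J_0 J_1 \cdots J_\ell = 0$ before all $m+1$ factors are used up, because the indices $\sigma(1), \sigma(2), \dots$ contain enough distinct large values — while the single identity term $r_1 \cdots r_{m+1}$, acting on the right subspace, is nonzero by the faithfulness statements in Lemma 14(d). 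Balancing these two facts against $[r_1,\dots,r_{m+1}]^\ast = 0$ yields the contradiction, so $\ell \leqslant m+1$.
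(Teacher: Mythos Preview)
Your proposal identifies the right ingredients --- Lemma~22, the filtration $\{VJ_0J_1\cdots J_k\}$, and faithfulness --- but the execution has a genuine gap: you never settle on the correct subspace to evaluate on, and the ``filtration descent'' heuristic you sketch does not actually separate the identity permutation from the others. Concretely, take $\sigma=(1\ 2)$ and your choices $r_i\in J_i$. Then $V r_2 r_1 r_3\cdots r_{m+1}\subseteq VJ_1J_2\cdot r_1\cdot J_3\cdots J_{m+1}$; since $J_2$ is a \emph{right} ideal of $R_1$ we get $J_2 r_1\subseteq J_2$, so this lands in $VJ_1J_2J_3\cdots J_{m+1}$ --- the \emph{same} level as the identity term. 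Thus for $\ell=m+2$ neither term vanishes on $V$, and your proposed contradiction never materializes. Your attempts with $U_{m+1}$ and $U_\ell$ fare no better: $J_{m+1}=(0:^{R_m}U_m)$ annihilates $U_m$, not $U_{m+1}$; and on $U_\ell$ the first factor $r_{\sigma(1)}\in J_{\sigma(1)}$ with $\sigma(1)\leqslant m+1<\ell$ need not annihilate anything.

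The paper's proof evaluates on $U_1$, which is the decisive move you are missing. Since $J_2=(0:^{R}U_1)$ and $J_k\subseteq J_2$ for all $k\geqslant 2$, \emph{every} term $r_{\sigma(1)}\cdots r_{\sigma(m+1)}$ with $\sigma(1)\neq 1$ dies instantly on $U_1$. Taking $r_1=r\in R$ arbitrary and $r_k=b_k\in J_k$ for $k\geqslant 2$, Lemma~22 and Lie nilpotence give $U_1 r\, b_2\cdots b_{m+1}=0$; letting $r$ range over $R$ and using $U_1R=V$ (Lemma~14(b)) yields $Vb_2\cdots b_{m+1}=0$, hence $J_2\cdots J_{m+1}=0$ by faithfulness, so $\ell\leqslant m+1$. (Your variant with $r_1\in J_1$ would also work once you evaluate on $U_1$, via $U_1J_1=U_1RJ_1=VJ_1$; the missing idea is the subspace, not the choice of $r_1$.)
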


\begin{proof}
By Lemma 14(c), we have $J_{2}=(0:^{R}U_{1})$. Pick arbitrary $r\in R$ and $%
b_{k}\in J_{k}$ for each $k\in\{2,\ldots,m+1\}$. Since $U_{1}J_{2}=0$ and $%
J_{2}\supseteq J_{3}\supseteq\cdots\supseteq J_{m+1}$, we have $U_{1}b_{k}=0$
for all $k\in\{2,\ldots,m+1\}$. Thus, using Lemma 22, we see that $%
U_{1}[r,b_{2},\ldots,b_{m+1}]^{\ast}=U_{1}rb_{2}\ldots b_{m+1}$. But $R$ is
Lie nilpotent of index $m$, so $[r,b_{2},\ldots,b_{m+1}]^{\ast}=0$ whence $%
U_{1}rb_{2}\ldots b_{m+1}=0$. Since $r$ is arbitrary, we get

\begin{align*}
0 &=(U_{1}R)b_{2}\ldots b_{m+1} \\[0.5ex]
&=V b_{2}\ldots b_{m+1}\hspace*{0.6em}%
\makebox{\rm [because $U_{1}R=V$ by
Lemma 14(b)],} \\[0.5ex]
\end{align*}

\noindent from which we infer $b_{2}\ldots b_{m+1}=0$ since $V$ is faithful.
It follows that $J_{2}\ldots J_{m+1}=0$, so $\ell\leqslant m+1$ by
definition of $\ell$.
\end{proof}


\begin{theorem}
For all positive integers $m$ and $n$, and fields $F$, if $R$ is any $F$%
-subalgebra of $\mathbb{U}_{n}^{\ast}(F)$ with Lie nilpotence index $m$, then

\begin{equation*}
\mathrm{dim}_{F}R\leqslant M(m+1,n).
\end{equation*}
\end{theorem}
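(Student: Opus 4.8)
The plan is to deduce Theorem~24 directly from the machinery already built in Sections~5 and~6, since essentially all the work has been done. First I would invoke Theorem~17(c), which states that for any $F$-subalgebra $R$ of $\mathbb{U}_n^{\ast}(F)$ and any faithful right $R$-module $V$, one has $\mathrm{dim}_F R\leqslant M(\ell,\mathrm{dim}_F V)$, where $\ell$ is the integer defined in (18). Next I would take $V=F^{n}$ with its canonical structure as a faithful right $R$-module, so that $\mathrm{dim}_F V=n$ (this is exactly the substitution noted in Remark~19), giving $\mathrm{dim}_F R\leqslant M(\ell,n)$.

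The second ingredient is the bound on $\ell$ coming from the Lie nilpotence hypothesis. Since $R$ has Lie nilpotence index $m$, Proposition~23 yields $\ell\leqslant m+1$. Finally, because $M(\ell,n)$ is increasing in its first argument — this is Proposition~29, referenced in the excerpt — we get $M(\ell,n)\leqslant M(m+1,n)$, and chaining the inequalities produces $\mathrm{dim}_F R\leqslant M(m+1,n)$, as required.

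I do not expect any real obstacle here: the proof is a three-line assembly of Theorem~17(c), Proposition~23, and the monotonicity of $M(\cdot,n)$, together with the identification $\mathrm{dim}_F V=n$ for $V=F^{n}$. The only thing to be careful about is to make sure the faithful module is chosen correctly (namely $V=F^{n}$, acted on by $R\subseteq\mathbb{U}_n^{\ast}(F)\subseteq\mathbb{M}_n(F)$ on the right), so that $\mathrm{dim}_F V=n$ and Theorem~17(c) applies verbatim. Everything else is already packaged in the cited results.

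\begin{proof}
Let $V=F^{n}$, regarded as a $1\times n$ matrix over $F$; then $V$ is a faithful right $R$-module with $\mathrm{dim}_F V=n$ (see Remark~19). Let $\ell$ be the positive integer defined in (18) relative to this module $V$. By Theorem~17(c),
\begin{equation*}
\mathrm{dim}_F R\leqslant M(\ell,\mathrm{dim}_F V)=M(\ell,n).
\end{equation*}
Since $R$ is Lie nilpotent of index $m$, Proposition~23 gives $\ell\leqslant m+1$. Because $M(\ell,n)$ is increasing in its first argument (Proposition~29), it follows that $M(\ell,n)\leqslant M(m+1,n)$. Combining these,
\begin{equation*}
\mathrm{dim}_F R\leqslant M(\ell,n)\leqslant M(m+1,n),
\end{equation*}
as required.
\end{proof}
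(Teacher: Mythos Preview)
Your proof is correct and matches the paper's own proof essentially line for line: both invoke Theorem~17(c) with $V=F^{n}$ (Remark~19), then use Proposition~23 to get $\ell\leqslant m+1$, and finish with the monotonicity of $M(\cdot,n)$ from Proposition~29.
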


\begin{proof}
Let $m$ and $n$ be arbitrary positive integers, and $F$ an arbitrary field.
Let $R$ be an $F$-subalgebra of $\mathbb{U}_{n}^{\ast}(F)$ with Lie
nilpotence index $m$. If sequences $\{R_{k}\}_{k\in\mathbb{N}}$, $%
\{J_{k}\}_{k\in\mathbb{N}}$ and $\{U_{k}\}_{k\in\mathbb{N}}$ are defined as
in (14), and positive integer $\ell$ defined as in (18), then it follows
from Theorem 17(c) that

\begin{equation*}
\mathrm{dim}_{F}R\leqslant M(\ell,\mathrm{dim}_{F}V).
\end{equation*}

\noindent Choose $V$ to be $F^{n}$, so that $\mathrm{dim}_{F}V=n$ (see
Remark 19). By Proposition 23, $\ell\leqslant m+1$. Since $M(\ell,n)$ is
increasing in its first argument by Proposition 29, we have

\begin{equation*}
\mathrm{dim}_{F}R\leqslant M(\ell,\mathrm{dim}_{F}V)=M(\ell,n)\leqslant
M(m+1,n).
\end{equation*}
\end{proof}


\begin{remark}
Let $R$ be any $F$-subalgebra of $\mathbb{U}_{n}^{\ast}(F)$ satisfying the
polynomial identity

\begin{equation*}
f(x_{1},x_{2},\ldots,x_{m})=\sum_{\sigma\in
S_{m}}c_{\sigma}x_{\sigma(1)}x_{\sigma(2)}\ldots x_{\sigma(m)}=0
\end{equation*}

\noindent where $c_{\sigma}\in F$ for all $\sigma\in S_{m}$, and $%
\{\sigma\in S_{m}: c_{\sigma}\neq 0$ and $\sigma(1)=1\}$ is a singleton
comprising the identity permutation.

Arguments similar to those used earlier in this section show that

\begin{equation*}
\mathrm{dim}_{F}R\leqslant M(m+1,n).
\end{equation*}
\end{remark}

We are finally in a position to complete the proof of the Conjecture.\medskip

\begin{quote}
\textbf{Proof of Conjecture.} \,Let $F$ be any field with algebraic closure $%
K$. Taking the field $F$ of Theorems 12 and 24 to be $K$, we see that the
latter is just Statement (c) of the former. It thus follows from Theorem 12
((c)$\Rightarrow$(a)) that the Conjecture holds in respect of field $K$.

Taking the class of fields ${\mathcal{C}}$ in Theorem 5 to be the singleton $%
{\mathcal{C}}=\{K\}$ and noting that $F$ is a subfield of $K$, we conclude
that the Conjecture holds in respect of field $F$. Since $F$ was chosen
arbitrarily, the proof is complete.
\end{quote}


\section{The function $M(\ell,n)$}

The purposes of this section are twofold. First, to establish a number of
important properties of the function $M(\ell,n)$ that are required in
earlier theory, and second to obtain an explicit description of $M(\ell,n)$;
without such a description, the important results of this paper remain
somewhat opaque. This task will involve the solution of an integer-variable
optimization problem. Our methods, however, are first principled and require
no background knowledge of integer optimization techniques.

We shall make use of the following notation: if ${\mathbf{k}}%
=(k_{1},k_{2},\ldots,k_{\ell})\in\mathbb{N}_{0}^{\ell}$, then:

\begin{list}{{(\alph{standard})}\hfill}{\usecounter{standard}\addtocounter{standard}{0}\setlength{\topsep}{1ex}\setlength{\labelwidth}{1.5\parindent}\setlength{\labelsep}{0.2\parindent}\setlength{\leftmargin}{2.3\parindent}\setlength{\listparindent}{0pt}\setlength{\itemsep}{5pt}}

\item[$\triangleright$] \hspace*{0.5em}${\rm supp}\,{\mathbf k}\,\define\{i\in\{1,2,\ldots,\ell\}:k_{i}>0\}$; and

\item[$\triangleright$] \hspace*{0.5em}${\displaystyle
|{\mathbf k}|\,\define\left(\sum_{i=1}^{\ell}k_{i}^{2}\right)^{1/2}}$ so that ${\displaystyle |{\mathbf k}|^{2}=\sum_{i=1}^{\ell}k_{i}^{2}}$.
\end{list}


\begin{proposition}
Let $\ell$ and $n$ be positive integers. Then the following statements are
equivalent for ${\mathbf{k}}=(k_{1},k_{2},\ldots,k_{\ell})\in\mathbb{N}%
_{0}^{\ell}$ such that $\sum_{i=1}^{\ell}k_{i}=n$:

\begin{list}{{(\alph{standard})}\hfill}{\usecounter{standard}\addtocounter{standard}{0}\setlength{\topsep}{1ex}\setlength{\labelwidth}{1.5\parindent}\setlength{\labelsep}{0.2\parindent}\setlength{\leftmargin}{2.3\parindent}\setlength{\listparindent}{0pt}\setlength{\itemsep}{5pt}}

\item $M(\ell,n)=\frac{1}{2}\left(n^{2}-|{\mathbf k}|^{2}\right)+1$;

\item $|k_{i}-k_{j}|\leqslant 1$ for all $i,j\in\{1,2,\ldots,\ell\}$.
\end{list}
\end{proposition}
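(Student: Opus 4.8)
The plan is to prove the equivalence by showing that the minimizers of $|\mathbf{k}|^{2}$ (equivalently, the maximizers of $\frac{1}{2}(n^{2}-|\mathbf{k}|^{2})+1$) over the set of $\mathbf{k}\in\mathbb{N}_{0}^{\ell}$ with $\sum_{i=1}^{\ell}k_{i}=n$ are precisely those vectors whose coordinates differ pairwise by at most $1$. Since $M(\ell,n)$ is \emph{defined} as this maximum, statement (a) says exactly that $\mathbf{k}$ attains the minimum of $|\mathbf{k}|^{2}$ subject to the constraint, so the whole proposition is an elementary optimization fact.

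First I would prove the implication (b)$\Rightarrow$(a) together with the observation that a vector satisfying (b) exists and is essentially unique up to permutation. If $|k_{i}-k_{j}|\leqslant 1$ for all $i,j$, then the multiset $\{k_{1},\ldots,k_{\ell}\}$ takes only two consecutive values, say $q$ and $q+1$; writing $n=q\ell+s$ with $0\leqslant s<\ell$ (i.e.\ $q=\lfloor n/\ell\rfloor$), exactly $s$ of the coordinates equal $q+1$ and $\ell-s$ equal $q$. Hence $|\mathbf{k}|^{2}=(\ell-s)q^{2}+s(q+1)^{2}$ is the \emph{same} value for every $\mathbf{k}$ satisfying (b). So it suffices to show this common value is the minimum over all admissible $\mathbf{k}$; then by definition of $M(\ell,n)$ we get $M(\ell,n)=\frac{1}{2}(n^{2}-|\mathbf{k}|^{2})+1$, which is (a).

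For the minimality, and simultaneously for (a)$\Rightarrow$(b), I would use the standard \emph{smoothing} (convexity) argument: if $\mathbf{k}$ does \emph{not} satisfy (b), there exist indices $i,j$ with $k_{i}\geqslant k_{j}+2$. Replace $(k_{i},k_{j})$ by $(k_{i}-1,k_{j}+1)$; this keeps the sum equal to $n$ and changes $|\mathbf{k}|^{2}$ by
\begin{equation*}
(k_{i}-1)^{2}+(k_{j}+1)^{2}-k_{i}^{2}-k_{j}^{2}=-2(k_{i}-k_{j}-1)<0,
\end{equation*}
so $|\mathbf{k}|^{2}$ strictly decreases, hence $\frac{1}{2}(n^{2}-|\mathbf{k}|^{2})+1$ strictly increases. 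Therefore any $\mathbf{k}$ violating (b) fails to maximize $\frac{1}{2}(n^{2}-|\mathbf{k}|^{2})+1$, i.e.\ fails (a); contrapositively, (a)$\Rightarrow$(b). Combined with the previous paragraph (every $\mathbf{k}$ satisfying (b) gives the same, and by this smoothing argument strictly smallest, value of $|\mathbf{k}|^{2}$), we also get that this common value \emph{is} the maximum defining $M(\ell,n)$, completing (b)$\Rightarrow$(a).

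I do not expect a genuine obstacle here; the only point requiring a little care is the bookkeeping that guarantees existence of a vector satisfying (b) (so that the maximum in the definition of $M(\ell,n)$ is actually attained by such a vector) and the remark that the smoothing step, applied repeatedly, always terminates — which it must, since $|\mathbf{k}|^{2}$ is a strictly decreasing sequence of nonnegative integers. One should also note in passing that decreasing $k_{i}$ by $1$ cannot make it negative here, since $k_{i}\geqslant k_{j}+2\geqslant 2$, so the resulting vector stays in $\mathbb{N}_{0}^{\ell}$. Everything else is the routine computation displayed above.
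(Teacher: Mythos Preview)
Your proposal is correct and follows essentially the same approach as the paper: the implication (a)$\Rightarrow$(b) is proved identically via the smoothing move $(k_i,k_j)\mapsto(k_i-1,k_j+1)$, and for (b)$\Rightarrow$(a) both arguments rest on the observation that a vector satisfying (b) has all coordinates in a two-element set $\{q,q+1\}$. The only cosmetic difference is in the bookkeeping for (b)$\Rightarrow$(a): you pin down $q=\lfloor n/\ell\rfloor$ directly and conclude that all (b)-vectors share the same $|\mathbf{k}|^{2}$, whereas the paper takes an actual maximizer $\mathbf{k}'$, applies (a)$\Rightarrow$(b) to it, and then runs a short case analysis on the two possible ``levels'' to show $|\mathbf{k}|=|\mathbf{k}'|$.
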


\begin{proof}
(a)$\Rightarrow$(b) Suppose (a) holds but $|k_{p}-k_{q}|\geqslant 2$ for
some $p,q\in\{1,2,\ldots,\ell\}$. Without loss of generality, we may suppose
that $k_{p}\geqslant k_{q}+2$. Define ${\mathbf{k}}^{\prime}=(k_{1}^{%
\prime},k_{2}^{\prime},\ldots,k_{\ell}^{\prime})\in\mathbb{N}_{0}^{\ell}$ by:

\begin{equation*}
k_{i}^{\prime}\overset{\mathrm{def}}{=} 
\begin{cases}
k_{i},\hspace*{0.4em}\makebox{\rm if $i\notin\{p,q\}$} \\[1ex] 
k_{p}-1,\hspace*{0.4em}\makebox{\rm if $i=p$} \\[1ex] 
k_{q}+1,\hspace*{0.4em}\makebox{\rm if $i=q$.}%
\end{cases}%
\end{equation*}

\noindent Note that $\sum_{i=1}^{\ell}k_{i}^{\prime}=\sum_{i=1}^{%
\ell}k_{i}=n $. Then:

\begin{align*}
&{\textstyle\frac{1}{2}}\left(n^{2}-|{\mathbf{k}}^{\prime}|^{2}\right)+1-M(%
\ell,n) \\[1ex]
&= {\textstyle \frac{1}{2}}\left(n^{2}-|{\mathbf{k}}^{\prime}|^{2}\right)+1-%
\left({\textstyle \frac{1}{2}}\left(n^{2}-|{\mathbf{k}}|^{2}\right)+1\right)
\\
&= \frac{1}{2}\left(\sum_{i=1}^{\ell}\left(k_{i}^{2}-(k_{i}^{\prime})^{2}%
\right)\right) \\[0.5ex]
&= {\textstyle \frac{1}{2}}\left(k_{p}^{2}+k_{q}^{2}-(k_{p}^{%
\prime})^{2}-(k_{q}^{\prime})^{2}\right) \\[1ex]
&= {\textstyle \frac{1}{2}}%
\left(k_{p}^{2}+k_{q}^{2}-(k_{p}-1)^{2}-(k_{q}+1)^{2}\right) \\[1ex]
&= {\textstyle \frac{1}{2}}\left(2k_{p}-2k_{q}-2\right) \\[1ex]
&= k_{p}-k_{q}-1>0\hspace*{0.6em}%
\makebox{\rm [because $k_{p}\geqslant
k_{q}+2$].}
\end{align*}

\noindent This implies that $\frac{1}{2}\left(n^{2}-|{\mathbf{k}}%
^{\prime}|^{2}\right)+1>M(\ell,n)$, a contradiction.

(b)$\Rightarrow$(a) Suppose ${\mathbf{k}}=(k_{1},k_{2},\ldots,k_{\ell})\in%
\mathbb{N}_{0}^{\ell}$ is such that $\sum_{i=1}^{\ell}k_{i}=n$ and $%
|k_{i}-k_{j}|\leqslant 1$ for all $i,j\in\{1,2,\ldots,\ell\}$. Inasmuch as
each $k_{i}$ is nonnegative this implies the existence of some $r\in\mathbb{N%
}$ such that

\begin{align}
k_{i}\in\{r-1,r\}\hspace*{0.5em}\forall i\in\{1,2,\ldots,\ell\}.
\end{align}

\noindent Now suppose $M(\ell,n)=\frac{1}{2}\left(n^{2}-|{\mathbf{k}}%
^{\prime}|^{2}\right)+1$ with ${\mathbf{k}}^{\prime}=(k_{1}^{\prime},k_{2}^{%
\prime},\ldots,k_{\ell}^{\prime})\in\mathbb{N}_{0}^{\ell}$ such that $%
\sum_{i=1}^{\ell}k_{i}^{\prime}=n$. It follows from implication (a)$%
\Rightarrow$(b) that $|k_{i}^{\prime}-k_{j}^{\prime}|\leqslant 1$ for all $%
i,j\in\{1,2,\ldots,\ell\}$, so there must exist some $s\in\mathbb{N}$ such
that

\begin{align}
k_{i}^{\prime}\in\{s-1,s\}\hspace*{0.5em}\forall i\in\{1,2,\ldots,\ell\}.
\end{align}

\noindent If $r<s$, then it follows from (28) and (29) that

\begin{equation*}
k_{i}\leqslant r\leqslant s-1\leqslant k_{i}^{\prime}\hspace*{0.5em}\forall
i\in\{1,2,\ldots,\ell\}.
\end{equation*}

\noindent Since $\sum_{i=1}^{\ell}k_{i}=\sum_{i=1}^{\ell}k_{i}^{\prime}$,
the above inequalities can only be satisfied if $k_{i}=k_{i}^{\prime}$ for
all $i\in\{1,2,\ldots,\ell\}$, whence ${\mathbf{k}}={\mathbf{k}}^{\prime}$.

A similar argument shows that ${\mathbf{k}}={\mathbf{k}}^{\prime}$ whenever $%
r>s$. Thus if $r\neq s$, then ${\mathbf{k}}={\mathbf{k}}^{\prime}$, whence $%
\frac{1}{2}\left(n^{2}-|{\mathbf{k}}|^{2}\right)+1=\frac{1}{2}\left(n^{2}-|{%
\mathbf{k}}^{\prime}|^{2}\right)+1=M(\ell,n)$ and the proof is complete.

Now suppose $r=s$. Since $k_{i},k_{i}^{\prime}\in\{r,r-1\}$ for each $%
i\in\{1,2,\ldots,\ell\}$ and since $\sum_{i=1}^{\ell}k_{i}=\sum_{i=1}^{%
\ell}k_{i}^{\prime}$, it is easily seen that ${\mathbf{k}}$ and ${\mathbf{k}}%
^{\prime}$ are equal to within permutation of their coordinates, that is to
say, there exists a permutation $\sigma\in S_{\ell}$ such that $%
k_{i}^{\prime}=k_{\sigma(i)}$ for all $i\in\{1,2,\ldots,\ell\}$. Clearly, in
such a situation $|{\mathbf{k}}|=|{\mathbf{k}}^{\prime}|$ and $\frac{1}{2}%
\left(n^{2}-|{\mathbf{k}}|^{2}\right)+1=\frac{1}{2}\left(n^{2}-|{\mathbf{k}}%
^{\prime}|^{2}\right)+1=M(\ell,n)$.
\end{proof}


\begin{corollary}
Let $\ell$ and $n$ be positive integers. Then:

\begin{list}{{(\alph{standard})}\hfill}{\usecounter{standard}\addtocounter{standard}{0}\setlength{\topsep}{1ex}\setlength{\labelwidth}{1.5\parindent}\setlength{\labelsep}{0.2\parindent}\setlength{\leftmargin}{2.3\parindent}\setlength{\listparindent}{0pt}\setlength{\itemsep}{5pt}}

\item If $\ell\leqslant n$, then $M(\ell,n)=\frac{1}{2}\left(n^{2}-|{\mathbf k}|^{2}\right)+1$ for some
${\mathbf k}=(k_{1},k_{2},\ldots,k_{\ell})\in\N_{0}^{\ell}$ with $k_{i}\geqslant 1$ for all $i\in\{1,2,\ldots,\ell\}$.
In particular, $M(n,n)=\frac{1}{2}\left(n^{2}-n\right)+1$.

\item If $\ell>n$, then $M(\ell,n)=M(n,n)=\frac{1}{2}\left(n^{2}-n\right)+1$.
\end{list}
\end{corollary}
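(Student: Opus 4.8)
The plan is to prove both parts of Corollary~27 directly from Proposition~26, which already characterizes those tuples ${\mathbf k}$ achieving the maximum as precisely the ``balanced'' tuples (those with all coordinates differing by at most~$1$). The only work remaining is to exhibit, for each pair $(\ell,n)$, a balanced tuple with the required support properties, and then read off the value.

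For part~(a), suppose $\ell\leqslant n$. Write $n=q\ell+r$ with $0\leqslant r<\ell$ via the division algorithm, and take the tuple ${\mathbf k}$ having $r$ coordinates equal to $q+1$ and $\ell-r$ coordinates equal to $q$. This tuple sums to $n$ and is balanced, so by Proposition~26 (implication (b)$\Rightarrow$(a)) it achieves $M(\ell,n)=\frac12(n^2-|{\mathbf k}|^2)+1$. Since $\ell\leqslant n$ forces $q=\lfloor n/\ell\rfloor\geqslant 1$, every coordinate is at least~$1$, giving the first assertion. For the ``in particular'' clause take $\ell=n$: then $q=1$, $r=0$, every coordinate equals~$1$, so $|{\mathbf k}|^2=n$ and $M(n,n)=\frac12(n^2-n)+1$.

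For part~(b), suppose $\ell>n$. Here no balanced tuple of length $\ell$ summing to $n$ can have all coordinates positive (that would force the sum to be at least $\ell>n$); the balanced tuples are those with $n$ coordinates equal to $1$ and $\ell-n$ coordinates equal to $0$. Such a tuple ${\mathbf k}$ has $|{\mathbf k}|^2=n$, so by Proposition~26 again, $M(\ell,n)=\frac12(n^2-n)+1$, which by part~(a) equals $M(n,n)$. This completes both parts.

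I do not anticipate any real obstacle here: all the substance has been absorbed into Proposition~26, and what is left is the elementary observation that balanced tuples of prescribed length and sum exist and are easy to write down explicitly, together with a one-line computation of $|{\mathbf k}|^2$ in each case. The only point requiring the smallest care is making sure the balanced tuple constructed in each case genuinely lies in $\N_0^\ell$ with the stated positivity constraint — positive entries when $\ell\leqslant n$, and necessarily some zero entries when $\ell>n$ — but this is immediate from comparing $n$ with $\ell$.
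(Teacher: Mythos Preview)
Your proposal is correct and follows essentially the same approach as the paper: both deduce Corollary~27 directly from Proposition~26 by identifying the balanced tuples summing to $n$ and reading off $|\mathbf{k}|^2$. The only cosmetic difference is directional: you construct an explicit balanced tuple via the division algorithm and invoke (b)$\Rightarrow$(a) of Proposition~26, whereas the paper starts from an arbitrary maximizer, invokes (a)$\Rightarrow$(b) to conclude it is balanced, and then argues by contradiction that all entries are positive when $\ell\leqslant n$.
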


\begin{proof}
By Proposition 26, we can choose ${\mathbf{k}}=(k_{1},k_{2},\ldots,k_{\ell})%
\in\mathbb{N}_{0}^{\ell}$ such that $\sum_{i=1}^{\ell}k_{i}=n$, $M(\ell,n)=%
\frac{1}{2}\left(n^{2}-|{\mathbf{k}}|^{2}\right)+1$ and $|k_{i}-k_{j}|%
\leqslant 1$ for all $i,j\in\{1,2,\ldots,\ell\}$.

(a) Suppose $\ell\leqslant n$. If $k_{j}=0$ for some $j\in\{1,2,\ldots,\ell%
\} $, then $k_{i}\in\{0,1\}$ for all $i\in\{1,2,\ldots,\ell\}$, whence $%
n=\sum_{i=1}^{\ell}k_{i}<\ell\leqslant n$, a contradiction.

If $\ell=n$, then clearly $k_{i}=1$ for all $i\in\{1,2,\ldots,\ell\}$, so $|{%
\mathbf{k}}|^{2}=n$ and $M(\ell,n)=M(n,n)=\frac{1}{2}\left(n^{2}-n\right)+1$.

(b) Suppose $\ell> n$. Since $n=\sum_{i=1}^{\ell}k_{i}$, we must have $%
k_{j}=0$ for some $j\in\{1,2,\ldots,\ell\}$. Thus $k_{i}\in\{0,1\}$ for all $%
i\in\{1,2,\ldots,\ell\}$, so $|{\mathbf{k}}|^{2}=n$ and $M(\ell,n)=M(n,n)$.
\end{proof}


\begin{proposition}
Let $\ell$ be an integer satisfying $\ell\geqslant 2$ and $%
n_{1},n_{2},\ldots,n_{k}$ any sequence of positive integers. Then 
\begin{equation*}
M\!\left(\ell,\sum_{i=1}^{k}n_{i}\right)\geqslant
\sum_{i=1}^{k}M(\ell,n_{i}).
\end{equation*}
\end{proposition}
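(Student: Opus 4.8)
The plan is to reduce the statement to the case $k=2$ by an immediate induction, and then to exhibit a good feasible partition of $n_1+n_2$ built from optimal partitions of $n_1$ and $n_2$. For the induction, it suffices to prove $M(\ell,n_1+n_2)\geqslant M(\ell,n_1)+M(\ell,n_2)$ for all positive integers $n_1,n_2$; granting this, and writing $N=\sum_{i=1}^{k-1}n_i$ (a positive integer when $k\geqslant 2$), one gets $M(\ell,N+n_k)\geqslant M(\ell,N)+M(\ell,n_k)\geqslant\sum_{i=1}^{k-1}M(\ell,n_i)+M(\ell,n_k)$, with the base case $k=1$ trivial.

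For the two-summand case I would choose $\mathbf{a}=(a_1,\ldots,a_\ell)\in\mathbb{N}_0^{\ell}$ with $\sum_i a_i=n_1$ attaining the maximum defining $M(\ell,n_1)$, and similarly $\mathbf{b}=(b_1,\ldots,b_\ell)$ attaining $M(\ell,n_2)$. Since both the objective $\frac{1}{2}(n^2-|\mathbf{k}|^2)+1$ and the constraint $\sum_i k_i=n$ are invariant under permuting the coordinates of $\mathbf{k}$, I may assume $a_1=\max_i a_i$ and $b_\ell=\max_i b_i$; because $n_1,n_2\geqslant 1$ this gives $a_1\geqslant 1$ and $b_\ell\geqslant 1$, and — using the hypothesis $\ell\geqslant 2$ — the indices $1$ and $\ell$ are distinct.

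Now set $\mathbf{c}=\mathbf{a}+\mathbf{b}\in\mathbb{N}_0^{\ell}$, so $\sum_i c_i=n_1+n_2$ and hence, straight from the definition of $M$, $M(\ell,n_1+n_2)\geqslant\frac{1}{2}\bigl((n_1+n_2)^2-|\mathbf{c}|^2\bigr)+1$. Expanding $|\mathbf{c}|^2=|\mathbf{a}|^2+2(\mathbf{a}\cdot\mathbf{b})+|\mathbf{b}|^2$ and $(n_1+n_2)^2=n_1^2+2n_1n_2+n_2^2$ yields
\[
\tfrac{1}{2}\bigl((n_1+n_2)^2-|\mathbf{c}|^2\bigr)=\tfrac{1}{2}\bigl(n_1^2-|\mathbf{a}|^2\bigr)+\tfrac{1}{2}\bigl(n_2^2-|\mathbf{b}|^2\bigr)+\bigl(n_1n_2-\mathbf{a}\cdot\mathbf{b}\bigr),
\]
so the whole matter reduces to the single substantive inequality $n_1n_2-\mathbf{a}\cdot\mathbf{b}\geqslant 1$. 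I would prove this via the identity $n_1n_2-\mathbf{a}\cdot\mathbf{b}=\bigl(\sum_i a_i\bigr)\bigl(\sum_j b_j\bigr)-\sum_i a_ib_i=\sum_{i\neq j}a_ib_j$, and then bound the last sum below by its single term $a_1b_\ell$, which is legitimate precisely because $1\neq\ell$, and which is $\geqslant 1$ since $a_1,b_\ell$ are positive integers. Substituting back gives $M(\ell,n_1+n_2)\geqslant\bigl[\tfrac{1}{2}(n_1^2-|\mathbf{a}|^2)+1\bigr]+\bigl[\tfrac{1}{2}(n_2^2-|\mathbf{b}|^2)+1\bigr]=M(\ell,n_1)+M(\ell,n_2)$.

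I do not anticipate any genuine obstacle here: the argument needs only the definition of $M(\ell,n)$ (not the finer description in Proposition 26), and the one place any care is required is the inequality $n_1n_2-\mathbf{a}\cdot\mathbf{b}\geqslant 1$, which is exactly where $\ell\geqslant 2$ is indispensable — one needs two distinct coordinate slots to house the maxima of $\mathbf{a}$ and $\mathbf{b}$. This matches the fact that the conclusion genuinely fails for $\ell=1$, where $M(1,n)=1$ for every $n$ and the claimed inequality is violated as soon as $k\geqslant 2$.
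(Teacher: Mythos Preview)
Your proof is correct and follows the same overall strategy as the paper: reduce to $k=2$, add optimal partitions $\mathbf{a}$ and $\mathbf{b}$, expand $(n_1+n_2)^2-|\mathbf{a}+\mathbf{b}|^2$, and show the cross-term $n_1n_2-\mathbf{a}\cdot\mathbf{b}=\sum_{i\neq j}a_ib_j$ is at least $1$. The one substantive difference is in how that last inequality is established. The paper splits into two cases: if both optimal partitions have support of size $1$ (so that $\sum_{i\neq j}a_ib_j$ could vanish when the supports coincide), it argues ad hoc that $M(\ell,n_1)=M(\ell,n_2)=1$ while $M(\ell,n_1+n_2)\geqslant 2$; otherwise at least one partition has two nonzero coordinates and the cross-sum is visibly $\geqslant 1$. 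Your permutation trick---placing $\max_i a_i$ in slot $1$ and $\max_i b_i$ in slot $\ell$, which are distinct because $\ell\geqslant 2$---guarantees a single cross-term $a_1b_\ell\geqslant 1$ uniformly, eliminating the case split. This is a neat simplification; the paper's version, by contrast, makes slightly more explicit why the hypothesis $\ell\geqslant 2$ cannot be dropped even in the degenerate case.
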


\begin{proof}
We provide a proof in the case $k=2$; the arguments used can be applied
mutatis-mutandis to establish the inductive step in a proof by induction on $%
k$. Choose ${\mathbf{k}}=(k_{1},k_{2},\ldots,k_{\ell})\in\mathbb{N}%
_{0}^{\ell}$ such that

\begin{align}
\sum_{i=1}^{\ell}k_{i}=n_{1}
\end{align}

\noindent and

\begin{align}
M(\ell,n_{1})={\textstyle \frac{1}{2}}\left(n_{1}^{2}-|{\mathbf{k}}%
|^{2}\right)+1,
\end{align}

\noindent and choose $\overline{\mathbf{k}}=(\bar{k}_{1},\bar{k}_{2},\ldots,%
\bar{k}_{\ell})\in\mathbb{N}_{0}^{\ell}$ such that

\begin{align}
\sum_{i=1}^{\ell}\bar{k}_{i}=n_{2}
\end{align}

\noindent and

\begin{align}
M(\ell,n_{2})={\textstyle \frac{1}{2}}\left(n_{2}^{2}-|\overline{\mathbf{k}}%
|^{2}\right)+1.
\end{align}

If $|\mathrm{supp}\,{\mathbf{k}}|=|\mathrm{supp}\,{\overline{\mathbf{k}}}|=1$%
, then it follows from (30) that $M(\ell,n_{1})=\frac{1}{2}%
\left(n_{1}^{2}-n_{1}^{2}\right)+1=1$, and from (32) that $M(\ell,n_{2})=%
\frac{1}{2}\left(n_{2}^{2}-n_{2}^{2}\right)+1=1$. Since $\ell$, $%
n_{1}+n_{2}\geqslant 2$, it is clear that we can choose ${\mathbf{k}}%
^{\ast}=(k_{1}^{\ast},k_{2}^{\ast},\ldots,k_{\ell}^{\ast})\in\mathbb{N}%
_{0}^{\ell}$ such that $|\mathrm{supp}\,{\mathbf{k}}^{\ast}|\geqslant 2$ and 
$\sum_{i=1}^{\ell}k_{i}^{\ast}=n_{1}+n_{2}$. Then

\begin{align*}
M(\ell,n_{1}+n_{2}) &\geqslant {\textstyle\frac{1}{2}%
\left((n_{1}+n_{2})^{2}-|{\mathbf{k}}^{\ast}|^{2}\right)+1} \\[0.5ex]
&= \frac{1}{2}\left(\left(\sum_{i=1}^{\ell}k_{i}^{\ast}\right)^{2}-%
\sum_{i=1}^{\ell}(k_{i}^{\ast})^{2}\right)+1 \\[0.5ex]
&= \sum_{i,j=1,\,i<j}^{\ell}k_{i}^{\ast}k_{j}^{\ast}+1 \\[0.5ex]
&\geqslant 2=M(\ell,n_{1})+M(\ell,n_{2}),
\end{align*}

\noindent as required.

Now suppose $|\mathrm{supp}\,{\mathbf{k}}|\geqslant 2$ or $|\mathrm{supp}\,{%
\overline{\mathbf{k}}}|\geqslant 2$.

Put ${\overline{\overline{\mathbf{k}}}}=(\bar{\bar{k}}_{1},\bar{\bar{k}}%
_{2},\ldots,\bar{\bar{k}}_{\ell})={\mathbf{k}}+{\overline{\mathbf{k}}}$. By
(30) and (32)

\begin{align}
\sum_{i=1}^{\ell}\bar{\bar{k}}_{i}=n_{1}+n_{2}.
\end{align}

\noindent Then

\begin{align}
M(\ell,n_{1}+n_{2}) &\geqslant 
\parbox[t]{22em}{
$\frac{1}{2}\left((n_{1}+n_{2})^{2}-|{\overline{\overline{\mathbf k}}}|^{2}\right)+1$\hspace*{0.6em}[by (34) and the defini-\\[0.2\baselineskip]
 tion of $M(\ell,n_{1}+n_{2})$]}  \notag \\[0.5ex]
& = {\textstyle\frac{1}{2}}\left(n_{1}^{2}+n_{2}^{2}-|{\mathbf{k}}|^{2}-|{%
\overline{\mathbf{k}}}|^{2}\right)+ n_{1}n_{2}-\sum_{i=1}^{\ell}k_{i}\bar{k}%
_{i}+1  \notag \\[1ex]
& = 
\parbox[t]{22em}{
$M(\ell,n_{1})+M(\ell,n_{2})+n_{1}n_{2}-{\displaystyle\sum_{i=1}^{\ell}}k_{i}\bar{k}_{i}-1$\hspace*{0.6em}[by (31)\\
and (33)]
}  \notag \\[0.5ex]
& =
M(\ell,n_{1})+M(\ell,n_{2})+\left(\sum_{i=1}^{\ell}k_{i}\right)\left(%
\sum_{i=1}^{\ell}\bar{k}_{i}\right)-\sum_{i=1}^{\ell}k_{i}\bar{k}_{i}-1 
\notag \\
&\hspace*{1.3em}{[}\parbox[t]{15em}{by (30) and (32)]}  \notag \\[0.5ex]
& = M(\ell,n_{1})+M(\ell,n_{2})+\sum_{i,j=1,\,i\neq j}^{\ell}k_{i}\bar{k}%
_{j}-1.
\end{align}

\noindent Since, by hypothesis, $|\mathrm{supp}\,{\mathbf{k}}|\geqslant 2$
or $|\mathrm{supp}\,{\overline{\mathbf{k}}}|\geqslant 2$, we must have $%
\sum_{i,j=1,\,i\neq j}^{\ell}k_{i}\bar{k}_{j}\geqslant 1$, hence by (35), $%
M(\ell,n_{1}+n_{2})\geqslant M(\ell,n_{1})+M(\ell,n_{2})$, as required.
\end{proof}


\begin{proposition}
The function $M(\ell,n)$ is increasing in both its arguments.
\end{proposition}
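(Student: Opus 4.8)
The plan is to prove the two monotonicity assertions separately, reading ``increasing'' in the weak sense: Corollary 27(b) already exhibits $M(\ell,n)$ as constant in $\ell$ for $\ell\geqslant n$, so strictness in the first argument cannot be expected.

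For the first argument I would use a zero-padding argument. Fix positive integers $\ell\leqslant\ell'$ and $n$, and let ${\mathbf k}=(k_{1},\ldots,k_{\ell})\in\mathbb{N}_{0}^{\ell}$ satisfy $\sum_{i=1}^{\ell}k_{i}=n$. The tuple $(k_{1},\ldots,k_{\ell},0,\ldots,0)\in\mathbb{N}_{0}^{\ell'}$ again sums to $n$ and has the sum of its squared entries equal to $\sum_{i=1}^{\ell}k_{i}^{2}$, so the value $\frac12\bigl(n^{2}-\sum_{i=1}^{\ell}k_{i}^{2}\bigr)+1$ obtained from ${\mathbf k}$ is also obtained from an admissible $\ell'$-tuple. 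Hence every number competing in the maximum defining $M(\ell,n)$ also competes in that defining $M(\ell',n)$, giving $M(\ell,n)\leqslant M(\ell',n)$.

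For the second argument I would first dispatch the degenerate case $\ell=1$, where $M(1,n)=\frac12(n^{2}-n^{2})+1=1$ for every $n$, so $M(1,\cdot)$ is constant. For $\ell\geqslant 2$ and $n<n'$, the idea is to apply Proposition 28 to the decomposition $n'=n+(n'-n)$ into two positive summands, which gives $M(\ell,n')\geqslant M(\ell,n)+M(\ell,n'-n)$. One then needs only the trivial lower bound $M(\ell,m)\geqslant 1$ (visible from the admissible tuple $(m,0,\ldots,0)$, which yields the value $1$), so that $M(\ell,n')\geqslant M(\ell,n)+1>M(\ell,n)$. Combining the two parts establishes that $M(\ell,n)$ is non-decreasing in each variable.

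I do not expect a genuine obstacle: both halves amount to exhibiting a competitor in the relevant maximum, and the only matters requiring attention are the choice of the weak reading of ``increasing'' and the separate treatment of $\ell=1$ in the second-argument step, since Proposition 28 is stated only for $\ell\geqslant 2$. As a byproduct, Proposition 28 in fact yields strict increase in the second argument whenever $\ell\geqslant 2$.
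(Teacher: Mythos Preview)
Your proposal is correct and follows essentially the same route as the paper: zero-padding for monotonicity in $\ell$, and Proposition~28 for monotonicity in $n$. In fact you are slightly more careful than the paper, which simply declares the second-argument monotonicity an ``immediate consequence of Proposition~28'' without isolating the case $\ell=1$ (to which that proposition does not apply) or spelling out the decomposition $n'=n+(n'-n)$ together with the bound $M(\ell,m)\geqslant 1$.
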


\begin{proof}
That $M(\ell,n)$ is increasing in its second argument is an immediate
consequence of Proposition 28.

To show that $M(\ell,n)$ is increasing in its first argument, it suffices to
show that $M(\ell,n)\leqslant M(\ell+1,n)$. Choose ${\mathbf{k}}%
=(k_{1},k_{2},\ldots,k_{\ell})\in\mathbb{N}_{0}^{\ell}$ such that $%
\sum_{i=1}^{\ell}k_{i}=n$ and $M(\ell,n)=\frac{1}{2}\left(n^{2}-|{\mathbf{k}}%
|^{2}\right)+1$. Putting ${\mathbf{k}}^{\prime}=(k_{1},k_{2},\ldots,k_{%
\ell},0)\in\mathbb{N}_{0}^{\ell+1}$, we see that $M(\ell,n)=\frac{1}{2}%
\left(n^{2}-|{\mathbf{k}}|^{2}\right)+1=\frac{1}{2}\left(n^{2}-|{\mathbf{k}}%
^{\prime}|^{2}\right)+1\leqslant M(\ell+1,n)$, as required.
\end{proof}

We attempt now an explicit description of the function $M(\ell,n)$. This is
achieved in Theorem 31. If $\ell$ and $n$ are positive integers with $%
\ell\geqslant n$, then Corollary 27 exhibits the simple formula $M(\ell,n)=%
\frac{1}{2}\left(n^{2}-n\right)+1$. We shall therefore restrict our
attention to the case $\ell\leqslant n$. For such integers $\ell $ and $n$
we denote by $n\,(\mathrm{mod}\,\ell)$ the nonnegative remainder on dividing 
$n$ by $\ell$, that is, the unique integer $r<\ell$ that satisfies

\begin{equation*}
n=\left\lfloor\frac{n}{\ell}\right\rfloor\ell +r.
\end{equation*}

\noindent Let $r=n\,(\mathrm{mod}\,\ell)$ and define ${\mathbf{d}}%
=(d_{1},d_{2},\ldots,d_{\ell})\in\mathbb{N}_{0}^{\ell}$ by

\begin{align}
d_{i}\overset{\mathrm{def}}{=} 
\begin{cases}
\left\lfloor\frac{n}{\ell}\right\rfloor,\hspace*{0.4em}%
\makebox{\rm for
$1\leqslant i\leqslant \ell-r$} \\[2ex] 
\left\lfloor\frac{n}{\ell}\right\rfloor+1,\hspace*{0.4em}%
\makebox{\rm for
$\ell-r<i\leqslant\ell$}.%
\end{cases}%
\end{align}

We omit the proof of the following routine lemma.


\begin{lemma}
Let $\ell$ and $n$ be positive integers with $\ell\leqslant n$ and $r=n\,(%
\mathrm{mod}\,\ell)$. If ${\mathbf{d}}$ is defined as in (36), then

\begin{equation*}
|{\mathbf{d}}|^{2}=(\ell-r)\left\lfloor\frac{n}{\ell}\right\rfloor^2+r\left(%
\left\lfloor\frac{n}{\ell}\right\rfloor+1\right)^2= \frac{n^{2}-r^{2}}{\ell}%
+r.
\end{equation*}
\end{lemma}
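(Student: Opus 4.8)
The statement to prove is the routine Lemma 30: if $\mathbf{d}=(d_1,\ldots,d_\ell)$ is defined as in (36) with $r=n\,(\mathrm{mod}\,\ell)$, then
\[
|{\mathbf{d}}|^{2}=(\ell-r)\left\lfloor\tfrac{n}{\ell}\right\rfloor^2+r\left(\left\lfloor\tfrac{n}{\ell}\right\rfloor+1\right)^2= \frac{n^{2}-r^{2}}{\ell}+r.
\]

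\medskip

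The plan is to compute $|\mathbf{d}|^2=\sum_{i=1}^\ell d_i^2$ directly from the defining formula (36) and then simplify. First I would write $q=\left\lfloor\frac{n}{\ell}\right\rfloor$ for brevity, so that $n=q\ell+r$ with $0\leqslant r<\ell$. By (36) exactly $\ell-r$ of the coordinates $d_i$ equal $q$ and exactly $r$ of them equal $q+1$, so summing the squares gives immediately $|\mathbf{d}|^2=(\ell-r)q^2+r(q+1)^2$, which is the first claimed equality (essentially by definition, so no real work there).

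For the second equality I would expand: $(\ell-r)q^2+r(q+1)^2=\ell q^2+r(2q+1)=\ell q^2+2qr+r$. The target is $\frac{n^2-r^2}{\ell}+r$. Since $n=q\ell+r$, we have $n^2-r^2=(q\ell+r)^2-r^2=q^2\ell^2+2q\ell r+r^2-r^2=q^2\ell^2+2q\ell r=\ell(q^2\ell+2qr)$, hence $\frac{n^2-r^2}{\ell}=q^2\ell+2qr$, and adding $r$ yields $\ell q^2+2qr+r$, which matches. This completes the proof.

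There is no substantive obstacle here — the entire argument is a two-line algebraic manipulation once one counts how many coordinates take each of the two values. The only thing to be careful about is bookkeeping: confirming from (36) that there are $\ell-r$ indices with $i\leqslant\ell-r$ and $r$ indices with $\ell-r<i\leqslant\ell$, and that the division identity $n=q\ell+r$ is used correctly to rewrite $n^2-r^2$. I would present this as a short displayed computation rather than prose.

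\begin{proof}
Write $q=\left\lfloor\frac{n}{\ell}\right\rfloor$, so that $n=q\ell+r$ with $0\leqslant r<\ell$. By the definition (36), exactly $\ell-r$ of the coordinates of $\mathbf{d}$ equal $q$ and exactly $r$ of them equal $q+1$. Hence
\begin{equation*}
|{\mathbf{d}}|^{2}=\sum_{i=1}^{\ell}d_{i}^{2}=(\ell-r)q^{2}+r(q+1)^{2},
\end{equation*}
which is the first claimed equality. Expanding,
\begin{equation*}
(\ell-r)q^{2}+r(q+1)^{2}=\ell q^{2}+r(2q+1)=\ell q^{2}+2qr+r.
\end{equation*}
On the other hand, since $n=q\ell+r$,
\begin{equation*}
n^{2}-r^{2}=(q\ell+r)^{2}-r^{2}=q^{2}\ell^{2}+2q\ell r=\ell\left(q^{2}\ell+2qr\right),
\end{equation*}
so that $\frac{n^{2}-r^{2}}{\ell}+r=q^{2}\ell+2qr+r$. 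Comparing the two displays gives
\begin{equation*}
|{\mathbf{d}}|^{2}=(\ell-r)q^{2}+r(q+1)^{2}=\frac{n^{2}-r^{2}}{\ell}+r,
\end{equation*}
as required.
\end{proof}
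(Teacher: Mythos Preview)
Your proof is correct. The paper omits the proof of this lemma entirely, calling it routine; your direct computation (counting the coordinates of each size and simplifying via $n=q\ell+r$) is exactly the natural argument the authors had in mind.
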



\begin{theorem}
Let $\ell$ and $n$ be positive integers with $\ell\leqslant n$ and $r=n\,(%
\mathrm{mod}\,\ell)$. If ${\mathbf{d}}$ is defined as in (36), then

\begin{align*}
M(\ell,n) & ={\textstyle\frac{1}{2}}\left(n^{2}-|{\mathbf{d}}|^{2}\right)+1
\\[1ex]
& =\frac{1}{2}\left(n^2 - (\ell-r)\left\lfloor\frac{n}{\ell}\right\rfloor^2
- r\left(\left\lfloor\frac{n}{\ell}\right\rfloor + 1\right)^2\right) + 1 \\%
[1ex]
& = \frac{n^{2}(\ell-1)}{2\ell}+\frac{1}{2}\left(\frac{r^{2}}{\ell}%
-r\right)+1.
\end{align*}
\end{theorem}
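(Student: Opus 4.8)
The plan is to combine Proposition~26 with the combinatorial Lemma~30 and then carry out an elementary simplification. By Proposition~26, if $\mathbf{k}=(k_{1},k_{2},\ldots,k_{\ell})\in\mathbb{N}_{0}^{\ell}$ satisfies $\sum_{i=1}^{\ell}k_{i}=n$ and $|k_{i}-k_{j}|\leqslant 1$ for all $i,j$, then $M(\ell,n)=\frac{1}{2}\bigl(n^{2}-|\mathbf{k}|^{2}\bigr)+1$. So the first step is simply to observe that the vector $\mathbf{d}$ defined in~(36) has exactly this property: its entries are $\lfloor n/\ell\rfloor$ and $\lfloor n/\ell\rfloor+1$, which differ by at most $1$, and $\sum_{i=1}^{\ell}d_{i}=(\ell-r)\lfloor n/\ell\rfloor+r(\lfloor n/\ell\rfloor+1)=\ell\lfloor n/\ell\rfloor+r=n$ by the division algorithm. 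Hence $M(\ell,n)=\frac{1}{2}\bigl(n^{2}-|\mathbf{d}|^{2}\bigr)+1$, which is the first displayed equality in the theorem.

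The second displayed equality is then just the explicit value of $|\mathbf{d}|^{2}$, namely $|\mathbf{d}|^{2}=(\ell-r)\lfloor n/\ell\rfloor^{2}+r(\lfloor n/\ell\rfloor+1)^{2}$, which is immediate from the definition~(36) of $\mathbf{d}$ (and is the first half of Lemma~30). For the third equality I would invoke the second half of Lemma~30, which states $|\mathbf{d}|^{2}=\frac{n^{2}-r^{2}}{\ell}+r$. Substituting this into $\frac{1}{2}\bigl(n^{2}-|\mathbf{d}|^{2}\bigr)+1$ gives
\begin{equation*}
M(\ell,n)=\frac{1}{2}\left(n^{2}-\frac{n^{2}-r^{2}}{\ell}-r\right)+1
=\frac{1}{2}\left(n^{2}\cdot\frac{\ell-1}{\ell}+\frac{r^{2}}{\ell}-r\right)+1
=\frac{n^{2}(\ell-1)}{2\ell}+\frac{1}{2}\left(\frac{r^{2}}{\ell}-r\right)+1,
\end{equation*}
which is the desired closed form.

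There is no real obstacle here; the theorem is essentially a packaging of Proposition~26 and Lemma~30, and every step is a one-line verification. The only point requiring a modicum of care is confirming that $\mathbf{d}$ does satisfy both hypotheses of Proposition~26 (the sum being $n$, and consecutive-integer entries), so that the proposition applies and pins down $M(\ell,n)$ exactly as $\frac{1}{2}\bigl(n^{2}-|\mathbf{d}|^{2}\bigr)+1$ rather than merely bounding it. Once that is noted, the rest is the arithmetic displayed above.
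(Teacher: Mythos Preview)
Your proposal is correct and follows essentially the same approach as the paper's proof: verify that $\mathbf{d}$ satisfies the hypotheses of Proposition~26(b), apply (b)$\Rightarrow$(a) to obtain the first equality, then invoke both parts of Lemma~30 and simplify algebraically to obtain the remaining two forms. If anything, you are slightly more explicit than the paper in checking that $\sum_{i=1}^{\ell}d_{i}=n$.
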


\begin{proof}
It is clear from the definition of ${\mathbf{d}}$ in (36) that $%
\sum_{i=1}^{\ell}d_{i}=n$ and $|d_{i}-d_{j}|\leqslant 1$ for all $%
i,j\in\{1,2,\ldots,\ell\}$. Hence, by Proposition 26 ((b)$\Rightarrow$(a)),

\begin{align*}
M(\ell,n) &= {\textstyle\frac{1}{2}}\left(n^{2}-|{\mathbf{d}}|^{2}\right)+1 
\notag \\[1ex]
& = \frac{1}{2}\left(n^2 - (\ell-r)\left\lfloor\frac{n}{\ell}\right\rfloor^2
- r\left(\left\lfloor\frac{n}{\ell}\right\rfloor + 1\right)^2\right) + 1%
\hspace*{0.6em}\makebox{\rm [by Lemma 30]} \\[1ex]
& = \frac{1}{2}\left(n^{2}-\left(\frac{n^{2}-r^{2}}{\ell}+r\right)\right)+1%
\hspace*{0.6em}\makebox{\rm [by Lemma 30]} \\[1ex]
& = \frac{1}{2}\left(n^{2}-\frac{n^{2}}{\ell}+\frac{r^{2}}{\ell}-r\right)+1
\\[1ex]
& = \frac{n^{2}(\ell-1)}{2\ell}+\frac{1}{2}\left(\frac{r^{2}}{\ell}%
-r\right)+1.
\end{align*}
\end{proof}

Suppose $F$ is any field and $R$ the algebra of $n\times n$ matrices over $F$
of type

\begin{equation*}
{\mathbf{d}}=(\overbrace{ {\textstyle\left\lfloor\frac{n}{\ell}\right\rfloor}%
, {\textstyle\left\lfloor\frac{n}{\ell}\right\rfloor},\ldots,{\textstyle%
\left\lfloor\frac{n}{\ell}\right\rfloor}}^{(\ell-r)\mathrm{\ times}},%
\overbrace{{\textstyle\left\lfloor\frac{n}{\ell}\right\rfloor+1},{\textstyle%
\left\lfloor\frac{n}{\ell}\right\rfloor+1},\ldots, {\textstyle\left\lfloor%
\frac{n}{\ell}\right\rfloor +1}}^{r\mathrm{\ times}}),
\end{equation*}

\noindent with $n\geqslant\ell\geqslant 2$.

Figure 1 is a pictorial representation of the radical $J$ of $R$.

\begin{figure}[!ht]
\caption{Pictorial representation of the radical $J$ of $R$}\vspace*{1ex}
\par
\begin{center}
\begin{tikzpicture}

\draw [dotted](0,9)--(9,0);

\draw {[line width=0.1pt](0,0) rectangle (9,9)};

\draw [fill=gray!30][line width=0.1pt](9,9) rectangle (0.5,8.5);
\draw [fill=gray!30][line width=0.1pt](9,8.5) rectangle (1,8);
\draw [fill=gray!30][line width=0.1pt](9,7) rectangle (2.5,6.5);
\draw [fill=gray!30][line width=0.1pt](9,6.5) rectangle (3.5,5.5);
\draw [fill=gray!30][line width=0.1pt](9,5.5) rectangle (4.5,4.5);
\draw [fill=gray!30][line width=0.1pt](9,2) rectangle (8,1);

\node [right] at (8.5,7.25) {$\cdot$};
\node [right] at (8.5,7.5) {$\cdot$};
\node [right] at (8.5,7.75) {$\cdot$};

\node [right] at (8.5,2.25) {$\cdot$};
\node [right] at (8.5,2.5) {$\cdot$};
\node [right] at (8.5,2.75) {$\cdot$};
\node [right] at (8.5,3) {$\cdot$};
\node [right] at (8.5,3.25) {$\cdot$};
\node [right] at (8.5,3.5) {$\cdot$};
\node [right] at (8.5,3.75) {$\cdot$};
\node [right] at (8.5,4) {$\cdot$};
\node [right] at (8.5,4.25) {$\cdot$};

\node [right] at (9.32,7.25) {$\cdot$};
\node [right] at (9.32,7.5) {$\cdot$};
\node [right] at (9.32,7.75) {$\cdot$};

\node [right] at (9.32,2) {$\cdot$};
\node [right] at (9.32,2.25) {$\cdot$};
\node [right] at (9.32,2.5) {$\cdot$};
\node [right] at (9.32,2.75) {$\cdot$};
\node [right] at (9.32,3) {$\cdot$};
\node [right] at (9.32,3.25) {$\cdot$};
\node [right] at (9.32,3.5) {$\cdot$};
\node [right] at (9.32,3.75) {$\cdot$};
\node [right] at (9.32,4) {$\cdot$};
\node [right] at (9.32,4.25) {$\cdot$};
\node [right] at (9.32,4.5) {$\cdot$};

\node [left] at (0,4.5) {$J\,=\,$};


\draw [decorate,decoration={brace,amplitude=2.2pt,mirror,raise=4pt},yshift=0pt]
(9,8.55) -- (9,8.95) node [black,midway,xshift=1.1cm]{\footnotesize $d_{1}=\left\lfloor\frac{n}{\ell}\right\rfloor$};

\draw [decorate,decoration={brace,amplitude=2.2pt,mirror,raise=4pt},yshift=0pt]
(9,8.05) -- (9,8.45) node [black,midway,xshift=1.1cm]{\footnotesize $d_{2}=\left\lfloor\frac{n}{\ell}\right\rfloor$};

\draw [decorate,decoration={brace,amplitude=2.2pt,mirror,raise=4pt},yshift=0pt]
(9,6.55) -- (9,6.95) node [black,midway,xshift=1.3cm]{\footnotesize $d_{\ell-r}=\left\lfloor\frac{n}{\ell}\right\rfloor$};

\draw [decorate,decoration={brace,amplitude=3.1pt,mirror,raise=4pt},yshift=0pt]
(9,5.55) -- (9,6.45) node [black,midway,xshift=1.8cm]{\footnotesize $d_{\ell-r+1}=\left\lfloor\frac{n}{\ell}\right\rfloor+1$};

\draw [decorate,decoration={brace,amplitude=3.1pt,mirror,raise=4pt},yshift=0pt]
(9,4.55) -- (9,5.45) node [black,midway,xshift=1.8cm]{\footnotesize $d_{\ell-r+2}=\left\lfloor\frac{n}{\ell}\right\rfloor+1$};

\draw [decorate,decoration={brace,amplitude=3.1pt,mirror,raise=4pt},yshift=0pt]
(9,1.05) -- (9,1.95) node [black,midway,xshift=1.6cm]{\footnotesize $d_{\ell-1}=\left\lfloor\frac{n}{\ell}\right\rfloor+1$};

\draw [decorate,decoration={brace,amplitude=3.1pt,mirror,raise=4pt},yshift=0pt]
(9,0.05) -- (9,0.95) node [black,midway,xshift=1.4cm]{\footnotesize $d_{\ell}=\left\lfloor\frac{n}{\ell}\right\rfloor+1$};

\end{tikzpicture}
\end{center}
\end{figure}
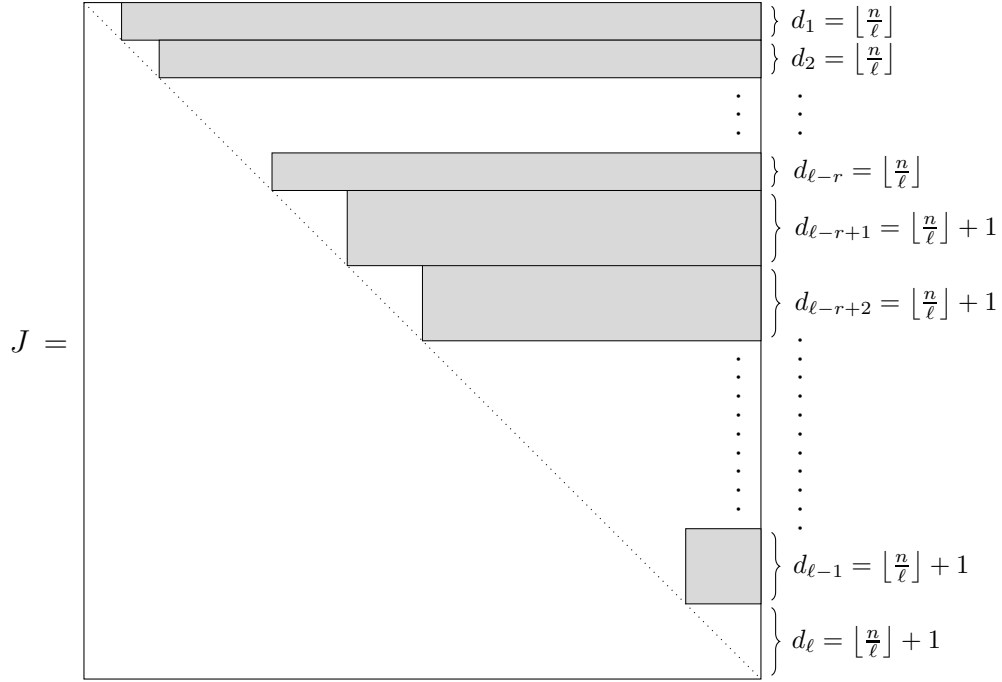

\noindent Inasmuch as $R$ has the form $R=FI_{n}+J$ with $J$ satisfying $%
J^{\ell}=0$, it follows that $R$ is Lie nilpotent of index $\ell-1$. (This
assertion is explained in more detail in the discussion following the
statement of the Conjecture (7).) Moreover,

\begin{align*}
\mathrm{dim}_{F}R &=
\sum_{j=1}^{\ell-1}d_{j}\left(n-\sum_{i=1}^{j}d_{i}\right)+1 \\[0.5ex]
&= {\textstyle\frac{1}{2}}\left(n^{2}-|{\mathbf{d}}|^{2}\right)+1 \\[0.5ex]
&= M(\ell,n)\hspace*{0.6em}\makebox{\rm [by Theorem 31].}
\end{align*}

\noindent Thus $R$ is an $F$-subalgebra of $\mathbb{M}_{n}(F)$ whose
dimension is maximal amongst $F$-subalgebras of $\mathbb{M}_{n}(F)$ with Lie
nilpotence index $\ell-1$.

If $\frac{1}{2}\left(n^{2}-|{\mathbf{k}}|^{2}\right)+1$ is interpreted as a
real-valued function of real variables ${\mathbf{k}}=(k_{1},k_{2},\ldots,k_{%
\ell})\in{\mathbb{R}}^{\ell}$, the methods of multivariable Calculus show
that the function $\frac{1}{2}\left(n^{2}-|{\mathbf{k}}|^{2}\right)+1$,
subject to the constraint $\sum_{i=1}^{\ell}k_{i}=n$, attains a maximum of $%
\frac{n^{2}(\ell-1)}{2\ell}+1$ at ${\mathbf{k}}=(\frac{n}{\ell},\frac{n}{\ell%
},\ldots,\frac{n}{\ell})\in{\mathbb{R}}^{\ell}$. Thus

\begin{align}
\left\lfloor\frac{n^{2}(\ell-1)}{2\ell}\right\rfloor+1\geq M(\ell,n).
\end{align}

We explore now instances in which (37) is an equation, a situation that
arises precisely when $D<1$, where

\begin{equation*}
D\,\overset{\mathrm{def}}{=}\frac{n^{2}(\ell-1)}{2\ell}+1-M(\ell,n).
\end{equation*}

\noindent It follows from Theorem 31 that

\begin{align}
D=\frac{1}{2}\left(r-\frac{r^{2}}{\ell}\right)
\end{align}

\noindent where $r=n\,(\mathrm{mod}\,\ell)$. Observe that $D=D(r,\ell)$ is a
function only of $r$ and $\ell$.

Figure 2 is a sketch of the level curve $D(r,\ell)=1$ in the $r\ell$-plane,
interpreting $r$ and $\ell$ as real-valued variables. A simple calculation
shows that the curve has equation

\begin{equation*}
\ell=\frac{r^{2}}{r-2}.
\end{equation*}

\noindent Its essential features are obtained using elementary Calculus.

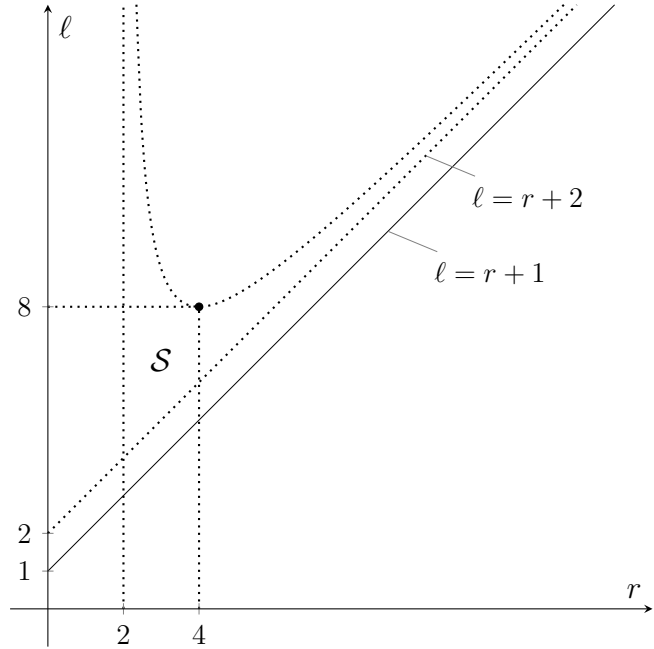
\begin{figure}[!ht]
\caption{The level curve $D(r,\ell)=1$}\vspace*{3ex}
\par
\begin{tikzpicture}[scale=1]
\begin{axis}[
ytick={1,2,8},
xtick={2,4},
yticklabels={$\scalebox{0.9}{1}$,$\scalebox{0.9}{2}$,$\scalebox{0.9}{8}$},
xticklabels={$\scalebox{0.9}{2}$,$\scalebox{0.9}{4}$},
    axis x line=middle,
    axis y line=middle,
    axis on top,
    smooth,
    xlabel=$r$,
    ylabel=$\ell$,
    restrict y to domain=0:16,
    xmin=-1, xmax=16,
    ymin=-1, ymax=16,
    width=0.8\textwidth,
    height=0.8\textwidth,
    legend style={at={(0.02,0.97)},anchor=north west},
]

\path[name path=topaxis] (axis cs:0,16) -- (axis cs:16,16);
\path[name path=bottomaxis] (axis cs:0,0) -- (axis cs:16,0);

\addplot[name path=asymptoteattwo, thick, style=dotted] coordinates {(2,0) (2,16)};
\addplot[name path=shorthorizontal, thick, style=dotted] coordinates {(0,8) (4,8)};
\addplot[name path=shortvertical, thick, style=dotted] coordinates {(4,0) (4,8)};

\path[name path=linesegment] (axis cs:2.34314575,16)--(axis cs:13.6568542,16);
\addplot[name path=shortcurve, draw=black, domain=2.34314575:13.6568542, samples=100, thick, style=dotted]{(x^2)/(x-2)};

\addplot[name path=obliqueline,draw=black, line width=0.3pt, domain=0:16, samples=400]{x+1};
\addplot[name path=topobliqueline,draw=black, thick, domain=0:16, samples=400, style=dotted]{x+2};

\addplot[color=blue, fill=blue, fill opacity=0.1] fill between [of=topaxis and obliqueline];

\addplot[color=white, fill=white, fill opacity=1] fill between [of=linesegment and shortcurve];


\draw(axis cs:4,8) [fill] circle [blue, radius=1];

\node[black,above] at (axis cs:3,6){${\mathcal S}$};

\node[coordinate,pin=-30:{\scalebox{0.9}{$\ell=r+1$}}] at (axis cs:9,10){};

\node[coordinate,pin=-30:{\scalebox{0.9}{$\ell=r+2$}}] at (axis cs:10,12){};

\end{axis}

\end{tikzpicture}
\end{figure}

\noindent The shaded region is

\begin{equation*}
{\mathcal{S}}\,\overset{\mathrm{def}}{=}\{(r,\ell)\in{\mathbb{R}}%
^{2}:0\leqslant r\leqslant\ell-1\makebox{ {\rm and} }D(r,\ell)<1\}.
\end{equation*}

The content of Theorem 32 below is easily gleaned from Figure 2 by
assembling together points $(r,\ell)$ belonging to ${\mathcal{S}}$ that have
integral coordinates.


\begin{theorem}
Let $\ell$ and $n$ be positive integers with $\ell\leqslant n$ and $r=n\,(%
\mathrm{mod}\,\ell)$. Then the following statements are equivalent:

\begin{list}{{(\alph{standard})}\hfill}{\usecounter{standard}\addtocounter{standard}{0}\setlength{\topsep}{1ex}\setlength{\labelwidth}{1.5\parindent}\setlength{\labelsep}{0.2\parindent}\setlength{\leftmargin}{2.3\parindent}\setlength{\listparindent}{0pt}\setlength{\itemsep}{5pt}}

\item ${\displaystyle M(\ell,n)=\left\lfloor\frac{n^{2}(\ell-1)}{2\ell}\right\rfloor+1}$;\\[0ex]

\item $(r,\ell)$ belongs to one of the following (disjoint) sets:

\begin{list}%
{{(\roman{substandard})}\hfill}{\usecounter{substandard}%
\addtocounter{substandard}{0}%
\setlength{\topsep}{1ex}%
\setlength{\labelwidth}{2em}%
\setlength{\labelsep}{0.2\parindent}%
\setlength{\leftmargin}{2em}%
\setlength{\listparindent}{0pt}%
\setlength{\itemsep}{6pt}%
}

\item $\{(r,\ell):0\leqslant r\leqslant \ell-1$ and $1\leqslant\ell\leqslant 7\}$;

\item $\{(r,\ell):0\leqslant r\leqslant 2$ and $\ell\geqslant 8\}$;

\item $\{(r,r+1):r\geqslant 7\}\cup\{(r,r+2):r\geqslant 7\}$;

\item $\{(3,8),(5,8)\}$.
\end{list}
\end{list}
\end{theorem}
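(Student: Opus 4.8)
The plan is to reduce statement (a) to the single numerical inequality $D(r,\ell)<1$ — where $D(r,\ell)=\frac12\bigl(r-\frac{r^{2}}{\ell}\bigr)$ is the function appearing in (38) — and then to obtain (b) by listing the admissible lattice points of the region $\mathcal S$ of Figure~2.

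\emph{First step: reduce (a) to $D(r,\ell)<1$.} By the definition of $D$ recorded just before (38) we have the identity $M(\ell,n)=\frac{n^{2}(\ell-1)}{2\ell}+1-D(r,\ell)$, and clearly $D(r,\ell)=\frac{r(\ell-r)}{2\ell}\geqslant 0$ since $0\leqslant r\leqslant\ell-1$. Because $M(\ell,n)$ is an integer, the identity forces $\frac{n^{2}(\ell-1)}{2\ell}$ and $D(r,\ell)$ to differ by an integer, so $\lfloor\frac{n^{2}(\ell-1)}{2\ell}\rfloor=\bigl(M(\ell,n)-1\bigr)+\lfloor D(r,\ell)\rfloor$, whence
\[
\Bigl\lfloor\tfrac{n^{2}(\ell-1)}{2\ell}\Bigr\rfloor+1-M(\ell,n)=\lfloor D(r,\ell)\rfloor .
\]
As $D(r,\ell)\geqslant 0$, statement (a) — that is, equality in (37) — holds exactly when $\lfloor D(r,\ell)\rfloor=0$, i.e.\ when $D(r,\ell)<1$. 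Clearing denominators, this is the condition $r(\ell-r)<2\ell$, equivalently the condition that $(r,\ell)$ lies strictly below the curve $\ell=\frac{r^{2}}{r-2}$, that is, $(r,\ell)\in\mathcal S$.

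\emph{Second step: enumerate the admissible lattice points of $\mathcal S$.} I would split on $\ell$. For $\ell\leqslant 7$ the inequality $r(\ell-r)<2\ell$ is automatic, since $r(\ell-r)\leqslant\lfloor\ell^{2}/4\rfloor<2\ell$; this yields set~(i) and pinpoints $(4,8)$, where $r(\ell-r)=2\ell$, as the extremal failing point (the minimum of $\ell=r^{2}/(r-2)$). For $\ell\geqslant 8$ and $r\leqslant 2$ one has $r(\ell-r)\leqslant 2\ell-4<2\ell$, giving set~(ii). For $\ell\geqslant 8$ and $r\geqslant 3$, rewrite the inequality as $\ell<r+2+\frac{4}{r-2}$: when $r\geqslant 7$ the term $\frac{4}{r-2}\leqslant\frac45$ is $<1$, so $\ell\in\{r+1,r+2\}$ (and $r+1\geqslant 8$ automatically), producing the two infinite families of set~(iii), while the finitely many remaining values $r\in\{3,4,5,6\}$ contribute only the small exceptional pairs of set~(iv). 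Finally I would check directly that the four sets (i)--(iv) are pairwise disjoint.

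Both reduction steps are essentially formal; the one observation that makes everything work is that $M(\ell,n)\in\mathbb Z$ forces $\frac{n^{2}(\ell-1)}{2\ell}$ and $D(r,\ell)$ to have the same fractional part, so the floor difference is exactly $\lfloor D(r,\ell)\rfloor$. The only place calling for genuine care is the lattice-point bookkeeping of the second step for $\ell\geqslant 8$, $r\geqslant 3$ — in particular correctly handling the behaviour of the curve $\ell=r^{2}/(r-2)$ near its minimum at $(4,8)$ — but Figure~2 renders this transparent, so I do not anticipate any real obstacle.
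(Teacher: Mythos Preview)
Your approach is exactly the paper's: the text preceding Theorem~32 already records that (a) holds precisely when $D(r,\ell)<1$ and then declares that the theorem is ``easily gleaned from Figure~2 by assembling together points $(r,\ell)$ belonging to $\mathcal{S}$ that have integral coordinates''; you have simply spelled out both steps with more care, in particular the fractional-part argument showing $\lfloor D\rfloor$ is the exact discrepancy.

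One caution on your second step. You write that the remaining values $r\in\{3,4,5,6\}$ ``contribute only the small exceptional pairs of set~(iv)'', but if you actually carry out the bound $\ell<r+2+\tfrac{4}{r-2}$ for $r=6$ you get $\ell<9$, hence $\ell=8$, and indeed $D(6,8)=\tfrac12\bigl(6-\tfrac{36}{8}\bigr)=\tfrac34<1$; yet $(6,8)$ does not appear in any of (i)--(iv) as stated. (A direct check with $n=14$, $\ell=8$ confirms $M(8,14)=86=\lfloor 196\cdot 7/16\rfloor+1$.) So the enumeration you propose is correct, but executing it carefully exposes an apparent omission in the theorem's list~(iv), which should read $\{(3,8),(5,8),(6,8)\}$; be sure not to gloss over this case.
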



\begin{remark}
The reader will observe with reference to Theorem 32(b)(i), that if, amongst
others, $1\leqslant\ell\leqslant 7$, we have the simplified formula

\begin{equation*}
M(\ell,n)=\left\lfloor\frac{n^{2}(\ell-1)}{2\ell}\right\rfloor+1.
\end{equation*}

\noindent In particular, if $\ell=2$, then

\begin{equation*}
M(2,n)=\left\lfloor\frac{n^{2}}{4}\right\rfloor+1,
\end{equation*}

\noindent which corresponds with the upper bound in Schur's classical result.
\end{remark}


\section{An illustrative example}

The main body of theory developed in Section 5 is based on the triple of
sequences $\{R_{k}\}_{k\in\mathbb{N}}$, $\{J_{k}\}_{k\in\mathbb{N}}$ and $%
\{U_{k}\}_{k\in\mathbb{N}}$ defined in (14). In this section we show that
the terms in these sequences are easily visualized in the case where $R$ is
the algebra of $n\times n$ matrices over field $F$ of type $%
(d_{1},d_{2},\ldots,d_{\ell})$. Indeed, this special case provides the germ
for our proof strategy.

Let $F$ be any field and $(d_{1},d_{2},\ldots,d_{\ell})$ any sequence of
positive integers satisfying $\sum_{i=1}^{\ell}d_{i}=n$ with $\ell\geqslant
2 $. Let $R$ be the algebra of $n\times n$ matrices over $F$ of type $%
(d_{1},d_{2},\ldots,d_{\ell})$. We saw in Section 1 that the radical $J$ of $%
R$ has pictorial representation

\vspace*{1\baselineskip}

\begin{center}
\begin{tikzpicture}

\draw [dotted](0,5)--(5,0);

\draw {[line width=0.1pt](0,0) rectangle (5,5)};

\draw [fill=gray!30][line width=0.1pt](5,5) rectangle (1,4);
\draw [fill=gray!30][line width=0.1pt](5,4) rectangle (1.7,3.3);
\draw [fill=gray!30][line width=0.1pt](5,1.8) rectangle (4.3,0.7);

\node [right] at (4.5,2.05) {$\cdot$};
\node [right] at (4.5,2.25) {$\cdot$};
\node [right] at (4.5,2.45) {$\cdot$};
\node [right] at (4.5,2.65) {$\cdot$};
\node [right] at (4.5,2.85) {$\cdot$};
\node [right] at (4.5,3.05) {$\cdot$};

\node [right] at (5.32,1.65) {$\cdot$};
\node [right] at (5.32,1.85) {$\cdot$};
\node [right] at (5.32,2.05) {$\cdot$};
\node [right] at (5.32,2.25) {$\cdot$};
\node [right] at (5.32,2.45) {$\cdot$};
\node [right] at (5.32,2.65) {$\cdot$};
\node [right] at (5.32,2.85) {$\cdot$};
\node [right] at (5.32,3.05) {$\cdot$};
\node [right] at (5.32,3.25) {$\cdot$};

\node [left] at (0,2.5) {$J_{1}=J=\,$};

\draw [decorate,decoration={brace,amplitude=3pt,mirror,raise=4pt},yshift=0pt]
(5,4.05) -- (5,4.95) node [black,midway,xshift=0.6cm]{\footnotesize $d_{1}$};

\draw [decorate,decoration={brace,amplitude=2.5pt,mirror,raise=4pt},yshift=0pt]
(5,3.35) -- (5,3.95) node [black,midway,xshift=0.6cm]{\footnotesize $d_{2}$};

\draw [decorate,decoration={brace,amplitude=3.1pt,mirror,raise=4pt},yshift=0pt]
(5,0.75) -- (5,1.75) node [black,midway,xshift=0.8cm]{\footnotesize $d_{\ell-1}$};

\draw [decorate,decoration={brace,amplitude=2.5pt,mirror,raise=4pt},yshift=0pt]
(5,0.05) -- (5,0.65) node [black,midway,xshift=0.6cm]{\footnotesize $d_{\ell}$};

\end{tikzpicture}
\end{center}

\vspace*{1\baselineskip}

\noindent Observe that $\mathrm{dim}_{F}J_{1}$ corresponds with the sum of
the dimensions (to be visualized as areas) of each of the $\ell -1$ blocks
that make up $J_{1}$. With this perspective we see that

\begin{equation*}
\mathrm{dim}_{F}J_{1}=\overbrace{d_{1}(n-d_{1})}^{\mathrm{1st \ block}} + 
\overbrace{d_{2}(n-d_{1}-d_{2})}^{\mathrm{2nd \ block}}+ \cdots+\overbrace{%
d_{\ell-1}(n-d_{1}-\cdots-d_{\ell-1})}^{(\ell-1)\mathrm{th \ block}}.
\end{equation*}

\noindent Note also that

\begin{equation*}
J_{1}^{\ell}=0,
\end{equation*}

\noindent from which we infer that $R_{1}$ is Lie nilpotent of index $\ell-1$%
. (This inference is explained in the discussion following the statement of
the Conjecture (7).)

Take $V=F^{n}$, which in this context is to be visualized as a $1\times n$
block thus

\vspace*{1\baselineskip}

\begin{center}
\begin{tikzpicture}
\node [left] at (0,0) {$V=\,$};
\draw [fill=gray!30] (0,-0.15) rectangle (5,0.15);
\end{tikzpicture}
\end{center}

\vspace*{1\baselineskip}

\noindent Given the above pictorial representations of $V$ and $J_{1}$, we
see that

\vspace*{1\baselineskip}

\begin{center}
\begin{tikzpicture}
\node [left] at (0,0) {$VJ_{1}\,=\,$};
\draw (0,-0.15) rectangle (1,0.15);
\draw [fill=gray!30] (1,-0.15) rectangle (5,0.15);
\draw [decorate,decoration={brace,amplitude=4pt,raise=-2pt},yshift=10pt]
(0.05,0) -- (0.95,0) node [black,above,midway,xshift=0cm]{\makebox[0.4em][l]{\footnotesize $d_{1}$\,(zero entries)}};
\end{tikzpicture}
\end{center}

\vspace*{1\baselineskip}

\noindent Choosing

\begin{center}
\begin{tikzpicture}
\node [left] at (0,0) {$U_{1}\,=\,$};
\draw [fill=gray!30] (0,-0.15) rectangle (1,0.15);
\draw (1,-0.15) rectangle (5,0.15);
\draw [decorate,decoration={brace,amplitude=4pt,raise=-2pt},yshift=10pt]
(0.05,0) -- (0.95,0) node [black,above,midway,xshift=0cm]{\makebox[0.4em][l]{\footnotesize $d_{1}$}};
\end{tikzpicture}
\end{center}

\vspace*{1\baselineskip}

\noindent we see that

\vspace*{1\baselineskip}

\begin{center}
\begin{tikzpicture}

\draw [dotted](0,5)--(5,0);

\draw {[line width=0.1pt](0,0) rectangle (5,5)};

\draw [fill=gray!30][line width=0.1pt](5,4) rectangle (1.7,3.3);
\draw [fill=gray!30][line width=0.1pt](5,1.8) rectangle (4.3,0.7);

\node [right] at (4.5,2.05) {$\cdot$};
\node [right] at (4.5,2.25) {$\cdot$};
\node [right] at (4.5,2.45) {$\cdot$};
\node [right] at (4.5,2.65) {$\cdot$};
\node [right] at (4.5,2.85) {$\cdot$};
\node [right] at (4.5,3.05) {$\cdot$};

\node [right] at (5.32,1.65) {$\cdot$};
\node [right] at (5.32,1.85) {$\cdot$};
\node [right] at (5.32,2.05) {$\cdot$};
\node [right] at (5.32,2.25) {$\cdot$};
\node [right] at (5.32,2.45) {$\cdot$};
\node [right] at (5.32,2.65) {$\cdot$};
\node [right] at (5.32,2.85) {$\cdot$};
\node [right] at (5.32,3.05) {$\cdot$};
\node [right] at (5.32,3.25) {$\cdot$};

\node [left] at (0,2.5) {$J_{2}=(0:^{R_{1}}U_{1})=\,$};

\draw [decorate,decoration={brace,amplitude=3pt,mirror,raise=4pt},yshift=0pt]
(5,4.05) -- (5,4.95) node [black,midway,xshift=0.6cm]{\makebox[0.8em][l]{\footnotesize $d_{1}$\,(zero rows)}};

\draw [decorate,decoration={brace,amplitude=2.5pt,mirror,raise=4pt},yshift=0pt]
(5,3.35) -- (5,3.95) node [black,midway,xshift=0.6cm]{\footnotesize $d_{2}$};

\draw [decorate,decoration={brace,amplitude=3.1pt,mirror,raise=4pt},yshift=0pt]
(5,0.75) -- (5,1.75) node [black,midway,xshift=0.8cm]{\footnotesize $d_{\ell-1}$};

\draw [decorate,decoration={brace,amplitude=2.5pt,mirror,raise=4pt},yshift=0pt]
(5,0.05) -- (5,0.65) node [black,midway,xshift=0.6cm]{\footnotesize $d_{\ell}$};

\end{tikzpicture}
\end{center}

\vspace*{1\baselineskip}

\noindent Here:

\begin{list}{{(\alph{standard})}\hfill}{\usecounter{standard}\addtocounter{standard}{0}\setlength{\topsep}{1ex}\setlength{\labelwidth}{1.5\parindent}\setlength{\labelsep}{0.7\parindent}\setlength{\leftmargin}{2.3\parindent}\setlength{\listparindent}{0pt}\setlength{\itemsep}{6pt}}

\item[$\triangleright$] ${\rm dim}_{F}J_{2}=\overbrace{d_{2}(n-d_{1}-d_{2})}^{2{\rm nd \ block}}+\cdots+
\overbrace{d_{\ell-1}(n-d_{1}-\cdots-d_{\ell-1})}^{(\ell-1){\rm th \ block}}$;

\item[$\triangleright$] $J^{\ell-1}_{2}=0$;

\item[$\triangleright$] $R_{2}$ is Lie nilpotent of index $\ell-2$;

\item[$\triangleright$]
\hspace*{-1em}
\raisebox{-1ex}{
\begin{tikzpicture}
\node [left] at (0,0) {$VJ_{1}J_{2}\,=\,$};
\draw (0,-0.15) rectangle (1.7,0.15);
\draw [fill=gray!30] (1.7,-0.15) rectangle (5,0.15);
\draw [decorate,decoration={brace,amplitude=4pt,raise=-2pt},yshift=10pt]
(0.05,0) -- (1.65,0) node [black,above,midway,xshift=0cm]{\footnotesize $d_{1}+d_{2}$};
\end{tikzpicture}
};

\item[$\triangleright$]
\hspace*{-1em}
\raisebox{-1ex}{
\begin{tikzpicture}
\node [left] at (0,0) {$U_{2}\,=\,$};
\draw (0,-0.15) rectangle (1,0.15);
\draw [fill=gray!30] (1,-0.15) rectangle (1.7,0.15);
\draw (1.7,-0.15) rectangle (5,0.15);
\draw [decorate,decoration={brace,amplitude=3pt,raise=-2pt},yshift=10pt]
(0.05,0) -- (0.95,0) node [black,above,midway,xshift=0cm]{\makebox[0.4em][l]{\footnotesize $d_{1}$}};
\draw [decorate,decoration={brace,amplitude=3pt,raise=-2pt},yshift=10pt]
(1.05,0) -- (1.65,0) node [black,above,midway,xshift=0cm]{\makebox[0.4em][l]{\footnotesize $d_{2}$}};
\end{tikzpicture}
}.
\end{list}

\noindent Continuing in this manner, we arrive at a smallest $F$-subalgebra
of $R$ properly containing $FI_{n}$, namely $R_{\ell-1}$, and this has
radical comprising a single block

\vspace*{1\baselineskip}

\begin{center}
\begin{tikzpicture}

\draw [dotted](0,5)--(5,0);

\draw {[line width=0.1pt](0,0) rectangle (5,5)};

\draw [fill=gray!30][line width=0.1pt](5,1.8) rectangle (4.3,0.7);


\node [right] at (5.32,1.65) {$\cdot$};
\node [right] at (5.32,1.85) {$\cdot$};
\node [right] at (5.32,2.05) {$\cdot$};
\node [right] at (5.32,2.25) {$\cdot$};
\node [right] at (5.32,2.45) {$\cdot$};
\node [right] at (5.32,2.65) {$\cdot$};
\node [right] at (5.32,2.85) {$\cdot$};
\node [right] at (5.32,3.05) {$\cdot$};
\node [right] at (5.32,3.25) {$\cdot$};

\node [left] at (0,2.5) {$J_{\ell-1}=\,$};

\draw [decorate,decoration={brace,amplitude=3pt,mirror,raise=4pt},yshift=0pt]
(5,4.05) -- (5,4.95) node [black,midway,xshift=0.6cm]{\makebox[0.8em][l]{\footnotesize $d_{1}$\,(zero rows)}};

\draw [decorate,decoration={brace,amplitude=2.5pt,mirror,raise=4pt},yshift=0pt]
(5,3.35) -- (5,3.95) node [black,midway,xshift=0.6cm]{\makebox[0.8em][l]{\footnotesize $d_{2}$\,(zero rows)}};

\draw [decorate,decoration={brace,amplitude=3.1pt,mirror,raise=4pt},yshift=0pt]
(5,0.75) -- (5,1.75) node [black,midway,xshift=0.8cm]{\footnotesize $d_{\ell-1}$};

\draw [decorate,decoration={brace,amplitude=2.5pt,mirror,raise=4pt},yshift=0pt]
(5,0.05) -- (5,0.65) node [black,midway,xshift=0.6cm]{\footnotesize $d_{\ell}$};

\end{tikzpicture}
\end{center}

\vspace*{1\baselineskip}

\noindent Here:

\begin{list}{{(\alph{standard})}\hfill}{\usecounter{standard}\addtocounter{standard}{0}\setlength{\topsep}{1ex}\setlength{\labelwidth}{1.5\parindent}\setlength{\labelsep}{0.7\parindent}\setlength{\leftmargin}{2.3\parindent}\setlength{\listparindent}{0pt}\setlength{\itemsep}{6pt}}

\item[$\triangleright$] ${\rm dim}_{F}J_{\ell-1}=\overbrace{d_{\ell-1}(n-d_{1}-\cdots-d_{\ell-1})}^{(\ell-1){\rm th \ block}}$;

\item[$\triangleright$] $J^{2}_{\ell-1}=0$;

\item[$\triangleright$] $R_{\ell-1}$ is Lie nilpotent of index $1$ and is thus commutative;

\item[$\triangleright$]
\raisebox{-1ex}{
\begin{tikzpicture}
\node [left] at (0,0) {$VJ_{1}J_{2}\ldots J_{\ell-1}\,=\,$};
\draw (0,-0.15) rectangle (4.3,0.15);
\draw [fill=gray!30] (4.3,-0.15) rectangle (5,0.15);
\draw [decorate,decoration={brace,amplitude=5pt,raise=-2pt},yshift=10pt]
(0.05,0) -- (4.25,0) node [black,above,midway,xshift=0cm]{\footnotesize $d_{1}+d_{2}+\cdots+d_{\ell-1}$};
\end{tikzpicture}
};

\item[$\triangleright$]
\raisebox{-1ex}{
\begin{tikzpicture}
\node [left] at (0,0) {$U_{\ell-1}\,=\,$};
\draw (0,-0.15) rectangle (3.2,0.15);
\draw [fill=gray!30] (3.2,-0.15) rectangle (4.3,0.15);
\draw (4.3,-0.15) rectangle (5,0.15);
\draw [decorate,decoration={brace,amplitude=3pt,raise=-2pt},yshift=10pt]
(3.25,0) -- (4.25,0) node [black,above,midway,xshift=0cm]{\footnotesize $d_{\ell-1}$};
\draw [decorate,decoration={brace,amplitude=3pt,raise=-2pt},yshift=10pt]
(4.35,0) -- (4.95,0) node [black,above,midway,xshift=0cm]{\footnotesize $d_{\ell}$};
\end{tikzpicture}
}.
\end{list}


\section{Open questions}

\begin{list}{{(\arabic{standard})}\hfill}{\usecounter{standard}\addtocounter{standard}{0}\setlength{\topsep}{1ex}\setlength{\labelwidth}{1.5\parindent}\setlength{\labelsep}{0.2\parindent}\setlength{\leftmargin}{2.3\parindent}\setlength{\listparindent}{0pt}\setlength{\itemsep}{5pt}}

\item The sequence $\{U_k\}_{k \in \N}$ of $F$-subspace complements defined in (14) is not unique. This has the consequence that the sequence
$\{R_{k}\}_{k\in\N}$ of $F$-subalgebras of $R$ is not uniquely determined by $R$. Are the $R_{k}$ unique to within isomorphism perhaps? Or failing this, are the dimensions (over $F$) of the $R_{k}$ unique?

\item Recall that if ${\mathfrak g}$ is a Lie algebra, then the {\em Derived Series} $\{{\mathfrak g}^{[m]}\}_{m\in\N}$ for ${\mathfrak g}$ is defined
recursively as follows

\begin{align*}
{\mathfrak g}^{[1]} &\define {\mathfrak g},\hspace*{0.3em}\makebox{\rm and}\\[0.5ex]
{\mathfrak g}^{[m]} &\define [{\mathfrak g}^{[m-1]},{\mathfrak g}^{[m-1]}],\hspace*{0.3em}\makebox{\rm for $m>1$.}
\end{align*}

\noindent We say ${\mathfrak g}$ is {\em solvable} if ${\mathfrak g}^{[m]}=0$ for some $m\in\N$, $m>1$, and more specifically,
{\em solvable of index $m$}, if ${\mathfrak g}^{[m+1]}=0$. We call a ring $R$ {\em Lie solvable} [resp.~{\em Lie solvable of index $m$}] if $R$, considered as a Lie algebra via the commutator, is solvable [resp.~solvable of index $m$]. If $\{{\mathfrak g}_{[m]}\}_{m\in\N}$ denotes the Lower Central Series for ${\mathfrak g}$, it is easily seen that ${\mathfrak g}^{[m]}\subseteq{\mathfrak g}_{[m]}$ for all $m\in\N$, from which it follows that every ring $R$ that is Lie nilpotent of index $m$, is also Lie solvable of index $m$. This being so, it is natural to ask whether the main theorems of this paper remain valid if the condition `Lie nilpotent of index $m$' is substituted with the weaker `Lie solvable of index $m$'.

\item Expressed in terms that make no explicit reference to the overlying matrix ring, a key result in this paper asserts that if $R$ is an $F$-algebra
with Lie nilpotence index $m$, and $V$ is any faithful right $R$-module, then ${\rm dim}_{F}R\leqslant M(m+1,{\rm dim}_{F}V)$. (This is Theorem 24 with ${\rm dim}_{F}V$ in place of $n$.) We ask whether the same inequality holds if the requirement that $R$ be a finite dimensional $F$-algebra is weakened to $R$ being merely a (two-sided) artinian ring. In such a situation, `$R$-module length' takes the place of `$F$-dimension' thus yielding the conjecture\medskip

\begin{quote}
{\em If $R$ is a (two-sided) artinian ring with Lie nilpotence index $m$ and $V$ is any faithful right $R$-module with finite composition length, then

\[
{\rm length}\,R_{R}\leqslant M(m+1,{\rm length}\,V_{R}).
\]
}
\end{quote}

\noindent In the case where $m=1$, the above reduces to the question \cite[Section 5, Open problem (a), p.~562]{Gu76} that is answered in \cite{Co93}\footnote{Although the ring $R$ is assumed to be local in Gustafson's formulation, Cowsik's proof does not assume localness.}.

\end{list}


\begin{thebibliography}{99}
\bibitem{AF92} F. W. Anderson and K. R. Fuller, \textit{Rings and Categories
of Modules}, Graduate Texts in Mathematics, Series No. 13, 2nd Edition,
Springer-Verlag, Berlin, Heidelberg, New York, 1992.

\bibitem{Co93} R. C. Cowsik, A short note on the Schur-Jacobson Theorem,
Proc. Amer. Math. Soc. \textbf{118}(2) (1993), 675-676.

\bibitem{Do98} M. Domokos, Cayley-Hamilton theorem for $2\times2$ matrices
over the Grassmann algebra, J. Pure Appl. Algebra \textbf{133} (1998), 69-81.

\bibitem{Do03} M. Domokos, On the dimension of faithful modules over finite
dimensional basic algebras, Linear Algebra Appl. \textbf{365} (2003),
155-157.

\bibitem{Dr00} V. Drensky, \textit{Free Algebras and PI-Algebras,}
Springer-Verlag, 2000.

\bibitem{DrFu04} V. Drensky and E. Formanek, \textit{Polynomial Identity
Rings,} Birkh\"{a}user-Verlag, 2004.

\bibitem{Gu76} W. H. Gustafson, On Maximal Commutative Algebras of Linear
Transformations, J. Algebra, \textbf{42} (1976), 557-563.

\bibitem{Ja44} N. Jacobson, Schur's theorems on commutative matrices, Bull.
Amer. Math. Soc. \textbf{50} (1944), 431-436.

\bibitem{Ka10} N. S. Karamzadeh, Schur's inequality for the dimension of
commuting families of matrices, Math. Inequal. Appl. \textbf{13}(3) (2010),
625-628.

\bibitem{Ke91} A. R. Kemer, \textit{Ideals of Identities of Associative
Algebras}, Translations of Math. Monographs, Vol. 87 (1991), AMS,
Providence, Rhode Island.

\bibitem{Mi98} M. Mirzakhani, A simple proof of a theorem of Schur, Amer.
Math. Monthly \textbf{105}(3) (1998), 260-262.

\bibitem{Pa91} D. S. Passman, \textit{A Course in Ring Theory}, Wadsworth \&
Brooks/Cole Advanced Books \& Software, 1991.

\bibitem{RR12} H. Radjavi and P. Rosenthal, \textit{Simultaneous
Triangularization}, Springer-Verlag, New York, Inc., 2012.

\bibitem{RW99} D. M. Riley and M. C. Wilson, Associative rings satisfying
the Engel condition, Proc. Amer. Math. Soc. \textbf{127}(4) (1999), 973-976.

\bibitem{Sc05} J. Schur, Zur Theorie der vertauschbaren Matrizen, J. Reine
Angew. Math. \textbf{130} (1905), 66-76.

\bibitem{Sz97} J. Szigeti, New determinants and the Cayley-Hamilton theorem
for matrices over Lie nilpotent rings, Proc. Amer. Math. Soc. \textbf{125}%
(8) (1997), 2245-2254.

\bibitem{SvW15} J. Szigeti and L. van Wyk, On Lie nilpotent rings and
Cohen's Theorem, Comm. Algebra \textbf{43}(11) (2015), 4783-4796.

\bibitem{ZG64} Z. Wan and G. Li, The two theorems of Schur on commutative
matrices, Acta Math. Sinica \textbf{14}, 143-150 (Chinese); translated as
Chinese Math. \textbf{5} (1964), 156-164.
\end{thebibliography}
\end{document}